\newtheoremstyle{myremark} 
    {7pt}                    	
    {7pt}                    	
    {}  	                 	
    {}                        	
    {\bf}       	      	
    {.}                          
    {.5em}                  	
    {}  				
\theoremstyle{plain}
\newtheorem{lemma}{Lemma}[section]
\newtheorem{theorem}[lemma]{Theorem}
\newtheorem{definition}[lemma]{Definition}
\newtheorem{corollary}[lemma]{Corollary}
\newtheorem{conjecture}[lemma]{Conjecture}
\newtheorem{proposition}[lemma]{Proposition}
\newtheorem*{theorem*}{Main result}
\theoremstyle{myremark}
\newtheorem{remark}[lemma]{Remark}
\newtheorem{example}[lemma]{Example}
\newtheorem*{application*}{Applications}
\newcommand{\R}{\mathbb{R}}
\newcommand{\Z}{\mathbb{Z}}
\newcommand{\nB}{\mathcal{B}}
\newcommand{\nN}{\mathcal{N}}
\newcommand{\onN}{\overline{\mathcal{N}}}
\newcommand{\nU}{\mathcal{U}}
\newcommand{\cnk}[2]{\mathcal{N}({#1},{#2})}
\newcommand{\vnk}[2]{\overline{\mathcal{N}}({#1},{#2})}
\newcommand{\cech}{\mathrm{\breve{C}ech}}
\newcommand{\cl}{\mathrm{Cl}}
\newcommand{\conn}{\mathrm{conn}}
\newcommand{\conv}{\mathrm{conv}\,}
\newcommand{\infimum}{\mathrm{inf}}
\newcommand{\lk}{\mathrm{lk}}
\newcommand{\md}{\mathrm{~mod~}}
\newcommand{\redhom}{\widetilde{H}}
\newcommand{\tbeta}{\widetilde{\beta}}
\newcommand{\tgamma}{\widetilde{\gamma}}
\newcommand{\verteq}{\rotatebox{90}{$\,=$}}
\newcommand{\vr}{\mathrm{VR}}
\renewcommand{\subset}{\subseteq}
\begin{document}
\title{Nerve complexes of circular arcs}
\subjclass[2010]{05E45, 52B15, 68R05}
\keywords{Nerve complex, \u Cech complex, Vietoris--Rips complex, Circular arc, Cyclic polytope}
\author{Micha{\l} Adamaszek}
\address{Max Planck Institut f\"{u}r Informatik, 66123 Saarbr\"{u}cken, Germany}
\email{aszek@mimuw.edu.pl}
\author{Henry Adams}
\address{Department of Mathematics, Duke University, Durham, NC 27708, United States}
\email{hadams@math.duke.edu}
\author{Florian Frick}
\address{Institut f\"{u}r Mathematik, MA 8-1, Technische Universit\"{a}t Berlin, 10623 Berlin,
Germany}
\email{frick@math.tu-berlin.de}
\author{Chris Peterson}
\address{Department of Mathematics, Colorado State University, Fort Collins, CO 80523, United States}
\email{peterson@math.colostate.edu}
\author{Corrine Previte--Johnson}
\address{Department of Mathematics, California State University, San Bernardino, CA 92407, United States}
\email{cprevite@csusb.edu}
\thanks{Research of HA was supported by the Institute for Mathematics and its Applications. FF is supported by the German Science Foundation DFG via the Berlin Mathematical
School}

\begin{abstract}
\begin{spacing}{1.1}
{\normalsize
We show that the nerve complex of $n$ arcs in the circle is homotopy equivalent to either a point, an odd-dimensional sphere, or a wedge sum of spheres of the same even dimension. Moreover this homotopy type can be computed in time $O(n\log n)$. For the particular case of the nerve complex of evenly-spaced arcs of the same length, we determine the dihedral group action on homology, and we relate the complex to a cyclic polytope with $n$ vertices. We give three applications of our knowledge of the homotopy types of nerve complexes of circular arcs. First, we use the connection to cyclic polytopes to give a novel topological proof of a known upper bound on the distance between successive roots of a homogeneous trigonometric polynomial. Second, we show that the Lov\'{a}sz bound on the chromatic number of a circular complete graph is either sharp or off by one. Third, we show that the Vietoris--Rips simplicial complex of $n$ points in the circle is homotopy equivalent to either a point, an odd-dimensional sphere, or a wedge sum of spheres of the same even dimension, and furthermore this homotopy type can be computed in time $O(n\log n)$.}
\end{spacing}
\end{abstract}

\maketitle
\setcounter{tocdepth}{1}
\tableofcontents

\section{Introduction}

For $\nU$ a collection of subsets of some topological space, the \emph{nerve simplicial complex} $\nN(\nU)$ contains a $k$-simplex for every subcollection of $k+1$ sets with nonempty intersection. The Nerve Theorem, which holds in a variety of contexts, states that if the intersection of each subcollection of $\nU$ is either empty or contractible, then the nerve complex is homotopy equivalent to the union of the subsets \cite{Borsuk1948, Bjorner1995}. A coarser representation of the incidences between sets in $\nU$ is given by the \emph{clique complex} $\onN(\nU)$, which contains a $k$-simplex for every collection of $k+1$ sets with \emph{pairwise} nonempty intersections. In this paper we study nerve complexes and clique complexes of finite collections of arcs in the circle, which are known, respectively, as \emph{ambient \u Cech complexes} and \emph{Vietoris-Rips complexes} when all arcs have the same length. We completely classify their homotopy types.

\begin{theorem*}[Theorem \ref{thm:arbitraryNerve}]
The nerve complex and the clique complex of any finite collection of arcs in the circle are homotopy equivalent to either a point, an odd-dimensional sphere, or a wedge sum of spheres of the same even dimension. 
\end{theorem*}

The higher-dimensional spheres occur when the arcs are large enough so that the intersection of two arcs need not be contractible.

We begin by studying the homotopy types and the combinatorics of the nerve complexes of evenly-spaced circular arcs. For $0 \le k < n$, let $\cnk{n}{k}$ denote the nerve complex of $n$ evenly-spaced arcs each occupying a $\frac{k}{n}$ fraction of the circumference of the circle. These are of fundamental interest since, as we shall see, the nerve of any finite configuration of arcs deformation retracts to a complex isomorphic to some $\cnk{n}{k}$. We prove a recursive relation from which we derive the homotopy types of the $\cnk{n}{k}$. We further provide explicit generators of homology and cohomology of $\cnk{n}{k}$ and describe the induced action of their automorphism groups on homology. In the generic case, when $\cnk{n}{k}$ is homotopy equivalent to an odd-dimensional sphere $S^{2l+1}$, we show that it contains the boundary complex of the $n$-vertex cyclic polytope $C_{2l+2}(n)$ as a homotopy equivalent subcomplex. 

\begin{application*}
We give two immediate applications of these calculations. 
\begin{compactenum}[1.]
	\item We show that the Lov\'{a}sz bound on the chromatic number of a circular complete graph is either sharp or off by one (see Corollary \ref{cor:lovasz}).
	\item We use the relation with cyclic polytopes to give a novel topological proof of a known upper bound on the distance between successive roots of a homogeneous trigonometric polynomial (see Theorem \ref{thm:gapsAlternate}).
\end{compactenum}
\end{application*}

In the last section we study arbitrary collections $\nU$ of $n$ arcs in $S^1$. We show that each such nerve complex $\nN(\nU)$ has an explicit homotopy-preserving combinatorial reduction to one of the form $\cnk{n'}{k}$ with $n' \le n$. We can compute the reduction in time $O(n \log n)$, and as a result we obtain an efficient algorithm for determining the homotopy type of $\nN(\nU)$, even though the worst-case size of $\nN(\nU)$ is exponential in $n$. The reductions are independent of any knowledge of the $\cnk{n}{k}$, and they also carry over to the clique complexes $\onN(\nU)$.
 
When $\nU$ is a collection of balls of fixed radius $r$ in a Riemannian manifold $M$, the clique (or flag) complex $\onN(\nU)$ is called a \emph{Vietoris--Rips complex} \cite{Vietoris27}. Such complexes arise in manifold reconstruction \cite{ChazalOudot2008, AttaliLieutierSalinas2013} and in topological data analysis \cite{EdelsbrunnerHarer, Carlsson2009}. If the radius $r$ is sufficiently small and if the balls are sufficiently dense, then Hausmann and Latschev prove the Vietoris--Rips complex is homotopy equivalent to manifold $M$ \cite{Hausmann1995, Latschev2001}. However, Vietoris--Rips complexes with larger radii parameters $r$ are not well understood, even for simple spaces such as spheres. We show that the Vietoris--Rips complex of an arbitrary subset of $n$ points in the circle with arbitrary radius parameter $r$ is homotopy equivalent to either a point, an odd-dimensional sphere, or a wedge sum of spheres of the same even dimension. Moreover this homotopy type can be computed in time $O(n\log n)$. 
We also prove a surprising relationship, different from the usual inclusion, between the \u Cech and Vietoris--Rips complexes of evenly-spaced points on the circle.

\section{Preliminaries}

We assume the reader is familiar with basic concepts in topology and combinatorial topology, and refer to Hatcher \cite{Hatcher} and Kozlov \cite{Kozlov}.

\subsection*{Simplicial complexes} Let $K$ be a simplicial complex, let $V(K)$ be its vertex set, and let $K^{(i)}$ be its $i$-skeleton. We will identify an abstract complex with its geometric realization and use the symbol $\simeq$ to denote homotopy equivalence and $\cong$ to denote isomorphism of simplicial complexes. For $V' \subseteq V(K)$, let $K[V']$ be the induced subcomplex of $K$ containing only those simplices with all vertices in $V'$. We let $K \setminus \{v\} = K[V(K) \setminus \{v\}]$ be the simplicial complex obtained from $K$ by removing all simplices containing $v$.
The link of vertex $v$ is $\lk_K(v) = \{\sigma \in K~|~v\notin \sigma\mbox{ and }\sigma \cup \{v\} \in K\}$. 

\subsection*{Domination}
We say vertex $v$ is \emph{dominated} by vertex $v'$ if each $\sigma \in K$ containing $v$ satisfies $\sigma \cup \{v'\} \in K$, i.e.\ if $\lk_K(v)$ is a cone with apex $v'$. If vertex $v \in K$ is dominated, then $K \simeq K \setminus \{v\}$ because we are removing a vertex $v$ whose link is contractible. In fact there is a deformation retraction $K\mapsto K\setminus\{v\}$ which sends $v$ to $v'$ and also  simplicially collapses $K$ to $K\setminus\{v\}$. These removals go by various names: folds, elementary strong collapses, and LC reductions \cite{BabsonKozlov2006, BarmakMinian2012, Matouvsek2008}. An analogous operation for graphs is known as dismantling. 

We say that simplicial complex $K$ is \emph{minimal} if it contains no dominated vertices.

\subsection*{Nerves and cliques}
Let $Y$ be a topological space, and let $\nU = \{U_i\}_{i\in I}$ with $\emptyset \neq U_i \subseteq Y$ be a collection of subsets. We say $\nU$ is a \emph{covering} of $Y$ if $\bigcup_{i\in I}U_i = Y$.

\begin{definition}
Given a collection of subsets $\nU=\{U_i\}_{i\in I}$ in a topological space, the \emph{nerve simplicial complex} $\nN(\nU)$ has vertex set $I$ and contains $k$-simplex $[i_0, \dots, i_k]$ if $\bigcap_{j=0}^k U_{i_j} \neq \emptyset$.
\end{definition}
The Nerve Theorem is generally attributed to Borsuk \cite{Borsuk1948}, and the version we use is due to Bj{\"o}rner {\cite[Theorem~10.6]{Bjorner1995}.

\begin{theorem}[Nerve Theorem]\label{thm:nerve}
Let $K$ be a simplicial complex and let $\nU$ be a covering by subcomplexes. If every nonempty finite intersection of complexes in $\nU$ is contractible, then $\nN(\nU) \simeq K$.
\end{theorem}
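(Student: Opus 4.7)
The plan is to compare $K$ and $\nN(\nU)$ via a common auxiliary space, namely the Mayer--Vietoris blowup (the homotopy colimit of the intersection diagram). For each simplex $\sigma=[i_0,\dots,i_k]$ of $\nN(\nU)$, write $U_\sigma = U_{i_0}\cap\dots\cap U_{i_k}$ and form
$$ B = \Big(\bigsqcup_{\sigma \in \nN(\nU)} \sigma \times U_\sigma\Big) / \sim, $$
where $\sim$ glues $(\partial_j\sigma, x)$ to its image in $\sigma' \times U_{\sigma'}$ along each face inclusion. This $B$ carries two natural projections: $p_K : B \to K$ forgetting the nerve coordinate, and $p_\nN : B \to \nN(\nU)$ forgetting the cover coordinate. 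The goal is to show that both projections are homotopy equivalences, yielding $K \simeq B \simeq \nN(\nU)$.

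For $p_\nN$, the preimage of the closed star of a simplex $\sigma \in \nN(\nU)$ deformation retracts onto $\{\sigma\}\times U_\sigma$, which is contractible by hypothesis since $U_\sigma$ is a nonempty finite intersection from $\nU$. For $p_K$, the preimage of a point $x \in K$ is the geometric realization of the abstract simplex $\{i : x \in U_i\}$, which is (possibly infinite but always) contractible. A local-to-global principle — the gluing lemma for CW pairs, or Quillen's Theorem A applied to the face poset of $\nN(\nU)$ — then upgrades "contractible preimages over every cell" to a genuine homotopy equivalence.

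The main obstacle is carrying out the local-to-global step rigorously under the stated hypotheses. One convenient route is induction on a skeletal filtration of $\nN(\nU)$: at each stage, $B$ is built from the previous stage as a pushout whose attaching pieces are homotopy equivalent to the corresponding pieces of $\nN(\nU)$ by the inductive hypothesis, and the gluing lemma then promotes this to a homotopy equivalence at the new stage. Alternatively, when $\nU$ is finite one can induct on $|\nU|$: write $K = K' \cup U_n$ with $K' = U_1 \cup \dots \cup U_{n-1}$, and apply the inductive hypothesis both to $K'$ with its inherited cover and to the cover $\{U_n \cap U_i\}_{i < n}$ of $K' \cap U_n$ (whose intersections are still contractible), then glue by a homotopy pushout. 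Either route reduces the theorem to standard pushout-gluing properties together with the contractibility of each $U_\sigma$.
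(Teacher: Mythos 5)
The paper does not prove this statement; it cites Bj\"orner's Theorem~10.6 and uses the result as a black box, so there is no proof in the paper for you to be compared against. Your Mayer--Vietoris blowup (homotopy colimit) argument is the standard proof, the one found in Hatcher's Section~4.G and in the sources Bj\"orner relies on. The overall structure is correct: form $B$, show both projections $p_{\nN}$ and $p_K$ are homotopy equivalences, and conclude $K \simeq B \simeq \nN(\nU)$. The key inputs are exactly the ones you identify: contractibility of each $U_\sigma$ for $p_{\nN}$, and the covering property (which makes each point-fiber of $p_K$ a nonempty simplex) for $p_K$.

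One point to tighten concerns your two proposed routes for the local-to-global step. The skeletal filtration is the one that goes through as written, because at each stage the comparison map is simply a restriction of $p_{\nN}$ (or of $p_K$), so the gluing lemma applies to an honest commuting map of pushout squares along cofibrations. The induction on $|\nU|$, as you sketch it, has a gap: the gluing lemma needs a \emph{commuting map} of homotopy pushout diagrams, but the inductive hypothesis only supplies abstract equivalences $\nN(\nU')\simeq K'$ and $\lk(n)\simeq K'\cap U_n$, which a priori need not be compatible with the inclusions $\lk(n)\hookrightarrow\nN(\nU')$ and $K'\cap U_n\hookrightarrow K'$. Repairing this forces you to strengthen the inductive statement to produce a natural zigzag of equivalences, which in effect rebuilds the blowup $B$; alternatively one invokes a relative form of the nerve theorem. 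Note also that this induction handles only finite $\nU$, whereas the statement (and the skeletal argument) allow infinite covers provided all nonempty finite intersections are contractible. Finally, when you invoke ``Quillen's Theorem A applied to the face poset,'' be aware that $p_{\nN}$ is not a simplicial or poset map out of $B$, so this is really shorthand for the Projection and Homotopy Lemmas for homotopy colimits rather than a literal application of Theorem~A.
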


In Sections~\ref{sec:clique_even} and \ref{sec:arbitrary} we will consider clique complexes of nerves. For $G$ a simple, loopless, undirected graph, the \emph{clique simplicial complex} $\cl(G)$ has $V(G)$ as its vertex set and a face for each clique (complete subgraph) of $G$.

\begin{definition}
Given a collection of subsets $\nU=\{U_i\}_{i\in I}$ in a topological space, the \emph{clique simplicial complex} $\onN(\nU)$ has vertex set $I$ and contains $k$-simplex $[i_0, \dots, i_k]$ if $U_{i_j} \cap U_{i_{j'}} \neq \emptyset$ for all $0 \le j,j' \le k$.
\end{definition}
We note $\onN(\nU) = \cl(\nN(\nU)^{(1)})$.

\subsection*{\u Cech and Vietoris--Rips complexes}
In the particular case when $Y$ is a metric space and $\nU$ is a collection of balls, the nerve complex is also known as a \u Cech complex, and the clique complex is also known as a Vietoris--Rips complex. For $Y$ a metric space, we denote the closed ball of radius $r \geq 0$ centered at $y \in Y$ by $B(y,r) = \{y' \in Y~|~d(y,y')\leq r\}$. Fix some $X \subseteq Y$ and let $\nU(X,r) = \{B(x,r)~|~x \in X\}$. Then $\nN(\nU(X,r))$ is isomorphic to the \emph{ambient \u Cech complex} $\cech(X,Y;r)$ with landmark set $X$ and witness set $Y$, as defined by Chazal, de~Silva, \& Oudot \cite[Section~4.2.3]{ChazalDeSilvaOudot2013}\footnote{Attali \& Lieutier \cite{AttaliLieutier2014} refer to the ambient \u Cech complex as a restricted \u Cech complex.}. The \emph{Vietoris--Rips complex} $\vr(X,r)$ is defined to be the simplicial complex on vertex set $X$ containing finite $\sigma \subset X$ as a simplex if the distance between any two points in $\sigma$ is at most $r$. If $Y$ is a geodesic space then $\vr(X,r)$ is isomorphic to $\onN(\nU(X,r/2))$.

\subsection*{Conventions regarding $S^1$}
In this paper we study the setting where $\nU$ is a finite collection of arcs in the circle $S^1$. We identify $S^1$ with $\R/\Z$, where the positive orientation on $\R$ corresponds to the clockwise orientation on $S^1$. For $x,y \in \R$ with $x \le y$ we denote by $[x,y]_{S^1}$ the \emph{closed circular arc} obtained as the image of the interval $[x,y]$ under the quotient map $\R \to \R/\Z$. Similarly, for $a,b \in S^1$ we denote by $[a,b]_{S^1}$ the closed circular arc obtained by moving from $a$ to $b$ in a clockwise fashion. Open and half-open intervals in $S^1$ are obtained by removing endpoints from closed intervals. The intersection of $k$ such arcs is either empty, contractible, or homotopy equivalent to a disjoint union of at most $k$ points; $\nU$ is known as an acyclic family \cite{ColinDeVerdiereGinotGoaoc2012}. 

We also equip the circle $S^1$ of circumference $1$ with the natural arc-length distance. Under this metric the diameter of $S^1$ is $\frac{1}{2}$. The choice of this particular metric does not influence the generality of our results.

\subsection*{Other conventions}
We denote the topological space consisting of a single point by $\ast$. For a topological space $Y$ we let $\bigvee^i Y$ denote the wedge sum of $i$ copies of $Y$, where by convention $\bigvee^0 Y = \ast$. The symbol $\Sigma$ denotes unreduced suspension.

All homology and cohomology is taken with integer coefficients.

If $\sigma$ is an oriented $d$-simplex in $K$ (an element of the standard basis of the chain group $C_d(K)$) then $\sigma^\vee$ denotes the dual $d$-cochain which assigns $1$ to $\sigma$, $-1$ to the reverse oriented $\sigma$, and $0$ to other $d$-simplices.

\section{Nerve complexes of evenly-spaced arcs}

We begin by giving a combinatorial model for nerve complexes of evenly-spaced circular arcs.

\begin{definition}
For $n \geq 1$ and $i,j \in \Z$ with $i \le j$, let the \emph{discrete circular arc} $[i, j]_n$ be the image
of the set $\{i, i+1, \dots, j\}$ under the quotient map $\Z \to \Z/n, z \mapsto z \md n$.
\end{definition}

For most of this paper we will be studying the topology and combinatorics of the following family of abstract simplicial complexes.

\begin{definition}
For $n \ge 1$ and $k \ge 0$, the \emph{nerve complex} $\cnk{n}{k}$ has vertex set $\{0, \dots, n-1\}$, and its set of maximal simplices is $\bigl\{[i, i+k]_n~|~i=0, \dots, n-1\bigr\}$.
\end{definition}
If $k \le n-2$ then $\cnk{n}{k}$ has $n$ maximal simplices given by the $n$ rotations of $[0,k]_n$, and if $k \ge n-1$ then $\cnk{n}{k}$ is the $(n-1)$-simplex.

To see the connection with evenly-spaced circular arcs, for $0 \le k < n$ consider the collection
\begin{equation}\label{eq:evenly-spaced}
\nU_{n,k} = \Bigl\{\Bigl[\frac{i}{n}, \frac{i+k}{n}\Bigr]_{S^1}~\Big|~i=0, \dots, n-1\Bigr\}
\end{equation}
of $n$ evenly-spaced arcs of length $\frac{k}{n}$. Also, let $X_n \subset S^1$ be a set of $n$ evenly-spaced points. It is an easy exercise to verify the isomorphisms of simplicial complexes
\begin{equation}
\label{eq:cech-iso}
\cech\bigl(X_n,S^1;\tfrac{k}{2n}\bigr)\cong\nN(\nU_{n,k})\cong\cnk{n}{k}.
\end{equation}
For $k$ even, the complex $\cnk{n}{k}$ can also be described as a distance-neighborhood complex of the cycle graph $C_n$, as studied by the last author \cite{Previte2014}.

The following regimes are simple.
\begin{itemize}
\item Disconnected: $\cnk{n}{0}$ is the disjoint union of $n$ points, i.e.\ $\bigvee^{n-1} S^0$.
\item Circle: For $1 \le k < n/2$ we have $\cnk{n}{k} \simeq S^1$ by the Nerve Theorem. Indeed, consider the triangulation of $S^1$ with vertices $\frac{i}{n}$ and edges $[\frac{i}{n},\frac{i+1}{n}]_{S^1}$ for $i = 0, \dots, n-1$. Since $1 \le k < n/2$, the covering $\nU_{n,k}$ of $S^1$ has all nonempty intersections contractible. We have $\cnk{n}{k} \cong \nN(\nU_{n,k})$, and Theorem~\ref{thm:nerve} gives $\nN(\nU_{n,k}) \simeq S^1$.
\item Top-dimensional sphere: $\cnk{n}{n-2}$ is the boundary of the $(n-1)$-simplex.
\item Contractible: For $k \geq n-1$ the complex $\cnk{n}{k}$ is the full $(n-1)$-simplex.
\end{itemize}

\begin{example}\label{ex:cnk63}
The nerve complex $\cnk{6}{3}$ is the nerve of the $6$ equally-spaced closed arcs of length $\frac{3}{6}=\frac12$; see Figure~\ref{fig:cnk63}. The Nerve Theorem does not apply since $[\frac{i}{6},\frac{i+3}{6}]_{S^1} \cap [\frac{i+3}{6},\frac{i}{6}]_{S^1} \simeq S^0$ is not contractible.
The complex $\cnk{6}{3}$ has six maximal $3$-simplices, and as we shall see $\cnk{6}{3} \simeq \bigvee^2 S^2$.
\begin{figure}[h]
	\begin{center}
    	\includegraphics[width=4.5in]{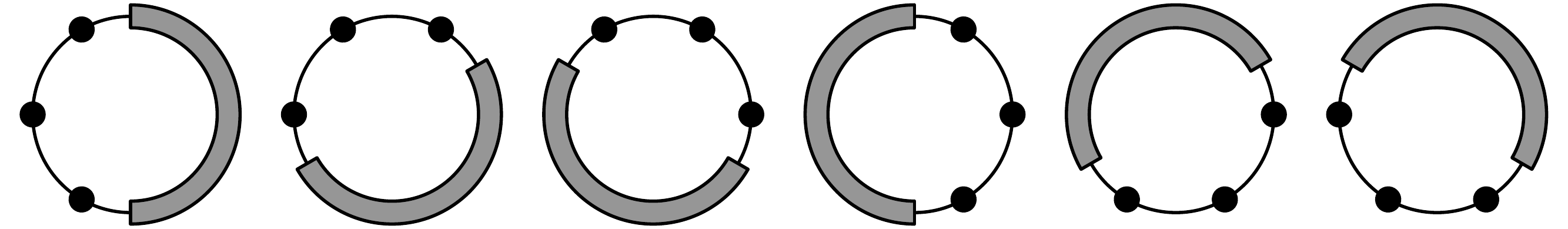}
	\includegraphics[width=4.5in]{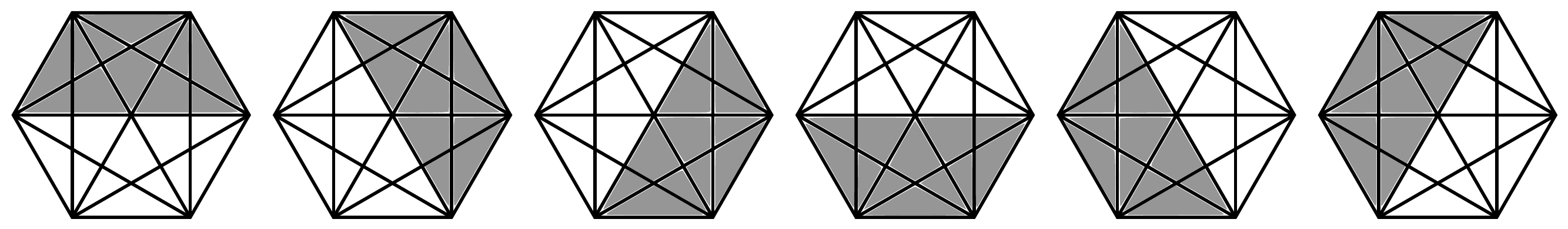}
	\end{center}
	\caption{(Top) The six arcs of $\nU_{6,3}$. (Bottom) The six maximal 3-simplices in $\cnk{6}{3}$}
	\label{fig:cnk63}
\end{figure}
\end{example}

We will now determine the homotopy types of the complexes $\cnk{n}{k}$. For this we repeatedly use the following lemma, which is a simple version of \cite[Lemma 10.4.(ii)]{Bjorner1995}.

\begin{lemma}\label{lem:union}
If the simplicial complex $K$ is the union of two contractible subcomplexes $K_1$ and $K_2$, then $K\simeq \Sigma\,(K_1\cap K_2)$.
\end{lemma}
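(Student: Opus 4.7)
The plan is to realize $K$ as a homotopy pushout and then apply homotopy invariance to collapse $K_1$ and $K_2$ to points, at which stage the pushout becomes, by definition, the unreduced suspension of $K_1\cap K_2$.

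First I would set up the pushout square. Writing $L = K_1\cap K_2$, since $K_1$ and $K_2$ are subcomplexes with $K_1\cup K_2 = K$, the inclusions $L\hookrightarrow K_i$ are simplicial, hence cofibrations, and $K$ is the pushout
\[
K \;=\; K_1\cup_L K_2.
\]
Because at least one leg is a cofibration, this pushout coincides with the homotopy pushout of the diagram $K_1 \leftarrow L \rightarrow K_2$; equivalently, $K$ is homotopy equivalent to the double mapping cylinder of the two inclusions.

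Next I would invoke the gluing lemma for cofibrations: a homotopy pushout is invariant under replacing its three entries by homotopy equivalent spaces along compatible maps. Applying this to the homotopy equivalences $K_1\simeq\ast$ and $K_2\simeq\ast$ (which exist since $K_1, K_2$ are contractible) and to the identity on $L$, one obtains a homotopy equivalence from $K$ to the homotopy pushout of $\ast\leftarrow L\rightarrow \ast$. That latter pushout is literally the unreduced suspension $\Sigma L$, proving $K\simeq\Sigma(K_1\cap K_2)$.

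The only step that needs care is the justification that the ordinary pushout of subcomplexes along a common subcomplex is a \emph{homotopy} pushout; this is standard because simplicial inclusions are NDR pairs, but if one wishes to avoid citing general machinery one can give a more hands-on argument by replacing $K_1$ and $K_2$ with the mapping cylinders of $L\hookrightarrow K_i$ (which deformation retract back onto $K_i$) and then collapsing each mapping cylinder to its cone apex using the given contractions. The main obstacle in a naïve direct approach would be that contractions of $K_1$ and $K_2$ need not agree on $L$, but this is precisely what the mapping cylinder thickening fixes. Either way, the conclusion follows, and one could instead just cite \cite[Lemma~10.4(ii)]{Bjorner1995} as the paper does.
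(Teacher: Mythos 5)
Your proof is correct, and it is worth noting that the paper does not actually supply a proof of this lemma: it is stated as ``a simple version of'' Bj\"orner's Lemma~10.4(ii) and taken as given. Your homotopy-pushout argument is the standard one. The one genuinely delicate point is the one you flag: the strict pushout of $\ast \leftarrow L \rightarrow \ast$ is a point, not $\Sigma L$, so one must pass to the \emph{homotopy} pushout (double mapping cylinder) throughout, which you do correctly, using that the inclusions $L\hookrightarrow K_i$ are cofibrations to identify $K$ with the homotopy pushout and then invoking homotopy-invariance of homotopy pushouts under levelwise equivalence. Your remark that the two contractions need not agree on $L$, and that this is exactly what mapping-cylinder thickening remedies, is precisely the pitfall a naive argument would fall into. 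For comparison, Bj\"orner's proof in the cited reference proceeds by essentially the same gluing principle; the paper simply elects to cite rather than reprove. Either citing, as the authors do, or giving your pushout argument is acceptable; yours has the advantage of being self-contained.
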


\begin{proposition}\label{prop:cnk_susp}
For $n/2\le k < n$ we have $\cnk{n}{k}\simeq \Sigma^2\,\cnk{k}{2k-n}$.
\end{proposition}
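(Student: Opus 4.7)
My plan is to apply Lemma~\ref{lem:union} twice, obtaining one suspension at each step.

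For the first suspension, let $K_1 = \overline{\mathrm{st}}(0)$ and $K_2 = \overline{\mathrm{st}}(k)$ be the closed stars of the vertices $0$ and $k$ in $\cnk{n}{k}$; each is a cone and hence contractible. I would verify that $K_1 \cup K_2 = \cnk{n}{k}$ using the hypothesis $k \ge n/2$: a maximal simplex $[i, i+k]_n$ contains $0$ iff $i \in \{n-k, n-k+1, \ldots, n-1, 0\}$, and it contains $k$ iff $i \in \{0, 1, \ldots, k\}$. Since $n - k \le k$, these two index sets together cover $\{0, 1, \ldots, n-1\}$, so every maximal simplex lies in $K_1 \cup K_2$. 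Lemma~\ref{lem:union} then gives $\cnk{n}{k} \simeq \Sigma(L)$, where $L = K_1 \cap K_2$.

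For the second suspension, I would cover $L$ with two further contractible subcomplexes $L_1, L_2$ whose intersection is homotopy equivalent to $\cnk{k}{2k-n}$. Here $\sigma \in L$ iff both $\sigma \cup \{0\}$ and $\sigma \cup \{k\}$ lie in $\cnk{n}{k}$; the $2k - n + 2$ maximal simplices of $\cnk{n}{k}$ that contain both $0$ and $k$ (indexed by $a \in \{0\} \cup \{n-k, n-k+1, \ldots, k\}$) form a natural ``core'' inside $L$, with additional ``bridging'' simplices (those $\sigma \in L$ for which $\sigma \cup \{0, k\}$ fails to be a simplex) attached. A promising decomposition partitions the double-containing max simplices between $L_1$ and $L_2$ according to the position of the index $a$, assigning each bridging simplex to both $L_1$ and $L_2$; one would then verify contractibility of each $L_i$ by exhibiting a deformation retraction onto a common spine such as the edge $\{0, k\}$ together with the free faces of the attached triangles.

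The main obstacle is identifying the intersection $L_1 \cap L_2$ precisely and showing it deformation retracts onto a copy of $\cnk{k}{2k-n}$. Since $\cnk{k}{2k-n}$ has only $k$ vertices while $L$ has $n$, this is a genuine homotopy equivalence rather than a simplicial isomorphism; one expects the $k$ vertices of $\cnk{k}{2k-n}$ to correspond to the chord-like edges $\{i, i+k\}_n$ in $L$ for $i = 0, 1, \ldots, k-1$, with the arc-interlocking combinatorics of $\cnk{n}{k}$ reproducing the adjacency structure of $\cnk{k}{2k-n}$ through the various $[a, a+k]_n$-intersections that survive in $L_1 \cap L_2$. Once this identification is verified, Lemma~\ref{lem:union} gives $L \simeq \Sigma\,\cnk{k}{2k-n}$, and composing with the first step yields $\cnk{n}{k} \simeq \Sigma^2\,\cnk{k}{2k-n}$ as claimed.
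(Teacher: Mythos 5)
Your first suspension step is a legitimate variant of the paper's: both apply Lemma~\ref{lem:union} after covering $\cnk{n}{k}$ by two cones. However, the paper partitions the maximal simplices into $K_1 = \bigcup_{i=0}^{n-k-2}\sigma_i$ (a cone on $k$) and $K_2 = \bigcup_{j=n-k-1}^{n-1}\sigma_j$ (a cone on $n-1$), whereas you use the overlapping closed stars $\overline{\mathrm{st}}(0)$ and $\overline{\mathrm{st}}(k)$. The paper's choice is noticeably cleaner at the next stage: since the $\sigma_i$ with $i \le n-k-2$ all avoid $n-1$, the intersection $K = K_1 \cap K_2$ has vertex set $\{0,\dots,n-2\}$ and admits a short explicit list of maximal simplices ($\tau$, $\tau'$, and $\tau_{i,j}$). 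Your $L = \overline{\mathrm{st}}(0)\cap\overline{\mathrm{st}}(k)$ is bulkier --- indeed for $k\ge n/2$ the $1$-skeleton of $\cnk{n}{k}$ is the complete graph, so $L$ has all $n$ vertices and picks up the ``bridging'' simplices you mention.

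The genuine gap is the second suspension step, which you do not actually carry out. You describe a ``promising decomposition'' of $L$ into $L_1,L_2$ without specifying it, assert contractibility of each $L_i$ by appeal to a spine you also do not specify, and leave the identification of $L_1\cap L_2$ with $\cnk{k}{2k-n}$ as an acknowledged open step. The speculative guess that the $k$ vertices of $\cnk{k}{2k-n}$ should correspond to the chord-like edges $\{i,i+k\}_n$ of $L$ is a red flag: in the paper's proof the second application of Lemma~\ref{lem:union} decomposes $K = \tau\cup T$, where $\tau=\{0,\dots,k-1\}$ is a single simplex and $T=\tau'\cup(\bigcup_{i,j}\tau_{i,j})$ is shown contractible by a chain of dominated-vertex removals $T_0 \simeq T_1 \simeq \cdots \simeq T_{n-k-1}=\tau'$. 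One then reads off the maximal simplices of $\tau\cap T$ and finds a simplicial isomorphism $\tau\cap T \cong \cnk{k}{2k-n}$ on the vertex set $\{0,\dots,k-1\}$ --- no homotopy equivalence between spaces of different vertex-cardinality and no vertex-to-edge correspondence is needed. Until you produce the $L_1,L_2$ decomposition, prove contractibility, and identify $L_1\cap L_2$, the argument is incomplete; and the vertex-to-edge picture you propose does not match the combinatorics that make the identification work.
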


\begin{proof}
Denote the maximal simplices of $\cnk{n}{k}$ by $\sigma_i = [i,i+k]_n$ for $i = 0, \dots, n-1$. Then we can write
\[ \cnk{n}{k} = \Big(\bigcup_{i=0}^{n-k-2}\sigma_i\Big) \cup \Big(\bigcup_{j=n-k-1}^{n-1} \sigma_j\Big), \]
where by a slight abuse of notation we write $\bigcup_{t\in T}\sigma_t$ for the subcomplex of $\cnk{n}{k}$ with maximal simplices $\{ \sigma_t~|~ t\in T \}$. Each $\sigma_j$ contains $n-1$ and each $\sigma_i$ contains $k$ since $n-k-2\leq k$, hence both unions are cones. Moreover, the simplices $\sigma_i$ do not contain $n-1$. By Lemma~\ref{lem:union} we have $\cnk{n}{k}\simeq \Sigma\,K$, where $K$ is the complex with vertex set $\{0,\dots,n-2\}$ whose maximal simplices are the inclusion-wise maximal elements in the family
\[ \{\sigma_i\cap \sigma_j~|~i = 0, \dots, n-k-2 \mbox{ and } j = n-k-1, \dots, n-1\}. \]
The intersections $\sigma_i\cap \sigma_j$ fall into three categories, see Figure~\ref{fig:maximalFaces}.
\begin{itemize}
\item[a)] If $0\leq i\leq j+k-n\leq i+k<j\leq n-1$ then $\sigma_i\cap \sigma_j=\{i,\ldots,j+k-n\}$. We have $\sigma_i\cap\sigma_j\subset\sigma_0\cap\sigma_{n-1}=\{0,\ldots,k-1\}$.
\item[b)] If $0\leq j+k-n<i\leq j\leq i+k\leq n-2$  then $\sigma_i\cap \sigma_j=\{j,\ldots,i+k\}$. We have $\sigma_i\cap\sigma_j\subset\sigma_{n-k-2}\cap\sigma_{n-k-1}=\{n-k-1,\ldots,n-2\}$.
\item[c)] If $i\leq j+k-n$ and $j\leq i+k$ then $\sigma_i\cap\sigma_j=\{i,\ldots,j+k-n\}\cup\{j,\ldots,i+k\}$. These are not contained in any other set of the form $\sigma_{i'}\cap \sigma_{j'}$.
\end{itemize}
We conclude that the maximal simplices of $K$ are
\[ \tau = \{0, \dots, k-1\}, \quad \tau' = \{n-k-1, \dots, n-2\}, \quad\mbox{and}\quad \tau_{i,j} = \{i, \dots, j+k-n\} \cup \{j, \dots, i+k\}, \]
subject to the conditions
\[ 0 \le i \le j+k-n \quad\mbox{and}\quad j \le i+k \le n-2. \]
\begin{figure}[h!]
	\begin{tabular}{ccc}
	\includegraphics[scale=0.8]{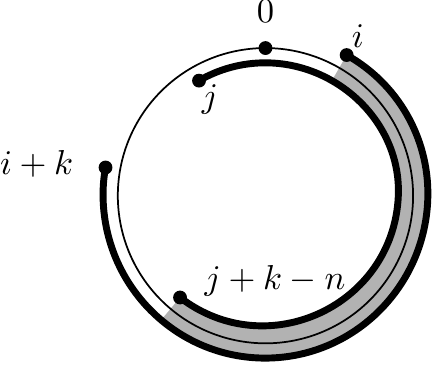} & \includegraphics[scale=0.8]{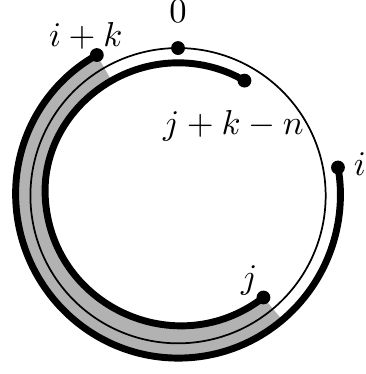} & \includegraphics[scale=0.8]{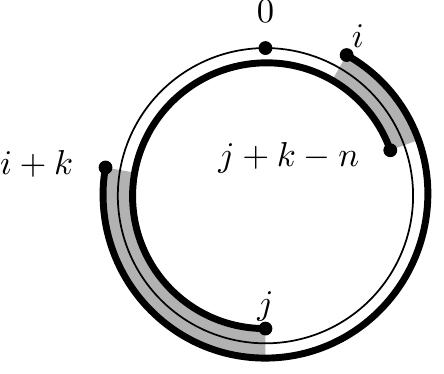}\\
	a) & b) & c)
	\end{tabular}
	\caption{\label{fig:maximalFaces}The intersections $\sigma_i\cap\sigma_j$ in $\cnk{n}{k}$.}
\end{figure}

We claim that the subcomplex $T = \tau'\cup(\bigcup_{i,j}\tau_{i,j})$ of $K$ is contractible. Let $T_l = T[\{l,\dots,n-2\}]$ for $l=0,\dots,n-k-1$. For $l \neq n-k-1$ the maximal simplices of $T_l$ containing $l$ are of the form $\tau_{l,j}$, since a maximal simplex of $T$ containing $l$ is of the form $\tau_{i,j}$ for some $i\le l\le j+k-n$ and $\tau_{i,j}\cap V(T_l)\subseteq \tau_{l,j}$. Since each $\tau_{l,j}$ contains $l+k$, vertex $l$ is dominated by $l+k$ in $T_l$, giving $T_l \simeq T_l \setminus \{l\} = T_{l+1}$. It follows that $T = T_0$ is homotopy equivalent to $T_{n-k-1}=\tau'$, which is is contractible. 

We write $K=\tau\cup T$ as the union of two contractible subcomplexes, and by Lemma~\ref{lem:union} there is a homotopy equivalence $K\simeq \Sigma\,(\tau \cap T)$. Note the vertex set of $\tau \cap T$ is $\{0,\dots,k-1\}$, and its maximal simplices are the inclusion-wise maximal elements in the family consisting of $\tau \cap \tau'$ and all $\tau\cap \tau_{i,j}$. These maximal elements are
\begin{alignat*}{2}
\tau \cap \tau' &= \{n-k-1, \dots, k-1\} \\
\tau \cap \tau_{0,j} &= \{j, \dots, k-1\} \cup \{0, \dots, j+k-n\}, \qquad && n-k \le j \le k-1\\
\tau \cap \tau_{i,i+k} &= \{i, \dots, i+2k-n\}, && 0 \le i \le n-k-2.
\end{alignat*}
These are precisely all the cyclic intervals of the form $[i,i+(2k-n)]_k$ in $\{0,\dots,k-1\}$, hence $\tau \cap T = \cnk{k}{2k-n}$. By combining the two suspension steps we obtain 
\[ \cnk{n}{k}\simeq\Sigma\,K\simeq\Sigma^2\,(\tau \cap T)=\Sigma^2\,\cnk{k}{2k-n}. \]
\end{proof}

The homotopy types of the nerve complexes $\cnk{n}{k}$ follow.

\begin{theorem}\label{thm:cnk}
Let $0 \le k \le n-2$. Then
\[ \cnk{n}{k} \simeq
\begin{cases}
\bigvee^{n-k-1}S^{2l} & \mbox{if } \frac{k}{n} = \frac{l}{l+1} \\
S^{2l+1} & \mbox{if } \frac{l}{l+1} < \frac{k}{n} < \frac{l+1}{l+2}
\end{cases}
\mbox{ for some }l \ge 0.\]
\end{theorem}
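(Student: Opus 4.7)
The plan is to induct on $n$, with the inductive step driven by Proposition \ref{prop:cnk_susp} and the base cases supplied by the ``simple regimes'' enumerated after the definition of $\cnk{n}{k}$. Specifically, when $k < n/2$, either $k = 0$ and $\cnk{n}{0} \simeq \bigvee^{n-1} S^0$ matches the wedge formula at $l = 0$ with $\frac{k}{n} = \frac{0}{1}$, or $1 \le k < n/2$ and $\cnk{n}{k} \simeq S^1$ matches the sphere formula at $l = 0$. So the entire $k < n/2$ regime is handled.

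For the inductive step, suppose $n/2 \le k \le n-2$. Proposition \ref{prop:cnk_susp} gives $\cnk{n}{k} \simeq \Sigma^2\,\cnk{k}{2k-n}$. Setting $n' = k$ and $k' = 2k - n$, we have $0 \le k' \le n' - 2$ and $n' < n$, so the inductive hypothesis applies to $\cnk{n'}{k'}$. The crucial observation is that the ratio transforms as
\[ \frac{k'}{n'} = \frac{2k-n}{k} = 2 - \frac{n}{k}, \]
while the wedge count is preserved: $n' - k' - 1 = k - (2k-n) - 1 = n - k - 1$.

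I then split into the two cases of the statement. In Case 1, $\frac{k}{n} = \frac{l}{l+1}$ forces $l \ge 1$ (since $\frac{k}{n} \ge \frac12$ bypasses the base cases), and the transformation yields $\frac{k'}{n'} = 2 - \frac{l+1}{l} = \frac{l-1}{l}$, still of the same form with parameter $l - 1$. By induction $\cnk{k}{2k-n} \simeq \bigvee^{n-k-1} S^{2(l-1)}$, and applying $\Sigma^2$ gives $\bigvee^{n-k-1} S^{2l}$. In Case 2, $\frac{l}{l+1} < \frac{k}{n} < \frac{l+1}{l+2}$ (again with $l \ge 1$) yields $\frac{l-1}{l} < \frac{k'}{n'} < \frac{l}{l+1}$ by an elementary arithmetic check, so inductively $\cnk{k}{2k-n} \simeq S^{2(l-1)+1} = S^{2l-1}$, and $\Sigma^2 S^{2l-1} \simeq S^{2l+1}$, as desired.

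There is no serious obstacle: the proof is a clean induction, and the only thing requiring care is the bookkeeping ensuring that the parameter $l$ always decreases by exactly one under the recursion $(n,k) \mapsto (k, 2k-n)$ and that the boundary case $k = n-2$ (where $\cnk{n}{n-2}$ is the boundary of the $(n-1)$-simplex, hence $S^{n-2}$) falls into Case 1 or Case 2 as dictated by the parity of $n$. This is a routine verification.
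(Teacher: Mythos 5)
Your proposal is correct and is essentially the paper's own proof: an induction with base cases $\cnk{n}{0}\cong\bigvee^{n-1}S^0$ and $\cnk{n}{k}\simeq S^1$ for $1\le k<n/2$, and inductive step via Proposition~\ref{prop:cnk_susp}, with the same bookkeeping showing the recursion $(n,k)\mapsto(k,2k-n)$ decreases $l$ by one and preserves the wedge count $n-k-1$. The paper states this more tersely but the argument is identical; your final remark about $k=n-2$ is a sanity check rather than a separate case, since it is already covered by the induction.
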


\begin{proof}
We apply Proposition~\ref{prop:cnk_susp} repeatedly to the two initial conditions $\cnk{n}{0} \cong \bigvee^{n-1}S^0$ and $\cnk{n}{k}\simeq S^1$ for $1 \le k < n/2$. For the induction step note that if $\frac{l}{l+1}< \frac{k}{n}<\frac{l+1}{l+2}$ then $\frac{l-1}{l}< \frac{2k-n}{k}<\frac{l}{l+1}$, and $\frac{k}{n}=\frac{l}{l+1}$ implies $\frac{2k-n}{k}=\frac{l-1}{l}$.
\end{proof}

\begin{figure}[h]
\begin{center}
{\small
\begin{tabular}{|>{$}c<{$}|>{$}c<{$}|>{$}c<{$}|>{$}c<{$}|>{$}c<{$}|>{$}c<{$}|>{$}c<{$}|>{$}c<{$}|>{$}c<{$}|>{$}c<{$}|>{$}c<{$}|>{$}c<{$}|>{$}c<{$}|>{$}c<{$}|>{$}c<{$}|} \hline
& k=0 & 1 & 2 & 3 & 4 & 5 & 6 & 7 & 8 & 9 & 10 & 11 & 12 \\ \hline
n=2 & \cellcolor{gray!35} S^{0} & \ast & \ast & \ast & \ast & \ast & \ast & \ast & \ast & \ast & \ast & \ast & \ast \\ \hline
3 & \cellcolor{gray!35} \vee_{2}S^{0} & S^{1} & \ast & \ast & \ast & \ast & \ast & \ast & \ast & \ast & \ast & \ast & \ast \\ \hline
4 & \cellcolor{gray!35} \vee_{3}S^{0} & S^{1} & \cellcolor{gray!35} S^{2} & \ast & \ast & \ast & \ast & \ast & \ast & \ast & \ast & \ast & \ast \\ \hline
5 & \cellcolor{gray!35} \vee_{4}S^{0} & S^{1} & S^{1} & S^{3} & \ast & \ast & \ast & \ast & \ast & \ast & \ast & \ast & \ast \\ \hline
6 & \cellcolor{gray!35} \vee_{5}S^{0} & S^{1} & S^{1} & \cellcolor{gray!35} \vee_{2}S^{2} & \cellcolor{gray!35} S^{4} & \ast & \ast & \ast & \ast & \ast & \ast & \ast & \ast \\ \hline
7 & \cellcolor{gray!35} \vee_{6}S^{0} & S^{1} & S^{1} & S^{1} & S^{3} & S^{5} & \ast & \ast & \ast & \ast & \ast & \ast & \ast \\ \hline
8 & \cellcolor{gray!35} \vee_{7}S^{0} & S^{1} & S^{1} & S^{1} & \cellcolor{gray!35} \vee_{3}S^{2} & S^{3} & \cellcolor{gray!35} S^{6} & \ast & \ast & \ast & \ast & \ast & \ast \\ \hline
9 & \cellcolor{gray!35} \vee_{8}S^{0} & S^{1} & S^{1} & S^{1} & S^{1} & S^{3} & \cellcolor{gray!35} \vee_{2}S^{4} & S^{7} & \ast & \ast & \ast & \ast & \ast \\ \hline
10 & \cellcolor{gray!35} \vee_{9}S^{0} & S^{1} & S^{1} & S^{1} & S^{1} & \cellcolor{gray!35} \vee_{4}S^{2} & S^{3} & S^{5} & \cellcolor{gray!35} S^{8} & \ast & \ast & \ast & \ast \\ \hline
11 & \cellcolor{gray!35} \vee_{10}S^{0} & S^{1} & S^{1} & S^{1} & S^{1} & S^{1} & S^{3} & S^{3} & S^{5} & S^{9} & \ast & \ast & \ast \\ \hline
12 & \cellcolor{gray!35} \vee_{11}S^{0} & S^{1} & S^{1} & S^{1} & S^{1} & S^{1} & \cellcolor{gray!35} \vee_{5}S^{2} & S^{3} & \cellcolor{gray!35} \vee_{3}S^{4} & \cellcolor{gray!35} \vee_{2}S^{6} & \cellcolor{gray!35} S^{10} & \ast & \ast \\ \hline
13 & \cellcolor{gray!35} \vee_{12}S^{0} & S^{1} & S^{1} & S^{1} & S^{1} & S^{1} & S^{1} & S^{3} & S^{3} & S^{5} & S^{7} & S^{11} & \ast \\ \hline
14 & \cellcolor{gray!35} \vee_{13}S^{0} & S^{1} & S^{1} & S^{1} & S^{1} & S^{1} & S^{1} & \cellcolor{gray!35} \vee_{6}S^{2} & S^{3} & S^{3} & S^{5} & S^{7} & \cellcolor{gray!35} S^{12} \\ \hline
15 & \cellcolor{gray!35} \vee_{14}S^{0} & S^{1} & S^{1} & S^{1} & S^{1} & S^{1} & S^{1} & S^{1} & S^{3} & S^{3} & \cellcolor{gray!35} \vee_{4}S^{4} & S^{5} & \cellcolor{gray!35} \vee_{2}S^{8} \\ \hline
16 & \cellcolor{gray!35} \vee_{15}S^{0} & S^{1} & S^{1} & S^{1} & S^{1} & S^{1} & S^{1} & S^{1} & \cellcolor{gray!35} \vee_{7}S^{2} & S^{3} & S^{3} & S^{5} & \cellcolor{gray!35} \vee_{3}S^{6} \\ \hline
17 & \cellcolor{gray!35} \vee_{16}S^{0} & S^{1} & S^{1} & S^{1} & S^{1} & S^{1} & S^{1} & S^{1} & S^{1} & S^{3} & S^{3} & S^{3} & S^{5} \\ \hline
18 & \cellcolor{gray!35} \vee_{17}S^{0} & S^{1} & S^{1} & S^{1} & S^{1} & S^{1} & S^{1} & S^{1} & S^{1} & \cellcolor{gray!35} \vee_{8}S^{2} & S^{3} & S^{3} & \cellcolor{gray!35} \vee_{5}S^{4} \\ \hline
\end{tabular}
}
\end{center}
\caption{The homotopy types of nerve complexes $\cnk{n}{k}$.}
\end{figure}

\begin{remark}
Let the boundary of a pure $d$-dimensional simplicial complex be the subcomplex induced by all $(d-1)$-faces that are contained in exactly one maximal simplex. For $k\geq 2$ and $n\geq 2k+1$ the boundary of $\cnk{n}{k}$ (denoted $M_{k-2}^{k-1}(n)$ by K{\"{u}}hnel and Lassmann \cite{KuhnelLassmann1996} and $X_n^{k-1}(\psi_0)$ by Bagchi and Datta \cite{BagchiDatta2008}) is a triangulation of the sphere product $S^{k-2} \times S^1$ if $k$ is odd or if $n$ is even and a triangulation of the analogous ``twisted sphere product" if $k$ is even and $n$ is odd. This fact is a combination of \cite[Section 5, part (c) of the Theorem]{KuhnelLassmann1996} and \cite[Lemma 3.3]{BagchiDatta2008}. It can also be obtained by extending the argument of K{\"{u}}hnel \cite{Kuhnel1986} for the case $n=2k+1$.
\end{remark}

\section{Application to the Lov\'{a}sz bound}\label{sec:Lovasz}

Let $G$ be a  simple graph with vertex set $V(G)$. The \emph{chromatic number} $\chi(G)$ is the smallest number of colors required to color the vertices of $G$ so that no two adjacent vertices have the same color. The \emph{neighborhood complex} $N(G)$ is the simplicial complex whose vertex set is $V(G)$ and whose simplices are those subsets of $V(G)$ which have a common neighbor. For $Y$ a topological space, we let $\conn(Y) \geq -1$ be the minimum $k$ such that $Y$ is $k$-connected, i.e.\ the first $k$ homotopy groups of $Y$ are trivial. The following lower bound, due to Lov\'{a}sz, is now a classical result in topological combinatorics.

\begin{theorem}[Lov\'{a}sz \cite{Lovasz1978}]\label{thm:lovasz}
For $G$ a graph we have $\chi(G) \ge \conn(N(G))+3$.
\end{theorem}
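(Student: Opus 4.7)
The plan is to follow the classical equivariant topology approach via the Borsuk-Ulam theorem. Since the neighborhood complex $N(G)$ carries no natural $\Z/2$-action, the first move is to replace it by a homotopy equivalent complex that does, so that a Borsuk-Ulam obstruction becomes available.

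Let $\mathrm{Hom}(K_2,G)$ denote the polyhedral complex whose cells are products $A_0\times A_1$ with $A_0,A_1\subseteq V(G)$ nonempty and every $a_0\in A_0$ adjacent in $G$ to every $a_1\in A_1$; swapping the two factors gives a free simplicial involution. The projection $\pi: \mathrm{Hom}(K_2,G)\to N(G)$ sending $A_0\times A_1$ to $A_1$ has, over each simplex $\sigma\in N(G)$, fiber equal to the poset of nonempty subsets of the common neighborhood of $\sigma$, which has a maximum element and is thus contractible. Quillen's fiber lemma then yields $\mathrm{Hom}(K_2,G)\simeq N(G)$, so $\conn(\mathrm{Hom}(K_2,G))=\conn(N(G))$. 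Meanwhile, a proper $m$-coloring $c\colon G\to K_m$ induces a $\Z/2$-equivariant cellular map $\mathrm{Hom}(K_2,G)\to \mathrm{Hom}(K_2,K_m)$, and a direct calculation identifies $\mathrm{Hom}(K_2,K_m)$ $\Z/2$-equivariantly with the sphere $S^{m-2}$ under the antipodal action, producing a $\Z/2$-equivariant map $\mathrm{Hom}(K_2,G)\to S^{m-2}$.

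The conclusion comes from the equivariant Borsuk-Ulam theorem: a free $\Z/2$-CW complex that is $(m-2)$-connected has $\Z/2$-index at least $m-1$, so it admits no $\Z/2$-equivariant map to $S^{m-2}$. Consequently $\mathrm{Hom}(K_2,G)$ (and hence $N(G)$) cannot be $(m-2)$-connected, giving $\conn(N(G))\le m-3=\chi(G)-3$. The main obstacle is the first step --- setting up $\mathrm{Hom}(K_2,G)$ with its polyhedral cell structure and free involution, and verifying the fiber-lemma equivalence with $N(G)$; once this bridge is in place, the Borsuk-Ulam conclusion is immediate.
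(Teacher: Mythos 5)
The paper does not prove this theorem; it is quoted as a black box and credited to Lov\'{a}sz \cite{Lovasz1978}, so there is no in-paper argument to compare against. What you propose is the modern Babson--Kozlov route via the $\Z/2$-complex $\mathrm{Hom}(K_2,G)$, which is a correct and well-established alternative to Lov\'{a}sz's original 1978 argument. Lov\'{a}sz worked with $N(G)$ itself: he exploited the order-reversing Galois map sending a simplex to its set of common neighbours to manufacture a free $\Z/2$-space homotopy equivalent to $N(G)$, and then applied a Borsuk--Ulam-type obstruction; your approach replaces that ad hoc construction with the functorial $\mathrm{Hom}$-complex, which makes the free involution and the equivariant naturality of the map induced by an $m$-colouring transparent, and the identification $\mathrm{Hom}(K_2,K_m)\cong_{\Z/2}S^{m-2}$ and the index/connectivity inequality are all standard. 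So the strategy is sound.

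One step is stated too loosely, though. Quillen's fiber lemma for a poset map $f\colon P\to Q$ requires that the preimages of principal down-sets (or up-sets), $f^{-1}(Q_{\le\sigma})$, be contractible --- not merely the actual fibers $f^{-1}(\sigma)$. Contractibility of the point-fibers alone does not imply that $|f|$ is an equivalence (a two-element chain mapping bijectively to a two-element antichain has contractible fibers but is not an equivalence). In your setting $f^{-1}(Q_{\le\sigma})=\{A_0\times A_1 : A_1\subseteq\sigma\}$, and its contractibility is true but needs a short extra argument: compose the increasing closure operator $A_0\times A_1\mapsto (A_0\cup N(\sigma))\times A_1$ with the decreasing closure operator $A_0\times A_1\mapsto N(\sigma)\times A_1$; the image of the composite is $\{N(\sigma)\times A_1 : \emptyset\ne A_1\subseteq\sigma\}$, which has the maximum element $N(\sigma)\times\sigma$ and is therefore contractible. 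With that correction (or by simply invoking Babson--Kozlov's theorem that $\mathrm{Hom}(K_2,G)\simeq N(G)$), the proof is complete.
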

We will use our understanding of the nerve complexes $\cnk{n}{k}$ to exhibit a certain natural family of graphs which attain equality in Theorem~\ref{thm:lovasz}.

For $n\ge 2d$, the \emph{circular complete graph} $K_{n/d}$ has vertex set $\{0, \dots, n-1\}$, and two vertices $i$ and $j$ are adjacent if and only if $d \le |i-j| \le n-d$. The \emph{circular chromatic number} $\chi_c(G)$ is defined as
\[ \chi_c(G) = \infimum \bigl\{n/d~\big|~\mbox{there exists a graph homomorphism }G \to K_{n/d}\bigr\}. \]
We refer to the book by Hell \& Ne\v{s}et\v{r}il \cite[Section 6.1]{HellNesetril2004} for a comprehensive theory of this invariant. One can show that $\chi_c(K_{n/d}) = n/d$ and $\chi(G)=\lceil \chi_c(G) \rceil$ for any graph $G$, see \cite[Theorem 6.3, Corollary 6.11]{HellNesetril2004}, so in particular  $\chi(K_{n/d})=\lceil n/d\rceil$.
As a corollary of Theorem~\ref{thm:cnk}, we show that $\chi(K_{n/d})$ either achieves the topological lower bound of Theorem~\ref{thm:lovasz} or is off by one.

\begin{corollary}\label{cor:lovasz}
Let $n \ge 2d$. Then
\[ \chi(K_{n/d}) = \begin{cases}
\conn(N(K_{n/d}))+3 & \mathrm{if}\ 0 \le (\frac{n}{2d}\md 1) \le \frac{1}{2} \\
\conn(N(K_{n/d}))+4 & \mathrm{if}\ \frac{1}{2} < (\frac{n}{2d}\md 1) < 1.
\end{cases} \]
\end{corollary}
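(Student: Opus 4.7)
The plan is to identify the neighborhood complex $N(K_{n/d})$ with one of the complexes $\cnk{n}{k}$ and then read off the answer from Theorem~\ref{thm:cnk} together with the equality $\chi(K_{n/d})=\lceil n/d\rceil$ from \cite{HellNesetril2004}.

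First I would unwind the adjacency condition. The inequality $d\le|i-j|\le n-d$ on $\{0,\dots,n-1\}$ is equivalent to requiring that the cyclic distance between $i$ and $j$ be at least $d$, so the open neighborhood of vertex $i$ in $K_{n/d}$ is the discrete arc $N(i)=[i+d,\,i+n-d]_n$ of $n-2d+1$ consecutive residues modulo $n$. By vertex-transitivity these $n$ neighborhoods all share the same cardinality, so no one is contained in another and they are precisely the maximal faces of $N(K_{n/d})$. This yields the isomorphism
\[ N(K_{n/d}) \cong \cnk{n}{\,n-2d\,}. \]

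Next I would apply Theorem~\ref{thm:cnk} with $k=n-2d$. Writing $\tfrac{n}{2d}=(l+1)+\epsilon$ with $l\ge 0$ an integer and $\epsilon\in[0,1)$, one has $\epsilon=(\tfrac{n}{2d}\md 1)$, and $\tfrac{k}{n}=1-\tfrac{2d}{n}$ lies in $\bigl[\tfrac{l}{l+1},\tfrac{l+1}{l+2}\bigr)$, attaining the left endpoint exactly when $\epsilon=0$. Theorem~\ref{thm:cnk} thus gives
\[ \conn\bigl(N(K_{n/d})\bigr) = \begin{cases} 2l-1 & \text{if } \epsilon=0, \\ 2l & \text{if } 0<\epsilon<1. \end{cases} \]

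Finally I would combine this with $\chi(K_{n/d})=\lceil 2l+2+2\epsilon\rceil$. A short case split on whether $\epsilon\in[0,\tfrac{1}{2}]$ or $\epsilon\in(\tfrac{1}{2},1)$ produces $\chi=2l+2$ or $2l+3$ in the first range and $\chi=2l+4$ in the second, which assembles into the claimed piecewise formula; the boundary $\epsilon=\tfrac12$ is where $n/d$ is an odd integer and $\chi=2l+3=\conn+3$, consistent with $\tfrac12$ lying in the closed interval. The substantive step is recognizing that the neighborhoods of the circular complete graph are literally the evenly-spaced discrete arcs already classified by Theorem~\ref{thm:cnk}; once that identification is in hand, the remainder is routine bookkeeping and I do not expect any real obstacle.
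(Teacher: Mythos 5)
Your proposal is correct and follows essentially the same route as the paper: the key identification $N(K_{n/d})\cong\cnk{n}{n-2d}$, then Theorem~\ref{thm:cnk} plus $\chi(K_{n/d})=\lceil n/d\rceil$. The only difference is cosmetic parameterization — you write $\tfrac{n}{2d}=(l+1)+\epsilon$ where the paper writes $n=2dq+r$ with $q=l+1$ and $r=2d\epsilon$, but the case analysis and conclusions are the same.
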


\begin{proof}
We have an isomorphism of simplicial complexes $N(K_{n/d}) \cong \cnk{n}{n-2d}$. There are two cases. If $n = 2dq$ then $\frac{n-2d}{n}=\frac{q-1}{q}$ and $\cnk{n}{n-2d} \simeq \bigvee^{2d-1}S^{2(q-1)}$ by Theorem~\ref{thm:cnk}. We have
\[ \chi(K_{n/d}) = \lceil n/d \rceil = 2q = \conn(N(K_{n/d}))+3. \]
If $n = 2dq+r$ with $0 < r < 2d$ then $\frac{q-1}{q}<\frac{n-2d}{n}<\frac{q}{q+1}$ and $\cnk{n}{n-2d} \simeq S^{2q-1}$ by Theorem~\ref{thm:cnk}. We have
\[ \chi(K_{n/d}) = \lceil n/d \rceil = 2q+\lceil r/d \rceil = \begin{cases}
2q+1 = \conn(N(K_{n/d}))+3 & \mbox{if }0 < r \le d \\
2q+2 = \conn(N(K_{n/d}))+4 & \mbox{if }d < r < 2d.
\end{cases} \]
\end{proof}

\section{The odd-dimensional spheres: cyclic polytopes and trigonometric polynomials}
\label{sect:cyclic-polytopes}

Our computation of the homotopy types of the nerve complexes $\cnk{n}{k}$ is based on Proposition~\ref{prop:cnk_susp}, which does not give much insight into the geometry of these complexes and does not lend itself to the study of the natural action of the dihedral group. Given that the resulting homotopy types $S^{2l+1}$ and $\bigvee^{\cdots} S^{2l}$ are quite simple it seems natural to ask if they are generated by explicit maps from spheres, or even by embedded spheres. In this section we answer this question in the positive for the odd-dimensional case, that is when $\cnk{n}{k}\simeq S^{2l+1}$.

To this end we relate the nerve complexes $\cnk{n}{k}$ to the cyclic polytopes. Consider first the case $1 \le k < \frac{n}{2}$ and project $\cnk{n}{k}$ to $\R^2$ by mapping vertices in cyclic order to those of a regular $n$-gon on the unit circle, and by extending linearly to simplices. Since the maximal simplices of $\cnk{n}{k}$ consist of vertices contained in less than half the circle, the image of the projection does not contain the origin in $\R^2$. Thus the inclusion of the bounding $n$-gon into $\R^2 \setminus \{0\}$ is a homotopy equivalence that factors through $\cnk{n}{k}$. It follows that the circle $[0,1], [1,2], \dots, [n-1,0]$ is homotopically non-trivial in $\cnk{n}{k}$. Since for $1 \le k < \frac{n}{2}$ we know $\cnk{n}{k}\simeq S^1$, we can conclude that $[0,1], [1,2], \dots, [n-1,0]$ is a homotopy equivalent subcomplex.

In this section we generalize this reasoning to other homotopy types: if $\cnk{n}{k} \simeq S^{2l+1}$ then we project to $\R^{2l+2}$. The unit circle in $\R^2$ will be generalized by the trigonometric moment curve, and the regular $n$-gons will be generalized by cyclic polytopes.

We begin by introducing cyclic polytopes; we refer the reader to Ziegler \cite{Ziegler} for the basics of polytope theory. 

\begin{definition}
The \emph{moment curve} $\gamma \colon \R\to\R^d$ is given by $\gamma(t) = (t, t^2, \dots, t^d)$. For $n>d$ the \emph{cyclic polytope} $C_d(n)$ is defined as the convex hull $C_d(n) = \conv\{\gamma(0), \gamma(1), \dots, \gamma(n-1)\}$.
\end{definition}
Gale gave a combinatorial description of the maximal simplices of cyclic polytopes.

\begin{theorem}[Gale's evenness condition, Gale \cite{Gale1963}]
Let $n > d \ge 2$. The cyclic polytope $C_d(n)$ is simplicial, and a subset $\sigma \subseteq \{0, \dots, n-1\}$ of size $d$ is a maximal simplex of $\partial C_d(n)$ if and only if for any $x, y \in \{0, \dots, n-1\}\setminus \sigma$ with $x<y$, the cardinality of $[x,y] \cap \sigma$ is even.
\end{theorem}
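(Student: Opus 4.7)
The plan is to establish both simpliciality of $C_d(n)$ and the evenness criterion by working with a single well-chosen affine functional on $\R^d$ and translating geometric incidence into the sign pattern of a univariate polynomial of degree at most $d$.

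First I would verify that $C_d(n)$ is simplicial. Any $d+1$ distinct points on the moment curve $\gamma(t) = (t, t^2, \dots, t^d)$ are affinely independent, because the matrix whose rows are $(1, \gamma(t_i))$ is Vandermonde with nonzero determinant $\prod_{i<j}(t_j - t_i)$. Hence no $d+1$ of the vertices $\gamma(0), \dots, \gamma(n-1)$ are contained in a common hyperplane, so every facet of $C_d(n)$ contains exactly $d$ vertices.

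Next, fix a $d$-subset $\sigma = \{i_1 < \cdots < i_d\} \subseteq \{0, \dots, n-1\}$. The affine hyperplane $H$ through $\gamma(i_1), \dots, \gamma(i_d)$ is the zero set of some affine function $L \colon \R^d \to \R$, so $\sigma$ spans a facet of $\partial C_d(n)$ if and only if $L(\gamma(i))$ has constant sign over all $i \in \{0,\dots,n-1\} \setminus \sigma$. Consider the polynomial
\[
p(t) = L(\gamma(t)),
\]
which has degree at most $d$ and vanishes at the $d$ distinct values $i_1, \dots, i_d$. Consequently $p(t) = \alpha \prod_{j=1}^{d}(t - i_j)$ for some nonzero $\alpha \in \R$ (nonzero because $L$ is not identically zero on the image of $\gamma$, by the Vandermonde argument above).

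Finally I would translate the sign condition on $p$ into the combinatorial statement. For $x < y$ with $x, y \notin \sigma$, the ratio
\[
\frac{p(x)}{p(y)} = \prod_{j=1}^{d}\frac{x - i_j}{y - i_j}
\]
has a negative factor precisely for each $i_j$ satisfying $x < i_j < y$, i.e.\ for each $i_j \in [x,y] \cap \sigma$. Therefore $p(x)$ and $p(y)$ share a sign if and only if $|[x,y]\cap \sigma|$ is even. Since $\sigma$ bounds a facet exactly when $p$ has constant sign on $\{0,\dots,n-1\}\setminus \sigma$, this proves the evenness condition. The main subtlety to handle carefully is justifying that $\alpha \neq 0$ (so $p$ has no additional roots beyond those forced by $\sigma$) and observing that the case when $\sigma \cup \{i\}$ lies in a common hyperplane is excluded by the simpliciality argument; aside from that, the proof is essentially the polynomial sign-counting bookkeeping above.
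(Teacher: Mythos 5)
The paper does not actually prove Gale's evenness condition; it is quoted with a citation to Gale's original 1963 paper, so there is no in-paper proof to compare against. Your argument is correct and is essentially the standard textbook proof (cf.\ Ziegler, \emph{Lectures on Polytopes}, Theorem 0.7): the Vandermonde determinant gives affine independence of any $d+1$ vertices (hence simpliciality), $\sigma$ bounds a facet exactly when the degree-at-most-$d$ polynomial $p = L\circ\gamma$ has constant sign on the remaining vertices, and the factorization $p(t) = \alpha\prod_{j}(t-i_j)$ converts that sign condition into the parity count of $[x,y]\cap\sigma$. The one spot where the wording is slightly loose is the justification that $\alpha\neq 0$: invoking ``the Vandermorde argument above'' does work (if $p\equiv 0$ then all $n>d$ vertices lie on the hyperplane, contradicting affine independence of any $d+1$ of them), but it would be cleaner to say directly that $L$ has a nonzero linear part since it defines a hyperplane, so $p$ is a nonzero polynomial of degree at most $d$; having the $d$ distinct roots $i_1,\dots,i_d$ then forces $\deg p = d$ and $\alpha\neq 0$. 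You flagged exactly this subtlety yourself, so the proof is sound.
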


Note that if $d$ is even then Gale's condition can be reformulated as follows: $\sigma$ is a maximal simplex of $C_d(n)$ if and only if $\sigma$ is a disjoint union of $d/2$ sets of the form $[i,i+1]_n$. Gale furthermore remarked that in this case the cyclic polytope is combinatorially equivalent to the convex hull of points on the trigonometric moment curve.

\begin{definition}
The \emph{trigonometric moment curve} $\tgamma_{2d} \colon \R\to\R^{2d}$ is given by
\[ \tgamma_{2d}(t) = (\cos(2\pi\cdot t), \sin(2\pi\cdot t), \cos(2\pi\cdot 2t), \sin(2\pi\cdot 2t), \dots, \cos(2\pi\cdot dt), \sin(2\pi\cdot dt)). \]
\end{definition}

\begin{proposition}[Gale \cite{Gale1963}]
For $n \ge 2d+1$, the cyclic polytope $C_{2d}(n)$ is combinatorially equivalent to $\conv\{\tgamma_{2d}(0), \tgamma_{2d}(\frac{1}{n}), \dots, \tgamma_{2d}(\frac{n-1}{n})\}$.
\end{proposition}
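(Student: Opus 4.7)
The plan is to transfer Gale's evenness condition from the polynomial moment curve to the trigonometric one, via the common fact that both curves have ``order'' equal to the ambient dimension.

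First, I would show that $\tgamma_{2d}$ is a curve of order $2d$ in $\R^{2d}$: any affine-linear functional, restricted to $\tgamma_{2d}(t)$, yields a real trigonometric polynomial of degree at most $d$, and such a polynomial has at most $2d$ zeros in a fundamental period. This latter claim is classical: multiplying by $e^{2\pi i d t}$ turns the trigonometric polynomial into an algebraic polynomial of degree $\le 2d$ whose roots on the unit circle are exactly the zeros in question. Consequently, no $2d+1$ of the points $P_i := \tgamma_{2d}(\tfrac{i}{n})$ can lie on a common affine hyperplane; so the $n$ points are in general position, and (since a non-extreme $P_i$ would lie on a supporting hyperplane through $\ge 2d$ other $P_j$, contradicting order~$2d$) each $P_i$ is a vertex of the polytope $P := \conv\{P_0, \dots, P_{n-1}\}$.

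Next, I would characterize the facets of $P$. A $2d$-subset $\sigma \subset \{0, \dots, n-1\}$ indexes a facet iff the unique affine hyperplane $H_\sigma$ spanned by $\{P_i : i \in \sigma\}$ is supporting, i.e.\ all $P_j$ with $j \notin \sigma$ lie on a common side. A defining linear equation for $H_\sigma$, pulled back along $\tgamma_{2d}$, gives a nonzero degree-$\le d$ trigonometric polynomial $f_\sigma$ on $\R/\Z$. Its $2d$ zeros are forced to be exactly $\{\tfrac{i}{n} : i \in \sigma\}$, so they are simple and $f_\sigma$ changes sign across each. Hence $\sgn f_\sigma$ is constant on each open arc of $S^1 \setminus \{\tfrac{i}{n} : i \in \sigma\}$ and alternates from arc to arc. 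It follows that all $P_j$ with $j \notin \sigma$ lie on the same side of $H_\sigma$ exactly when every pair of non-$\sigma$ points is separated, around the circle, by an even number of $\sigma$-points. Because $|\sigma| = 2d$ is even, an even count on one circular arc from $x$ to $y$ forces an even count on the other; this cyclic condition is equivalent to Gale's linear evenness condition applied to any labeling of the vertices as $0, 1, \dots, n-1$.

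Since Gale's theorem, already stated, gives exactly the same characterization of facets of the standard cyclic polytope $C_{2d}(n)$, the two polytopes have identical facet sets, hence identical face lattices, and are combinatorially equivalent. The step I expect to require the most care is the sign-alternation analysis — in particular, justifying that $f_\sigma$ has only simple zeros so that sign changes really do occur at each $\tfrac{i}{n}$, $i \in \sigma$ — while the bound on trigonometric roots and the general-position/vertex argument are standard.
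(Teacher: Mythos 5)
The paper provides no proof of this proposition; it cites Gale's 1963 paper directly and moves on. Your proposal reconstructs the standard argument, and the reconstruction is essentially correct: the key observation is the order-$2d$ property of the trigonometric moment curve (every affine functional pulls back to a trigonometric polynomial of degree at most $d$, hence at most $2d$ zeros per period, counted with multiplicity), and the facet characterization by sign-alternation of $f_\sigma$ is exactly the right mechanism. Your worry about simple zeros is handled cleanly by multiplicity counting: $f_\sigma$ has at most $2d$ zeros with multiplicity but already vanishes at $2d$ distinct points, so each is simple and there are no others, whence a genuine sign change at each. The passage from the cyclic evenness condition to Gale's linear condition is also fine, though it merits one more sentence: the cyclic condition specialized to $x<y$ is literally Gale's linear condition, and conversely the linear condition gives the cyclic one because the complement of an even count out of $|\sigma|=2d$ is even.

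One step is shakier than you acknowledge. The parenthetical argument that each $P_i$ is a vertex, namely that a non-extreme $P_i$ ``would lie on a supporting hyperplane through $\geq 2d$ other $P_j$,'' is not immediate: if $P_i$ is in the relative interior of a low-dimensional proper face $F$, a supporting hyperplane with $H\cap P=F$ only contains the vertices of $F$, and nothing forces there to be $2d$ of them. The cleaner route is to prove the facet characterization first (which is what your main argument does) and then read off that every index $i$ belongs to some Gale-even $2d$-subset when $n\ge 2d+1$, so each $P_i$ is a vertex of some facet and hence a vertex of $P$. Since the facet characterization is what drives the combinatorial equivalence anyway, this is a reordering rather than a new idea, but as written the vertex claim is not fully justified.
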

We will restrict attention to this representation of $C_{2d}(n)$ along the trigonometric moment curve.

To relate the cyclic polytopes to our $\cnk{n}{k}$, we need to understand for which intervals $I \subset \R$ the convex hull $\conv\tgamma_{2d}(I)$ avoids the origin. This is equivalent to the existence of a separating hyperplane, i.e.\ a linear subspace $H \subseteq \R^{2d}$ of codimension one that does not intersect $\tgamma_{2d}(I)$. Such a hyperplane $H$ is determined by a normal vector $z \in \R^{2d}$ with $\langle z, \tgamma_{2d}(t) \rangle > 0$ for all $t \in I$, where $\langle \cdot, \cdot \rangle$ denotes the Euclidean inner product. 
\begin{definition}
The \emph{homogeneous trigonometric polynomial} $p_z$ of degree $d$ with coefficient vector $z \in \R^{2d}\setminus \{\vec{0}\}$ is given by
\[ p_z(t) = \langle z, \tgamma_{2d}(t) \rangle = \sum_{j=1}^d z_{2j-1} \cos(2\pi jt) + z_{2j} \sin(2\pi jt).\]
\end{definition}

\begin{theorem}[Gilbert and Smyth {\cite[Corollary 1]{GilbertSmyth2000}}]\label{thm:gaps}
For any $\theta < \frac{d}{d+1}$ there is a homogeneous trigonometric polynomial of degree $d$ that is positive on $[0, \theta]$. Moreover, no homogeneous trigonometric polynomial of degree $d$ is positive on $[0, \frac{d}{d+1}]$.
\end{theorem}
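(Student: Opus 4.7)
The plan is to reformulate both parts via the correspondence $z \leftrightarrow p_z$: the homogeneous trigonometric polynomial $p_z$ is strictly positive on a compact $I \subseteq S^1$ precisely when the linear hyperplane through the origin with normal $z$ strictly separates $\vec{0}$ from $\tgamma_{2d}(I)$, and by the separation theorem this is equivalent to $\vec{0} \notin \conv(\tgamma_{2d}(I))$. For the upper bound, the identity $\sum_{k=0}^{d} e^{2\pi i j k/(d+1)} = 0$ for $j = 1, \dots, d$ (a geometric sum of nontrivial $(d+1)$-st roots of unity) yields, after taking real and imaginary parts, $(d+1)^{-1}\sum_{k=0}^{d}\tgamma_{2d}(k/(d+1)) = \vec{0}$. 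This exhibits $\vec{0}$ as a convex combination of $d+1$ points lying in $\tgamma_{2d}([0,d/(d+1)])$, so no $p_z$ can be strictly positive on the full set, proving the second assertion.

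For the existence direction, fix $\theta < d/(d+1)$ and suppose for contradiction that $\vec{0} = \sum_{i=0}^{m} \lambda_i \tgamma_{2d}(t_i)$ with $\lambda_i > 0$, $\sum_i \lambda_i = 1$, $0 \le t_0 < \cdots < t_m \le \theta$, and $m \le 2d$ (Carath\'{e}odory in $\R^{2d}$). Setting $\zeta_i = e^{2\pi i t_i}$, this reads $\sum_i \lambda_i \zeta_i^j = 0$ for $j = 1, \dots, d$, so the probability measure $\mu = \sum_i \lambda_i \delta_{t_i}$ on $S^1$ has vanishing Fourier coefficients $\hat\mu(\pm 1), \dots, \hat\mu(\pm d)$. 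I would then invoke the classical rigidity statement: any such measure has support of diameter at least $d/(d+1)$, with equality attained only by the uniform measure on an equispaced set $\{t_0 + k/(d+1) : k = 0, \dots, d\}$. Since $\operatorname{diam}(\operatorname{supp}\mu) \le \theta < d/(d+1)$, this is a contradiction.

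I would prove the rigidity step in three pieces: (a) a Vandermonde rank count forces $|\operatorname{supp}\mu| \ge d+1$; (b) in the tight case $|\operatorname{supp}\mu| = d+1$, Cramer's rule expresses the unique (up to scale) kernel vector as $\lambda_i = (-1)^i \bigl(\prod_{k\ne i}\zeta_k\bigr)\prod_{k<l,\,k,l\ne i}(\zeta_l - \zeta_k)$, and expanding $\zeta_l - \zeta_k = 2i\sin((\phi_l-\phi_k)/2)\,e^{i(\phi_l+\phi_k)/2}$ with $\zeta_k = e^{i\phi_k}$ collects all phases into a common constant times $(-1)^i e^{-i(d+1)\phi_i/2}$ and a positive sine-product; requiring this $i$-dependent factor to be a positive real multiple of the same constant for every $i$ forces $\phi_i \equiv \phi_0 + 2i\pi/(d+1) \pmod{4\pi/(d+1)}$, whose unique solution with ordered $\phi_i$ and spread $\le 2\pi\theta < 2\pi d/(d+1)$ is $\phi_i = \phi_0 + 2i\pi/(d+1)$; (c) for $|\operatorname{supp}\mu| > d+1$, slide a weight to zero along the (then at least one-dimensional) kernel to reduce support size without leaving $[0,\theta]$, iterating down to the $d+1$-point case. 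The principal obstacle is the phase bookkeeping in (b), namely checking that equispaced support is the only configuration making every Cramer cofactor a positive real multiple of the same constant; the reduction in (c) also merits care, since one must verify that positivity is preserved along the sliding direction until exactly $d+1$ support points remain.
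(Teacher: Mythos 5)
Your argument for the ``moreover'' direction is correct and pleasantly elementary: the roots-of-unity cancellation $\sum_{k=0}^{d}\tgamma_{2d}(k/(d+1))=\vec{0}$ exhibits the origin as a convex combination of $d+1$ points of $\tgamma_{2d}\bigl([0,d/(d+1)]\bigr)$, which blocks any separating hyperplane. This is a genuinely different and shorter route than the one the paper gives for this half of the statement (the paper cites \cite{GilbertSmyth2000} for Theorem~\ref{thm:gaps} and then reproves only this upper bound topologically in Theorem~\ref{thm:gapsAlternate}, using the homotopy types of $\cnk{n}{k}$ and the factorization of $\partial C_{2d}(n)\hookrightarrow C_{2d}(n)\setminus\{\vec{0}\}$ through $\cnk{n}{k}$).

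The existence direction, however, has a real gap in step (c). You want to shrink a putative blocking measure $\mu=\sum_{i=0}^{m}\lambda_i\delta_{t_i}$ with $m+1>d+1$ support points down to $d+1$ points by ``sliding along the kernel,'' but that kernel is degenerate. Write $A$ for the $2d\times(m+1)$ matrix recording $\lambda\mapsto(\hat\mu(\pm1),\dots,\hat\mu(\pm d))$, and $A'$ for $A$ augmented by the row $\hat\mu(0)$. Rescaling column $i$ of $A'$ by $\zeta_i^{d}$ turns $A'$ into the ordinary Vandermonde $(\zeta_i^{\,j'})_{j'=0,\dots,2d;\;i=0,\dots,m}$, which has full column rank $m+1$ whenever $m+1\le 2d+1$ --- and Carath\'eodory gives exactly this bound. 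Hence $\ker A'=\{0\}$, so $\operatorname{rank}A\ge m$; since $\lambda\in\ker A$ is nonzero, $\operatorname{rank}A=m$ exactly and $\ker A=\operatorname{span}(\lambda)$ is one-dimensional. Sliding along $\operatorname{span}(\lambda)$ only rescales all weights simultaneously and never drives a single $\lambda_i$ to zero, so no support reduction is available, and step (c) cannot produce the $(d+1)$-point configuration that step (b) analyzes. To close the gap you would have to carry out the sign analysis of the (unique up to scale) kernel vector directly for every intermediate support size $d+2\le m+1\le 2d+1$, which is a genuinely harder cofactor/sign bookkeeping than the square Cramer case you handle in (b) --- or abandon the reduction and argue via a positivity argument (e.g.\ pairing $\mu$ against a suitable nonnegative kernel) that treats all support sizes at once.
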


\begin{corollary}\label{cor:separating-hyperplane}
Let $I = [x,y] \subseteq \R$ be any closed interval of length less than $\frac{d}{d+1}$. Then there is a hyperplane $H \subseteq \R^{2d}$ such that $\tgamma_{2d}(I)$ is strictly on one side of $H$.
\end{corollary}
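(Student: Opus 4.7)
The plan is a direct application of Theorem \ref{thm:gaps} after a rotational reduction. A hyperplane strictly separating $\tgamma_{2d}(I)$ from the origin is the same as a nonzero $z \in \R^{2d}$ with $\langle z, \tgamma_{2d}(t)\rangle > 0$ for all $t \in I$, i.e.\ a homogeneous trigonometric polynomial $p_z$ of degree $d$ that is strictly positive on $I$. So the task reduces to producing such a $p_z$ for an arbitrary closed interval $I = [x,y]$ of length $\theta := y - x < \frac{d}{d+1}$.

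First I would reduce to the base case $x = 0$. The key point is that the class of homogeneous trigonometric polynomials of degree $d$ is invariant under translation of the argument: using the angle-sum identities
\[ \cos(2\pi j(t+x)) = \cos(2\pi jx)\cos(2\pi jt) - \sin(2\pi jx)\sin(2\pi jt), \]
and the analogue for $\sin(2\pi j(t+x))$, any $p_z(t+x)$ can be rewritten as $p_{z'}(t)$ for an explicit $z' \in \R^{2d}\setminus\{\vec 0\}$ obtained by applying a block-diagonal rotation to $z$ (the $j$-th $2\times 2$ block being rotation by $2\pi jx$). Thus if we find $p_{z'}$ strictly positive on $[0,\theta]$, then $p_z(t) = p_{z'}(t-x)$ is strictly positive on $[x, x+\theta] = I$.

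Having reduced to $I = [0,\theta]$ with $\theta < \tfrac{d}{d+1}$, the first statement of Theorem \ref{thm:gaps} directly supplies a homogeneous trigonometric polynomial $p_z$ of degree $d$ that is positive on $[0,\theta]$. Taking the hyperplane $H = \{w \in \R^{2d} \mid \langle z, w\rangle = 0\}$ then gives $\langle z, \tgamma_{2d}(t)\rangle = p_z(t) > 0$ for every $t \in I$, so $\tgamma_{2d}(I)$ lies strictly in the open half-space $\{w \mid \langle z, w\rangle > 0\}$, completing the proof.

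There is no real obstacle here beyond bookkeeping — the content is entirely in Theorem \ref{thm:gaps}, and the only thing to check is that the space of homogeneous trigonometric polynomials is translation-invariant, which is immediate from the angle-sum formulas.
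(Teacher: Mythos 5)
Your proposal is correct and matches the paper's own proof essentially line by line: both invoke the first part of Theorem~\ref{thm:gaps} to get a polynomial positive on $[0,\theta]$ and then use shift-invariance of the space of homogeneous trigonometric polynomials of degree $d$ (realized by a block-diagonal rotation of the coefficient vector) to translate it to $[x,y]$. The only difference is that you spell out the angle-sum identities explicitly, which the paper relegates to a remark after the proof.
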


\begin{proof}
Let $\theta = y-x < \frac{d}{d+1}$ and let $p_z$ be a homogeneous trigonometric polynomial of degree $d$ that is positive on $[0, \theta]$. The function $t \mapsto p_z(t-x)$ is positive on $[x,y]$ and can be expressed as a homogeneous trigonometric polynomial of degree $d$ since the space of these functions is shift invariant. Let $\overline{z} \in \R^{2d} \setminus \{0\}$ with $p_{\overline{z}}(t) = p_z(t-x)$, and now define $H = \overline{z}^\perp$.
\end{proof}

The coefficient vector $\overline{z}$ in the proof above can be explicitly computed from $z$ by applying a simple rotation $R \colon \R^{2d} \to \R^{2d}$ to $z$ that maps $\tgamma_{2d}([0, \theta])$ to $\tgamma_{2d}([x,y])$.

The following lemma and theorem relate the cyclic polytopes to the nerve complexes $\cnk{n}{k}$.

\begin{lemma}\label{lem:polytope_inclusion}
Suppose $\frac{l}{l+1} < \frac{k}{n}$ and $n \ge 2l+3$. Then the inclusion $\partial C_{2l+2}(n) \subseteq \cnk{n}{k}$ holds for the natural ordering of vertices.
\end{lemma}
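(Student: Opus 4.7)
My plan is to check directly that every maximal simplex of $\partial C_{2l+2}(n)$ is contained in some maximal simplex of $\cnk{n}{k}$. By the reformulation of Gale's evenness condition for even dimension $d = 2l+2$, every maximal simplex $\sigma$ of $\partial C_{2l+2}(n)$ is a disjoint union $\sigma = \bigsqcup_{m=0}^{l} [a_m, a_m+1]_n$ of $l+1$ cyclic pairs of consecutive vertices. Since the maximal simplices of $\cnk{n}{k}$ are exactly the arcs $[i,i+k]_n$, it suffices to show that the $2l+2$ vertices of $\sigma$ fit within some arc of $k+1$ consecutive vertices in $\Z/n$.

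To that end, order the pairs cyclically as $P_0, \dots, P_l$ and let $g_m \geq 0$ be the number of unused vertices strictly between $P_m$ and $P_{m+1}$ (indices mod $l+1$). Since the pairs and the gaps partition $\Z/n$, we have $\sum_{m=0}^{l} g_m = n - 2(l+1)$. The vertices of $\sigma$ are contained in the arc obtained by skipping the largest gap $g_{m^*} = \max_m g_m$; this arc contains $n - g_{m^*}$ vertices, so it has arc-length $n - g_{m^*} - 1$. Thus $\sigma$ is a face of $\cnk{n}{k}$ as soon as $g_{m^*} \geq n - k - 1$.

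By pigeonhole, $g_{m^*} \geq \lceil (n - 2(l+1))/(l+1) \rceil$. The final step is an arithmetic check that the hypothesis $\frac{l}{l+1} < \frac{k}{n}$, i.e.\ $(l+1)k > ln$, forces this lower bound to be at least $n - k - 1$. A direct manipulation shows that the strict inequality $(n - 2(l+1))/(l+1) > n - k - 2$ rearranges exactly to $(l+1)k > ln$, and taking ceilings yields $g_{m^*} \geq n - k - 1$ as desired. The degenerate case $k \geq n-1$ is trivial because $\cnk{n}{k}$ is then the full $(n-1)$-simplex, and the hypothesis $n \geq 2l+3$ is needed only so that $\partial C_{2l+2}(n)$ is defined.

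The argument is conceptually clean: the density threshold $\frac{k}{n} > \frac{l}{l+1}$ is precisely what guarantees, by pigeonhole on $l+1$ gaps, that the $l+1$ Gale pairs must leave one gap large enough for the complementary arc to have length at most $k$. The only real obstacle is bookkeeping between vertex counts, arc-lengths, and the multiple equivalent forms of the density hypothesis, which is routine once one notes that the pigeonhole inequality and the hypothesis are algebraically the same statement.
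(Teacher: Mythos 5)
Your proof is correct and follows essentially the same route as the paper: both decompose a Gale facet into $l+1$ cyclic pairs, apply pigeonhole to the gaps summing to $n-2(l+1)$, and observe that the density hypothesis $(l+1)k > ln$ is equivalent to the average gap exceeding $n-k-2$, so that the largest gap is at least $n-k-1$. The bookkeeping you spell out (ceilings, arc-lengths vs.\ vertex counts) is exactly what the paper's one-line chain of inequalities compresses.
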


\begin{proof}
Let $\sigma$ be any maximal simplex of $C_{2l+2}(n)$, and write it as a disjoint union of $l+1$ disjoint blocks $[i,i+1]_n$ of size $2$. Then there must be two consecutive (in the cyclic sense) blocks such that the number $g$ of elements in the gap between them satisfies
$$g \geq \frac{n-2(l+1)}{l+1}=\frac{n}{l+1}-2>n-k-2,$$ where the second inequality is equivalent to $\frac{l}{l+1}<\frac{k}{n}$. Then $g\ge n-k-1$, and so there is a segment of $n-k-1$ consecutive elements in $\Z/n$ disjoint from $\sigma$, meaning $\sigma\in \cnk{n}{k}$.
\end{proof}

For example, if $l=0$ then the boundary $\partial C_2(n)\subseteq \cnk{n}{k}$ is just the $n$-cycle passing through all the vertices of $\cnk{n}{k}$ in their natural ordering. If $1\le k< n/2$ then it is not difficult to describe a deformation retraction from $\cnk{n}{k}$ to this embedded circle. Guided by this intuition we will now prove that in general the embedded $\partial C_{2l+2}(n)$ generates the homotopy type of $\cnk{n}{k}$. Note that the proof relies on the prior knowledge of $\cnk{n}{k}\simeq S^{2l+1}$, i.e.\ it cannot be used as a replacement for the proof of Theorem~\ref{thm:cnk}.

\begin{theorem}\label{thm:polytope_equivalence}
Suppose $\frac{l}{l+1} < \frac{k}{n} < \frac{l+1}{l+2}$. Then the inclusion $\partial C_{2l+2}(n) \hookrightarrow \cnk{n}{k}$ is a homotopy equivalence.
\end{theorem}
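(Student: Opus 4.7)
The plan is to generalize the planar projection argument at the start of this section by mapping $\cnk{n}{k}$ linearly into $\R^{2l+2}$ along the trigonometric moment curve. Specifically, I would define $f \colon \cnk{n}{k} \to \R^{2l+2}$ by sending vertex $i$ to $\tgamma_{2l+2}(i/n)$ and extending linearly over each simplex, then establish two facts: (a) $f$ takes values in $\R^{2l+2}\setminus\{\vec{0}\}$, and (b) the composition $\partial C_{2l+2}(n) \xrightarrow{\iota} \cnk{n}{k} \xrightarrow{f} \R^{2l+2}\setminus\{\vec{0}\}$ is a homotopy equivalence. A Whitehead-type argument then forces $\iota$ itself to be one.

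For (a): each maximal simplex $[i,i+k]_n$ maps to the convex hull of $\tgamma_{2l+2}$ applied to an arc of length $k/n < (l+1)/(l+2) = d/(d+1)$ with $d=l+1$, so Corollary~\ref{cor:separating-hyperplane} produces a hyperplane strictly separating this convex hull from $\vec{0}$. For (b): by construction $f\circ\iota$ is the canonical embedding of $\partial C_{2l+2}(n)$ into $\R^{2l+2}$, so the task reduces to showing $\vec{0}$ lies in the \emph{interior} of $C_{2l+2}(n)$, after which radial projection provides the desired deformation retraction onto a sphere. Observe that $\sum_{i=0}^{n-1}\tgamma_{2l+2}(i/n)=\vec{0}$, since each coordinate is $\sum_i \cos(2\pi j i/n)$ or $\sum_i \sin(2\pi j i/n)$ with $1\le j\le l+1<n$, so $\vec{0}$ is the vertex centroid of $C_{2l+2}(n)$. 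If $\vec{0}$ lay on a supporting hyperplane $z^\perp$, then $p_z(t) = \langle z, \tgamma_{2l+2}(t)\rangle$ would be non-negative at all $n$ sample points while summing to zero, forcing $p_z$ to vanish at each of them; but a nonzero homogeneous trigonometric polynomial of degree $l+1$ has at most $2l+2$ zeros per period, and the hypothesis forces $n \ge 2l+3$ (since $\frac{l}{l+1} < \frac{k}{n} \le 1 - \frac{2}{n}$), a contradiction.

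Finally, assemble the data: all three spaces in the diagram have the homotopy type of $S^{2l+1}$, by Theorem~\ref{thm:cnk} for $\cnk{n}{k}$ and by step (b) for the other two, and the composition is a homotopy equivalence. On $H_{2l+1}\cong\Z$ this means $f_*\circ\iota_*=\pm 1$, so $\iota_*$ is $\pm 1$ as well. For $l\ge 1$ both complexes are simply connected CW complexes, so Whitehead's theorem promotes this homology isomorphism to a homotopy equivalence; for $l=0$ both are circles, and the same argument via $\pi_1\cong H_1$ concludes. The step I expect to be most delicate is the verification that $\vec{0}$ lies in the interior of $C_{2l+2}(n)$, since it is the only place a genuinely new input is required beyond the separating-hyperplane corollary already proved.
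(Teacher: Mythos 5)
Your argument is essentially the paper's: map $\cnk{n}{k}$ affinely into $\mathbb{R}^{2l+2}$ via the trigonometric moment curve, use Corollary~\ref{cor:separating-hyperplane} to show the image misses $\vec{0}$, observe the composition with $\partial C_{2l+2}(n) \hookrightarrow \cnk{n}{k}$ is the boundary inclusion into $C_{2l+2}(n)\setminus\{\vec{0}\}$, and finish with Theorem~\ref{thm:cnk} plus Whitehead. The one point you prove that the paper leaves implicit — that $\vec{0}$ is an \emph{interior} point of $C_{2l+2}(n)$, so that $C_{2l+2}(n)\setminus\{\vec{0}\}\simeq S^{2l+1}$ — is genuinely needed, and your centroid-plus-zero-count argument for it is correct and a worthwhile addition.
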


\begin{proof}
The condition $\frac{l}{l+1} < \frac{k}{n} < \frac{l+1}{l+2}$ implies $n \ge 2l+3$. Let $f \colon \cnk{n}{k} \to C_{2l+2}(n) \subset \R^{2l+2}$ be the simplex-wise affine map with $f(i) = \tgamma_{2l+2}(\frac{i}{n})$. For each $i$, the interval $[\frac{i}{n}, \frac{i+k}{n}]_{S^1}$ has length $\frac{k}{n} < \frac{l+1}{l+2}$, and so by Corollary~\ref{cor:separating-hyperplane} there is a hyperplane $H$ with $\tgamma_{2l+2}([\frac{i}{n}, \frac{i+k}{n}]_{S^1})$ strictly on one side of $H$. Hence the set $\conv\{\tgamma_{2l+2}(\frac{i}{n}), \dots, \tgamma_{2l+2}(\frac{i+k}{n})\}$ does not contain the origin, and $f$ maps to $C_{2l+2}(n) \setminus \{\vec{0}\}$.
	
The inclusion $\partial C_{2l+2}(n) \to C_{2l+2}(n) \setminus \{\vec{0}\}$ is a homotopy equivalence between $(2l+1)$-spheres that factors through $\cnk{n}{k}$: the boundary $\partial C_{2l+2}(n)$ includes into $\cnk{n}{k}$ by Lemma~\ref{lem:polytope_inclusion}, and the map $f\colon \cnk{n}{k} \to C_{2l+2}(n) \setminus \{\vec{0}\}$ is the identity on $\partial C_{2l+2}(n)$. We know $\cnk{n}{k} \simeq S^{2l+1}$ by Theorem~\ref{thm:cnk}, and it follows that $\partial C_{2l+2}(n) \hookrightarrow \cnk{n}{k}$ induces isomorphisms of homotopy groups. By Whitehead's theorem this inclusion is a homotopy equivalence.
\end{proof}

\begin{remark}
\label{remark:odd-dual}
As a consequence we have that for $\frac{l}{l+1}<\frac{k}{n}<\frac{l+1}{l+2}$ the fundamental class $[\partial C_{2l+2}(n)]$ of the embedded sphere $\partial C_{2l+2}(n)$ generates $H_{2l+1}(\cnk{n}{k})$. We will sketch another proof of this fact, which exhibits a dual generator of $H^{2l+1}(\cnk{n}{k})$.

A set $Q=\{a_1,\dots,a_{2(l+1)}\}$ will be called $(n,k)$-\emph{admissible} if it satisfies the following conditions (for convenience we also declare $a_0=0$; it is not an element of $Q$):
\begin{eqnarray*}
& & 0=a_0<a_1<\cdots<a_{2(l+1)}<n, \\
& & a_{i+1}-a_i<n-k \quad \mathrm{for}\ i=0,1,\dots,2l+1, \\
& & a_{2(i+1)}-a_{2i}\geq n-k\quad\mathrm{for}\ i=0,\dots,l, \\
& & a_{2(l+1)}-a_1\leq k.
\end{eqnarray*}
Now consider the $(2l+1)$-cochain in $\cnk{n}{k}$ given by the formula 
$\tbeta=\sum_{Q\ \mathrm{is}\ (n,k)-\mathrm{admissible}}Q^\vee.$
A rather tedious computation (omitted) shows that $\tbeta$ is in fact a cocycle. One also checks that the support of $\tbeta$ has exactly one $(2l+1)$-simplex in common with the embedded $\partial C_{2l+2}(n)$; that simplex is $\bigcup_{i=1}^{l+1}\{i(n-k)-1,i(n-k)\}$. It follows that $\langle [\tbeta],[\partial C_{2l+2}(n)]\rangle=\pm 1$ and since we know $H^{2l+1}(\cnk{n}{k})=\Z$ and $H_{2l+1}(\cnk{n}{k})=\Z$, we conclude that $[\tbeta]$ and $[\partial C_{2l+2}(n)]$ are the generators of these respective groups.
\end{remark}

We leave it as an open problem whether the embedded sphere is always a combinatorial deformation retract of $\cnk{n}{k}$.

\begin{conjecture}
\label{conj:collapses}
For $\frac{l}{l+1}<\frac{k}{n}<\frac{l+1}{l+2}$ the complex $\cnk{n}{k}$ simplicially collapses to $\partial C_{2l+2}(n)$.
\end{conjecture}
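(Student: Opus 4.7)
The natural tool is discrete Morse theory. I would aim to construct a Forman-style acyclic matching on the simplices of $\cnk{n}{k} \setminus \partial C_{2l+2}(n)$ that covers every simplex outside this subcomplex; such a matching translates directly into a sequence of elementary simplicial collapses $\cnk{n}{k} \searrow \partial C_{2l+2}(n)$.

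The matching should be built from the cyclic block structure of simplices. By the reformulation of Gale's evenness condition for even dimension recalled after the Gale theorem, $\sigma \in \partial C_{2l+2}(n)$ precisely when the cyclic block decomposition of $\sigma$ can be completed to a disjoint union of exactly $l+1$ adjacent pairs $[i,i+1]_n$. Each $\sigma \in \cnk{n}{k} \setminus \partial C_{2l+2}(n)$ violates this condition in a determinable way: either $\sigma$ contains a run of at least three consecutive integers, or its blocks cannot together be padded to form $l+1$ disjoint pairs within the available cyclic gaps. For each such $\sigma$ I would choose a canonical ``witness'' vertex $v(\sigma)$ attached to the lexicographically first such violation (relative to a fixed cyclic orientation), and match $\sigma$ with $\sigma \triangle \{v(\sigma)\}$. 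Three items must then be verified: (a) both $\sigma$ and its partner lie in $\cnk{n}{k} \setminus \partial C_{2l+2}(n)$, so the matching stays inside this subposet; (b) $v$ is stable under toggling, so we obtain an honest involution and hence a genuine matching; and (c) the matching is acyclic in Forman's sense.

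As a sanity check, the $l=0$ case asserts $\cnk{n}{k} \searrow \partial C_2(n)$ for $1 \le k < n/2$, and this can also be verified directly by induction on $k$: for $k \ge 2$ each maximal simplex $[i,i+k]_n$ admits a codimension-one face that is free in $\cnk{n}{k}$ and does not itself lie in $\cnk{n}{k-1}$, so collapsing the $n$ resulting pairs reduces $\cnk{n}{k}$ to $\cnk{n}{k-1}$, and iterating reaches $\cnk{n}{1} = \partial C_2(n)$. This base case should help calibrate the choice of witness rule in the general construction.

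The main obstacle is establishing acyclicity of the matching in the general case, which demands a consistent local rule for the witness $v(\sigma)$ that is compatible across all cyclic rotations and all block configurations. An alternative attack is to promote the double-suspension recursion of Proposition~\ref{prop:cnk_susp} to a recursion of collapses: both subcomplexes $\bigcup_{i=0}^{n-k-2}\sigma_i$ and $\bigcup_{j=n-k-1}^{n-1}\sigma_j$ used there are cones and hence collapsible, so in principle each application of Lemma~\ref{lem:union} can be upgraded to an explicit collapse. One would then inductively collapse $\cnk{n}{k}$ down to an iterated suspension of $\cnk{k}{2k-n}$, tracking the subcomplex $\partial C_{2l+2}(n)$ through the recursion via a compatible identification with $\partial C_{2l}(k) \subseteq \cnk{k}{2k-n}$ under the double suspension. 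Reconciling the cyclic-polytope recursion with the nerve-complex recursion at the level of collapses, rather than only of homotopy equivalences, is the essential combinatorial task in either approach.
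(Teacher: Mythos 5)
You should first note that the paper does not prove this statement: it is labeled as a \emph{conjecture}, and in the preceding sentence the authors explicitly write that they ``leave it as an open problem'' whether $\partial C_{2l+2}(n)$ is a combinatorial deformation retract of $\cnk{n}{k}$. There is therefore no proof in the paper to compare against, and your proposal, if completed, would resolve an open question rather than recover a known argument.

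As it stands, however, your proposal is a strategy outline, not a proof. The discrete Morse / acyclic-matching framework is indeed the natural tool, but the entire content of the conjecture lies precisely in the two items you defer: producing a well-defined involution on $\cnk{n}{k}\setminus\partial C_{2l+2}(n)$ and proving it acyclic. Saying one would ``choose a canonical witness vertex attached to the lexicographically first violation'' does not yet specify a rule, and in this family of complexes the cyclic symmetry makes naive lexicographic choices prone to creating directed cycles in the modified Hasse diagram. Without a concrete witness rule and an acyclicity argument there is no proof. The alternative route through Proposition~\ref{prop:cnk_susp} has the same difficulty: Lemma~\ref{lem:union} is a homotopy-equivalence statement, and upgrading a double suspension to an explicit collapse onto an embedded sphere, while simultaneously tracking the identification $\partial C_{2l+2}(n)\leftrightarrow\partial C_{2l}(k)$ through the recursion, is exactly the unsolved combinatorial problem, not a reduction of it.

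Your sanity check for $l=0$ is also stated too loosely. The simplices of $\cnk{n}{k}$ not in $\cnk{n}{k-1}$ are exactly those of the form $\{i,i+k\}\cup S$ with $S\subseteq\{i+1,\dots,i+k-1\}$, and there are $n\cdot 2^{k-1}$ of them, so for $k\ge 3$ one cannot pass from $\cnk{n}{k}$ to $\cnk{n}{k-1}$ by removing only $n$ collapse pairs. The intended reduction is salvageable (for each $i$, match $\{i,i+k\}\cup S$ with $\{i,i+k,i+1\}\cup S$ according to whether $i+1\in S$, giving $n\cdot 2^{k-2}$ pairs), but the fact that this small case already required a correction underscores that the general witness rule needs to be written down and checked, not gestured at. In short: the approach is plausible, the problem is still open, and the decisive steps remain to be supplied.
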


We now give an independent topological proof of the upper bound in Theorem~\ref{thm:gaps} on the distance between successive roots of a homogeneous trigonometric polynomial. This extremal problem was studied by Babenko \cite{Babenko1984}, who also showed that $\frac{d}{d+1}$ is an upper bound on the measure of the set where a homogeneous trigonometric polynomial of degree $d$ is positive. Some more general results for a further restricted set of frequencies were shown by Kozma \& Oravecz \cite{KozmaOravecz2002}. Our proof follows the argument given in the proof of Theorem~\ref{thm:polytope_equivalence} and relies on the homotopy types of the $\cnk{n}{k}$. In particular it does not resemble previously known proofs.

\begin{theorem}\label{thm:gapsAlternate}
The distance between any two successive roots in a homogeneous trigonometric polynomial of degree $d$ is at most $\frac{d}{d+1}$.
\end{theorem}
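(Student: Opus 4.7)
The plan is to argue by contradiction, closely mirroring the proof of Theorem~\ref{thm:polytope_equivalence}. Suppose some degree-$d$ homogeneous trigonometric polynomial $p$ has two successive roots at distance $\theta>\frac{d}{d+1}$. Then $p$ has constant sign on the arc between them, so after a parameter shift and, if necessary, negation, we may assume $p>0$ on a closed arc $[0,\theta]$ with $\theta>\frac{d}{d+1}$. I would then pick integers $n\ge 2d+1$ and $k$ with $\frac{d}{d+1}<\frac{k}{n}<\min\!\bigl(\theta,\frac{d+1}{d+2}\bigr)$, which exist by density of the rationals since $\frac{d}{d+1}<\min\bigl(\theta,\frac{d+1}{d+2}\bigr)$. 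By Theorem~\ref{thm:cnk} we obtain $\cnk{n}{k}\simeq S^{2d+1}$, which is $2d$-connected, and by Lemma~\ref{lem:polytope_inclusion} applied with $l=d-1$ (valid since $\frac{d-1}{d}<\frac{k}{n}$) there is a simplicial inclusion $\iota\colon\partial C_{2d}(n)\hookrightarrow\cnk{n}{k}$.

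Next I would use $p$ to build a simplexwise affine map $f\colon\cnk{n}{k}\to\R^{2d}$ by setting $f(i)=\tgamma_{2d}(\tfrac{i}{n})$ on vertices. Because the $2d$-dimensional space of homogeneous degree-$d$ trigonometric polynomials is translation-invariant, for each $i=0,\dots,n-1$ the shifted polynomial $p_i(t)=p(t-\tfrac{i}{n})$ is another degree-$d$ homogeneous trigonometric polynomial, strictly positive on $[\tfrac{i}{n},\tfrac{i}{n}+\theta]_{S^1}\supseteq[\tfrac{i}{n},\tfrac{i+k}{n}]_{S^1}$ because $\tfrac{k}{n}<\theta$. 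Writing $p_i=p_{z_i}$ for some $z_i\in\R^{2d}$, the hyperplane $z_i^\perp$ strictly separates the curve segment $\tgamma_{2d}([\tfrac{i}{n},\tfrac{i+k}{n}]_{S^1})$ from the origin, just as in Corollary~\ref{cor:separating-hyperplane}. Consequently the convex hull of $\{\tgamma_{2d}(\tfrac{j}{n}):i\le j\le i+k\}$ avoids the origin for every $i$, so the affine extension $f$ sends every maximal simplex of $\cnk{n}{k}$ away from $\vec 0$, yielding $f\colon\cnk{n}{k}\to\R^{2d}\setminus\{\vec 0\}$.

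To finish, I would observe that the composite $f\circ\iota\colon\partial C_{2d}(n)\to\R^{2d}\setminus\{\vec 0\}$ is nothing other than the natural simplexwise affine embedding of the boundary of the cyclic polytope in $\R^{2d}$. Since $\sum_{i=0}^{n-1}\tgamma_{2d}(\tfrac{i}{n})=\vec 0$ for $n>d$ (orthogonality of characters), the origin is the centroid of the vertices of $C_{2d}(n)$, hence an interior point of this full-dimensional polytope, making the embedding a homotopy equivalence $S^{2d-1}\simeq\R^{2d}\setminus\{\vec 0\}$. But $\cnk{n}{k}\simeq S^{2d+1}$ is $2d$-connected while $\partial C_{2d}(n)\simeq S^{2d-1}$ has dimension $2d-1$, so $\iota$ is null-homotopic, and therefore so is $f\circ\iota$, contradicting that $f\circ\iota$ is a homotopy equivalence of $(2d-1)$-spheres. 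The main obstacle lies in the middle step: one must verify carefully that the single positivity interval of $p$ really does propagate, via shift invariance, to a coherent family of separating hyperplanes covering all $n$ maximal simplices at once, which is precisely where the quantitative gap $\tfrac{k}{n}<\theta$ is used.
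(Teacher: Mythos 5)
Your argument is correct and follows the paper's own strategy essentially verbatim: build a simplexwise-affine map $f\colon\cnk{n}{k}\to\R^{2d}\setminus\{\vec 0\}$ from a family of shifted separating hyperplanes, include $\partial C_{2d}(n)$ into $\cnk{n}{k}$ via Lemma~\ref{lem:polytope_inclusion}, and derive a contradiction because the factorization forces the homotopy equivalence $\partial C_{2d}(n)\hookrightarrow C_{2d}(n)\setminus\{\vec 0\}$ to be null-homotopic. Your added constraint $\tfrac{k}{n}<\tfrac{d+1}{d+2}$ (to pin down $\cnk{n}{k}\simeq S^{2d+1}$) and the centroid argument for why $\vec 0$ is interior to $C_{2d}(n)$ are both fine but unnecessary, since the paper's weaker observation $H_{2d-1}(\cnk{n}{k})=0$ already holds for every $\tfrac{k}{n}>\tfrac{d}{d+1}$.
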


\begin{proof}
Suppose for a contradiction there is some homogeneous trigonometric polynomial of degree $d$ with two successive roots more than $\frac{d}{d+1}$ apart. After translating and changing sign, if necessary, we can find a $z \in \R^{2d} \setminus \{\vec{0}\}$ and $\theta>\frac{d}{d+1}$ such that $p_z(t)>0$ for all $t\in[0,\theta]$. Given an arbitrary $x \in \R$ there is --- by appropriately shifting $t \mapsto p_z(t-x)$ --- a coefficient vector $z(x) \in \R^{2d} \setminus \{0\}$ such that $p_{z(x)}$ is positive on $[x, x+\theta]$.

Choose integers $n \ge 2d+1$ and $k$ with $\frac{d}{d+1} < \frac{k}{n} < \theta$. Let $f\colon \cnk{n}{k} \to C_{2d}(n)\subset \R^{2d}$ be the simplex-wise affine map with $f(i) = \tgamma_{2d}(\frac{i}{n})$. For every face $\sigma$ of $\cnk{n}{k}$, some hyperplane $z(x)^\perp$ separates $f(\sigma)$ from the origin, and thus $f$ maps to $C_{2d}(n)\setminus \{\vec{0}\}$. Since $\frac{k}{n}>\frac{d}{d+1}>\frac{d-1}{d}$, Lemma~\ref{lem:polytope_inclusion} ensures that $\partial C_{2d}(n)\subseteq \cnk{n}{k}$ with the natural ordering of the vertices. It follows that the inclusion $\iota \colon \partial C_{2d}(n) \to C_{2d}(n) \setminus \{\vec{0}\}$ factors through $\cnk{n}{k}$. This is a contradiction, since $\iota$ is a homotopy equivalence between spaces homotopy equivalent to $S^{2d-1}$, but $H_{2d-1}(\cnk{n}{k})$ is trivial for $\frac{d}{d+1} < \frac{k}{n}$.
\end{proof}

\section{The even-dimensional spheres: minimal generators}

In this section we consider the case when  $\frac{k}{n} = \frac{l}{l+1}$ for some $l\geq 0$, when $\cnk{n}{k}$ is homotopy equivalent to a wedge of $2l$-spheres by Theorem~\ref{thm:cnk}. It turns out that a basis of the free abelian group $H_{2l}(\cnk{n}{k})$ can be specified using embedded spheres $\partial \Delta^{2l+1}\subset \cnk{n}{k}$. It is well-known that this is the smallest possible support a $2l$-dimensional homology class in a simplicial complex can have.

Consider the oriented $(2l+1)$-simplex
\[ \Delta = [0, 1, n-k, n-k+1, 2(n-k), 2(n-k)+1, \dots, l(n-k), l(n-k)+1]. \]
Note that $\frac{k}{n} = \frac{l}{l+1}$ implies $l(n-k)=k$ and $(l+1)(n-k)=n$. The simplex $\Delta$ is a minimal non-face of $\cnk{n}{k}$, and hence $\partial\Delta\subset \cnk{n}{k}$ is an embedded $2l$-sphere whose fundamental cycle
\[ \partial \Delta = \sum_{i=0}^l\big( \Delta \setminus \{i(n-k)\} - \Delta \setminus \{i(n-k)+1\}\big) \] is a $2l$-cycle in $\cnk{n}{k}$. Let $\alpha = [\partial \Delta] \in H_{2l}(\cnk{n}{k})$ be its homology class. We will show that $\alpha\neq 0$ by constructing a cohomology class $\beta \in H^{2l}(\cnk{n}{k})$ which pairs nontrivially with $\alpha$.

For convenience we define $I_i = \{v~|~i(n-k)<v<(i+1)(n-k)\}$ for $0 \le i \le l$. Let $\nB$ be the set of all oriented $2l$-simplices $B \in \cnk{n}{k}$ such that
\[ B = [0, v_0, n-k, v_1, 2(n-k), v_2, \dots, (l-1)(n-k), v_{l-1}, l(n-k)], \]
with $v_i \in I_i$ for all $0\leq i<l$ (we have $\nB=\{[0]\}$ when $l=0$). Consider the cochain
\[ \tbeta=\sum_{B\in\nB}B^\vee. \]
We claim that $\tbeta$ is a cocycle. Suppose for a contradiction that some $(2l+1)$-simplex $\sigma \in \cnk{n}{k}$ satisfies $(\delta\tbeta)(\sigma) = \tbeta(\partial\sigma) \neq 0$. Necessarily $i(n-k)\in \sigma$ for $0 \le i \le l$ and $|\sigma \cap I_i| \ge 1$ for $0 \le i < l$, for otherwise every face in $\partial \sigma$ is different from every simplex in the support of $\tbeta$. The condition $\sigma \in \cnk{n}{k}$ now implies  $\sigma \cap I_l = \emptyset$. Since $|\sigma| = 2l+2$, there exists an index $0 \le i' < l$ with $|\sigma \cap I_{i'}| = 2$ and $|\sigma \cap I_i| = 1$ for all $0 \le i < l$ with $i \neq i'$. Let $\sigma \cap I_{i'} = \{u,v\}$. In the sum $\tbeta(\partial \sigma)=\sum_{B \in \nB, \tau \in \partial \sigma}B^\vee(\tau)$ there are exactly two nonzero terms, namely when $B = \tau = \sigma \setminus \{u\}$ and $B = \tau = \sigma \setminus \{v\}$, and these terms appear with opposite signs since $\sigma \setminus \{u\}$ and $\sigma \setminus \{v\}$ are consecutive faces of $\sigma$.
So in fact $\tbeta(\partial \sigma) = 0$, and $\tbeta$ is a cocycle.

We define the cohomology class $\beta=[\tbeta] \in H^{2l}(\cnk{n}{k})$. Since the family $\nB$ contains exactly one simplex of the form $\Delta \setminus \{v\}$, namely for $v = l(n-k)+1$, the evaluation of $\tbeta$ on $\partial \Delta$ is $-1$. It follows that $\langle \beta, \alpha \rangle=-1$, proving $\alpha \neq 0$ and $\beta \neq 0$.

All generators of $H_{2l}(\cnk{n}{k})$ can be obtained by rotations of $\alpha$. For this, let $g$ be the generator of $\Z/n$ acting on $\cnk{n}{k}$ via the vertex map $i \mapsto (i+1)\md n$. Define $\alpha_i = [g^i(\partial\Delta)]$ and $\beta_i = [\tbeta g^{-i}]$, and note that $g^{n-k}\Delta=\Delta$ holds at the level of chains, hence $\alpha_i = \alpha_{i\md (n-k)}$ in homology. We can now prove the following.

\begin{proposition}
\label{prop:even-generators}
Suppose $\frac{k}{n}=\frac{l}{l+1}$ for some $l\geq 0$. Then the homology classes $\alpha_0,\dots,\alpha_{n-k-2}$ defined above form a basis of the group $\redhom_{2l}(\cnk{n}{k})=\Z^{n-k-1}$. Moreover
$\sum_{i=0}^{n-k-1}\alpha_{i}=0.$ 
\end{proposition}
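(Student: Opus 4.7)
The plan is to exploit the cocycles $\tbeta g^{-i}$ and cycles $g^i\partial\Delta$ together with the $\Z/n$-action to compute all pairings $\langle\beta_i,\alpha_j\rangle$ explicitly, and then derive both claims by elementary linear algebra, using that the evaluation pairing $\redhom_{2l}(\cnk{n}{k})\times H^{2l}(\cnk{n}{k})\to\Z$ is perfect (since $\cnk{n}{k}\simeq\bigvee^{n-k-1}S^{2l}$ has only free homology).

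The main work is the computation of $c_m:=\tbeta(\partial g^m\Delta)=\langle\beta,\alpha_m\rangle$ for $m=0,\dots,n-k-1$. The vertex set of $g^m\Delta$ is the union of the two residue cosets modulo $n-k$ in $\Z/n$ with representatives $m$ and $m+1$, while every face in $\nB$ must contain the full zero coset $\{0,n-k,\dots,l(n-k)\}$. Hence a face of $g^m\Delta$ can lie in $\nB$ only when one of those two cosets is the zero coset, i.e.\ only when $m\equiv 0$ or $m\equiv -1\pmod{n-k}$; in all other cases $c_m=0$. For $m=0$ the sole candidate face is $\Delta\setminus\{l(n-k)+1\}$, the last vertex, whose coefficient in $\partial\Delta$ is $(-1)^{2l+1}$, so $c_0=-1$. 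For $m=n-k-1$ the unique face of $g^{n-k-1}\Delta$ in $\nB$ is obtained by removing its sole vertex in $I_l$, namely $n-1$; a careful sign accounting---the cyclic reordering of the vertex list of $g^{n-k-1}\Delta$ into sorted form contributes $(-1)^{2l+1}$, while the position of the removed vertex in the boundary contributes $(-1)^{2l}$---gives $c_{n-k-1}=+1$. This sign bookkeeping is the only delicate point in the argument.

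With the $c_m$ determined, the pairing matrix $A_{ij}=\langle\beta_i,\alpha_j\rangle$ equals $c_{j-i}$ (indices modulo $n-k$). Restricted to $0\le i,j\le n-k-2$ it becomes lower bidiagonal with $-1$ on the main diagonal and $+1$ on the subdiagonal, since the would-be wrap-around entry $A_{0,n-k-1}$ lies outside the submatrix. Its determinant is $(-1)^{n-k-1}=\pm 1$, which forces $\alpha_0,\dots,\alpha_{n-k-2}$ to be $\Z$-linearly independent in $\redhom_{2l}(\cnk{n}{k})\cong\Z^{n-k-1}$ and hence a basis; the same determinant shows that $\beta_0,\dots,\beta_{n-k-2}$ is a basis of $H^{2l}(\cnk{n}{k})$. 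For the relation, each row of the full $(n-k)\times(n-k)$ circulant $A$ sums to $c_0+c_{n-k-1}=0$, so $S:=\sum_{j=0}^{n-k-1}\alpha_j$ pairs to zero with every $\beta_i$, and perfectness of the pairing then forces $S=0$.
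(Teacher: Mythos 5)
Your approach is essentially the paper's: compute the pairing matrix $\langle\beta_i,\alpha_j\rangle$, observe it is unimodular on the range $0\le i,j\le n-k-2$, and deduce the basis claim; then derive the relation $\sum_i\alpha_i=0$ from the pairing. The paper packages the linear algebra a bit differently (it introduces $\gamma_i=-\sum_{s\le i}\beta_s$, checks the evaluation matrix $\langle\gamma_i,\alpha_j\rangle$ is the identity, and reads off the coefficients of $\alpha_{n-k-1}$ in the dual basis), while you compute the determinant of the bidiagonal block directly and use perfectness of the pairing to kill $\sum_i\alpha_i$; both are equivalent and correct.

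One slip in the bookkeeping, which you yourself flag as ``the only delicate point.'' You assert that reordering the vertex list of $g^{n-k-1}\Delta$ into sorted form contributes $(-1)^{2l+1}$ and that the position of the removed vertex in the boundary contributes $(-1)^{2l}$; but those two factors multiply to $-1$, not the stated $c_{n-k-1}=+1$. You have mixed two consistent conventions. Either (a) keep the $g$-induced order $[n-k-1,n-k,2(n-k)-1,\dots,n-1,0]$, where $n-1$ sits at position $2l$, giving $(-1)^{2l}$, and then cyclically shift the resulting $(2l+1)$-element face to $[0,n-k-1,\dots,l(n-k)]$, giving another $(-1)^{2l}$; or (b) first sort the full $(2l+2)$-element list, which costs $(-1)^{2l+1}$, after which $n-1$ is at position $2l+1$, costing another $(-1)^{2l+1}$. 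Either way the product is $+1$, matching the paper's $\langle\beta_0,\alpha_{n-k-1}\rangle=1$, so the end result and the remainder of your argument are unaffected.
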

\begin{proof}
First of all, for $0\leq i\leq n-k-1$ we have
$$\langle \beta_0, \alpha_i \rangle = \begin{cases}
-1 & \mbox{for }i=0\\
1 & \mbox{for }i=n-k-1 \\
0 & \mbox{otherwise.} \end{cases}
$$
We checked $\langle\beta_0,\alpha_0\rangle=\langle\beta,\alpha\rangle=-1$ previously. If $i\neq 0,n-k-1$ then $g^i\Delta$ avoids the vertex $0$, hence $g^i(\partial\Delta)$ pairs trivially with all simplices in $\nB$. If $i=n-k-1$ then the oriented simplex $g^{n-k-1}(\Delta\setminus\{l(n-k)\})$, which appears in $g^{n-k-1}(\partial\Delta)$ with positive sign, matches with one oriented simplex in $\nB$. 

In general we have $\langle\beta_i,\alpha_j\rangle=\langle\beta_0,\alpha_{i-j}\rangle$. Let $\gamma_i = -\sum_{s=0}^i \beta_s$ for $i = 0, \dots, n-k-2$. Then the evaluation matrix $\langle \gamma_i, \alpha_j \rangle_{i,j = 0, \dots, n-k-2}$ is the identity matrix, hence invertible over $\Z$. Since the rank of $\redhom_{2l}(\cnk{n}{k})$ is $n-k-1$, this means that the $\alpha_i$ for $0, \dots, n-k-2$ form a generating set for homology, and the $\gamma_i$ form a dual generating set for cohomology. It follows that
\[ \alpha_{n-k-1} = \sum_{i=0}^{n-k-2} \langle \gamma_i, \alpha_{n-k-1} \rangle \alpha_i
= -\sum_{i=0}^{n-k-2} \sum_{s=0}^{i} \langle \beta_s, \alpha_{n-k-1} \rangle \alpha_i
= -\sum_{i=0}^{n-k-2} \langle\beta_0,\alpha_{n-k-1} \rangle \alpha_i
= -\sum_{i=0}^{n-k-2} \alpha_i. \]

\end{proof}

\begin{remark}
Also here we can make a relation to cyclic polytopes, albeit it is not as direct as in the case of odd-dimensional spheres from the previous section. The proof of Lemma~\ref{lem:polytope_inclusion} carries over to show that $\cnk{n}{k}$ contains all maximal faces of $\partial C_{2l+2}(n)$ \emph{except} for $\Delta, g\Delta,\dots,g^{n-k-1}\Delta$. Thus $\cnk{n}{k} \cap \partial C_{2l+2}(n)$ can be obtained from $\partial C_{2l+2}(n) \cong S^{2l+1}$ by deleting $n-k$ maximal simplices, giving $\cnk{n}{k} \cap \partial C_{2l+2}(n) \simeq \bigvee^{n-k-1} S^{2l}$.
	
Part of the Mayer--Vietoris sequence for $\cnk{n}{k} \cup \partial C_{2l+2}(n)$ is
\[ \redhom_{2l}\bigl(\cnk{n}{k} \cap \partial C_{2l+2}(n)\bigr) \to \redhom_{2l}\bigl(\cnk{n}{k}\bigr) \oplus \redhom_{2l}\bigl(\partial C_{2l+2}(n)\bigr) \to \redhom_{2l}\bigl(\cnk{n}{k} \cup \partial C_{2l+2}(n)\bigr). \]
The last arrow is induced by the inclusion $\iota\colon \cnk{n}{k} \to \cnk{n}{k} \cup \partial C_{2l+2}(n)$. We proved that $\redhom_{2l}(\cnk{n}{k})$ is generated by the classes of $\partial\Delta, g(\partial\Delta),\dots,g^{n-k-2}(\partial\Delta)$ and these cycles are boundaries in $\partial C_{2l+2}(n)$, thus $\iota_{\ast} = 0$. Hence the first arrow is a surjection $\Z^{n-k-1} \to \Z^{n-k-1}$, which implies $\cnk{n}{k} \cap \partial C_{2l+2}(n) \hookrightarrow \cnk{n}{k}$ is a homotopy equivalence (again, we need to know the homotopy types of $\cnk{n}{k}$ in advance).
\end{remark}

\section{Induced representation in homology}
\label{sect:action}

We are now in position to describe the equivariant structure of $\cnk{n}{k}$. If $k\in\{0,n-2,n-1\}$ then $\cnk{n}{k}$ carries the permutation action of the full symmetric group $\Sigma_n$. In the remaining cases all the automorphisms of $\cnk{n}{k}$ are generated by the symmetries of the regular $n$-gon:

\begin{lemma}
\label{lem:aut}
If $1\leq k\leq n-3$ then the automorphism group of $\cnk{n}{k}$ is isomorphic to the dihedral group $D_{2n}$.
\end{lemma}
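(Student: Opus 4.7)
The plan is to study the induced action of an automorphism on the $n$ facets of $\cnk{n}{k}$, show this action lands in the dihedral group $D_{2n}$, and show the resulting homomorphism is both injective and surjective.

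For the first step, since $1\le k \le n-3$ the complex $\cnk{n}{k}$ has exactly $n$ maximal simplices $\sigma_0,\dots,\sigma_{n-1}$, each of dimension $k$, so any automorphism $\phi$ permutes them and preserves the sizes of their pairwise intersections. A short modular computation on cyclic windows of length $k+1$ shows that for $k\le n-3$ one has $|\sigma_i\cap \sigma_j|=k$ if and only if $j\equiv i\pm 1\pmod n$; the degenerate ``antipodal'' coincidences that produce extra symmetry occur precisely in the excluded range $k\in\{n-2,n-1\}$. The codimension-one adjacency graph on $\{\sigma_0,\dots,\sigma_{n-1}\}$ is therefore the cycle $C_n$, and $\phi$ induces an automorphism of $C_n$, giving a homomorphism $\Phi\colon \mathrm{Aut}(\cnk{n}{k})\to\mathrm{Aut}(C_n)\cong D_{2n}$.

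For injectivity, suppose $\phi\in \ker \Phi$, so $\phi(\sigma_i)=\sigma_i$ setwise for every $i$. Then for each vertex $v$ the image $\phi(v)$ lies in exactly the same maximal simplices as $v$. The index set $\{i~|~v\in\sigma_i\}$ equals the cyclic interval $\{v-k,v-k+1,\dots,v\}\subset \Z/n$ of length $k+1<n$, and such a proper cyclic interval is uniquely determined by its endpoint $v$ (the unique element of the interval whose cyclic successor lies outside it); hence $\phi(v)=v$ and $\phi=\mathrm{id}$. For surjectivity, the rotation $i\mapsto i+1$ and reflection $i\mapsto -i$ on $\Z/n$ clearly permute $\{\sigma_0,\dots,\sigma_{n-1}\}$, so they give automorphisms of $\cnk{n}{k}$ generating a subgroup isomorphic to $D_{2n}$; combined with injectivity of $\Phi$ this yields $\mathrm{Aut}(\cnk{n}{k})\cong D_{2n}$.

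The only step requiring real care is the adjacency identification: one must simultaneously treat the ``no wrap'' and ``wrap-around'' cases when intersecting two cyclic arcs of length $k+1$ in $\Z/n$, and verify that the bound $k\le n-3$ is exactly what excludes additional shifts $d$ giving $|\sigma_0\cap \sigma_d|=k$ coming from ``antipodal'' overlap when the two arcs together cover all of $\Z/n$.
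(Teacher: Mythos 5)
Your proof is correct and follows essentially the same route as the paper: both approaches deduce that the $n$ facets form a cycle $C_n$ under the codimension-one adjacency relation (you phrase this as $|\sigma_i\cap\sigma_j|=k$, the paper equivalently as the symmetric difference $\sigma_i\triangle\sigma_j$ having size two), from which the automorphism group is pinned down as $D_{2n}$. One small virtue of your write-up is that you make the injectivity of the map $\mathrm{Aut}(\cnk{n}{k})\to\mathrm{Aut}(C_n)$ explicit via the observation that $\{i:v\in\sigma_i\}$ is a proper cyclic interval that determines $v$ --- a step the paper leaves implicit.
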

\begin{proof}
Let $h \colon \{0,\dots,n-1\}\to\{0,\dots,n-1\}$ be a bijection which induces an automorphism of $\cnk{n}{k}$. Then there exists some $h'$ defined by the condition that $h([i,\dots,i+k]_n)=[h'(i),\dots,h'(i)+k]_n$ for all $i$. For each $i$ the symmetric difference of $h([i,\dots,i+k]_n)$ and $h([i+1,\dots,i+1+k]_n)$ consists of two elements, namely $h(i)$ and $h(i+k+1)$. If $1\le k\le n-3$ this is possible if and only if $h'(i+1)=h'(i)\pm 1\md n$. It follows that $h'$ is an automorphism of the cycle graph $C_n$. Clearly any automorphism of $C_n$ extends to an automorphism of $\cnk{n}{k}$.
\end{proof}

The action of $D_{2n}$ on $\cnk{n}{k}$ induces an action on homology, which we now describe. Let 
\[ D_{2n}=\langle g,\varepsilon~|~g^n=1,\ \varepsilon^2 = 1,\ \varepsilon g\varepsilon=g^{-1}\rangle \]
with the action on $\{0,\dots,n-1\}$ given by $g(i)=(i+1)\md n$ and $\varepsilon(i)=-i\md n$.

\begin{proposition}
\label{prop:action-even}
Suppose that $\frac{k}{n}=\frac{l}{l+1}$ for some $l\geq 0$. Then the action of $D_{2n}$ on the elements $\alpha_i$ in $H_{2l}(\cnk{n}{k})$ is given by
$$g\alpha_i=\alpha_{i+1},\qquad \varepsilon\alpha_i=(-1)^{l+1}\alpha_{-i-1}.$$
\end{proposition}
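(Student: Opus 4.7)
The first identity $g\alpha_i = \alpha_{i+1}$ is immediate from $\alpha_i = [g^i \partial\Delta]$ and the fact that the $\Z/n$-action is simplicial and hence commutes with the boundary operator. So the entire content of the proposition lies in computing $\varepsilon\alpha_0$; once this is known, the general identity follows from the dihedral relation $\varepsilon g^i = g^{-i}\varepsilon$, giving $\varepsilon\alpha_i = \varepsilon g^i \alpha_0 = g^{-i}\varepsilon\alpha_0$.

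The plan for $\varepsilon\alpha_0$ is to establish the chain-level identity
\[ \varepsilon\Delta = (-1)^{l+1}\, g^{-1}\Delta \]
in the simplicial chain group, from which $\varepsilon[\partial\Delta] = [\partial(\varepsilon\Delta)] = (-1)^{l+1}[g^{-1}\partial\Delta] = (-1)^{l+1}\alpha_{-1}$ follows automatically. First I would check that $\varepsilon\Delta$ and $g^{-1}\Delta$ have the same underlying vertex set. Using $l(n-k)=k$ and $(l+1)(n-k)=n$, one computes modulo $n$ that $\varepsilon(i(n-k)) = (l+1-i)(n-k)$ and $\varepsilon(i(n-k)+1) = (l+1-i)(n-k)-1$, so that
\[ \varepsilon\Delta = \{0,\, n-1\}\cup\bigcup_{j=1}^{l}\{j(n-k)-1,\, j(n-k)\} = g^{-1}\Delta \]
as sets.

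The main computation is then the sign of the permutation relating the two natural orderings. Writing
\[ \varepsilon\Delta = [0,\, n-1,\, l(n-k),\, l(n-k)-1,\, (l-1)(n-k),\, (l-1)(n-k)-1,\, \dots,\, n-k,\, n-k-1] \]
and
\[ g^{-1}\Delta = [n-1,\, 0,\, n-k-1,\, n-k,\, 2(n-k)-1,\, 2(n-k),\, \dots,\, l(n-k)-1,\, l(n-k)], \]
one sees that the permutation $\pi$ with $(\varepsilon\Delta)_i = (g^{-1}\Delta)_{\pi(i)}$ is the transposition $(0\ 1)$ on the first two coordinates together with the reversal of the remaining $2l$ coordinates. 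The sign is therefore $(-1)\cdot (-1)^{\binom{2l}{2}} = (-1)\cdot(-1)^l = (-1)^{l+1}$, which is the desired sign.

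The only step that requires care is the sign bookkeeping at the end, since an off-by-one in the length of the reversed block would change the parity; but once the vertex order is pinned down as above, the counting is routine. Everything else is formal manipulation: commutativity of $g$ with $\partial$, the dihedral relation, and the definition of the classes $\alpha_i$.
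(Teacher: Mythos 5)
Your proposal is correct and takes essentially the same route as the paper: both establish the chain-level identity $\varepsilon\Delta = (-1)^{l+1} g^{-1}\Delta$ and then deduce $\varepsilon\alpha_i$ via the dihedral relation. The only cosmetic difference is in the sign bookkeeping — you decompose the permutation as a transposition of the first two entries followed by a reversal of the remaining $2l$, with sign $(-1)\cdot(-1)^{\binom{2l}{2}}=(-1)^{l+1}$, while the paper swaps within each of the $l+1$ pairs (sign $(-1)^{l+1}$) and then reorders the pairs (sign $+1$); both computations are valid and yield the same result.
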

\begin{proof}
The first equality holds by definition. For the second one, we first verify the following equation on the level of chains (note that $(l+1)(n-k)=n$):
\begin{align*}
\varepsilon\Delta&=\varepsilon[0,1,n-k,n-k+1,\dots,l(n-k),l(n-k)+1]\\
&=[0,-1,l(n-k),l(n-k)-1,\dots,n-k,(n-k)-1]\\
&=(-1)^{l+1}g^{-1}\Delta.
\end{align*}
The sign is introduced by changing the order in each pair of elements of the form $a(n-k),a(n-k)-1$, followed by reordering the pairs. Reordering pairs does not change sign.

It immediately follows that
$$\varepsilon\alpha_0=(-1)^{l+1}\alpha_{-1}$$
and then we obtain
$$\varepsilon\alpha_i=\varepsilon g^i\alpha_0=g^{-i}\varepsilon\alpha_0=(-1)^{l+1}g^{-i}\alpha_{-1}=(-1)^{l+1}\alpha_{-i-1}.$$
\end{proof}

\begin{proposition}
\label{prop:action-odd}
If $\frac{l}{l+1}<\frac{k}{n}<\frac{l+1}{l+2}$ then the action of $D_{2n}$ on $H_{2l+1}(\cnk{n}{k})=\Z$ is given by
$$g=\mathrm{id},\qquad \varepsilon=(-1)^{l+1}\cdot\mathrm{id}.$$
\end{proposition}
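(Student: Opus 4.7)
The strategy is to exploit Theorem \ref{thm:polytope_equivalence}, which identifies $H_{2l+1}(\cnk{n}{k}) \cong \Z$ with $H_{2l+1}(\partial C_{2l+2}(n))$ via the inclusion. By Gale's evenness condition the maximal simplices of $\partial C_{2l+2}(n)$ are disjoint unions of $l+1$ pairs of the form $[i,i+1]_n$, and such pairs are clearly permuted by both $g$ and $\varepsilon$. Hence both elements preserve the subcomplex $\partial C_{2l+2}(n) \subseteq \cnk{n}{k}$, the inclusion is $D_{2n}$-equivariant, and it suffices to compute the induced action on $H_{2l+1}(\partial C_{2l+2}(n))$.

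I would then realize $\partial C_{2l+2}(n)$ geometrically as the boundary of $\conv\{\tgamma_{2l+2}(i/n) : i=0,\dots,n-1\}$ in $\R^{2l+2}$. Under this realization the action of $D_{2n}$ on vertices extends to a linear action on $\R^{2l+2}$. The rotation $g$ sends $\tgamma_{2l+2}(t)$ to $\tgamma_{2l+2}(t+\tfrac{1}{n})$, and its effect on the $j$-th coordinate block $(\cos(2\pi j t),\sin(2\pi j t))$ is the planar rotation by angle $2\pi j/n$, so $g$ acts as an element of $\mathrm{SO}(2l+2)$ and has determinant $+1$. The reflection $\varepsilon$ sends $\tgamma_{2l+2}(t)$ to $\tgamma_{2l+2}(-t)$, and on the $j$-th block this is $(x,y)\mapsto(x,-y)$, whose determinant is $-1$; the total determinant of $\varepsilon$ is therefore $(-1)^{l+1}$. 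Both linear extensions lie in $O(2l+2)$, fix the origin, and preserve the polytope $C_{2l+2}(n)$ (since they permute its vertex set).

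To convert these determinants into the induced action on $H_{2l+1}$, I would verify that the origin lies in the interior of $C_{2l+2}(n)$. This follows from the centroid identity
\[ \tfrac{1}{n}\sum_{i=0}^{n-1}\tgamma_{2l+2}(i/n) = \vec{0}, \]
which holds because $\sum_{i=0}^{n-1}\cos(2\pi j i/n)=\sum_{i=0}^{n-1}\sin(2\pi j i/n)=0$ for every $1\le j\le l+1$ (note $n \ge 2l+3 > l+1$). Hence the inclusion $\partial C_{2l+2}(n) \hookrightarrow \R^{2l+2}\setminus\{\vec{0}\}$ is a homotopy equivalence between $(2l+1)$-spheres. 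Since $g$ and $\varepsilon$ commute with this inclusion, their induced maps on $H_{2l+1}(\partial C_{2l+2}(n))\cong H_{2l+1}(\R^{2l+2}\setminus\{\vec{0}\})\cong\Z$ agree with their action on the latter, which is multiplication by the determinant. This yields $g = \mathrm{id}$ and $\varepsilon = (-1)^{l+1}\cdot\mathrm{id}$ on $H_{2l+1}(\cnk{n}{k})$.

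The main conceptual obstacle is ensuring that the simplicial action on $\partial C_{2l+2}(n)$ really matches the orthogonal linear action on $\R^{2l+2}$; once one observes that both are governed by the same permutation of the vertices on the trigonometric moment curve, everything reduces to the standard fact that the degree of an orthogonal self-map of a sphere centered at a fixed point equals its determinant. The centroid computation is routine and the rest is determinant bookkeeping.
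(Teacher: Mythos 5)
Your proposal is correct and follows essentially the same approach as the paper: identify $H_{2l+1}(\cnk{n}{k})$ with $H_{2l+1}(\partial C_{2l+2}(n))$ via Theorem~\ref{thm:polytope_equivalence}, realize the dihedral action on $\partial C_{2l+2}(n)$ as the restriction of the orthogonal linear maps $\bigoplus_j \omega_{j,n}$ and the blockwise reflection on $\R^{2l+2}$, and read off the degree as the determinant $+1$ and $(-1)^{l+1}$, respectively. Your explicit centroid computation verifying that the origin is interior to $C_{2l+2}(n)$ is a detail the paper leaves implicit, but the underlying argument is the same.
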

\begin{proof}
We will use the notation and results of Section~\ref{sect:cyclic-polytopes}.

The action of $D_{2n}$ on $\cnk{n}{k}$ preserves the homotopy equivalent subsphere $\partial C_{2l+2}(n)$, so it suffices to compute the degree of the maps $g$ and $\varepsilon$ acting on $\partial C_{2l+2}(n)$. Recall that the cyclic polytope $C_{2l+2}(n)=\conv\{\tgamma_{2l+2}(0), \tgamma_{2l+2}(\frac{1}{n}), \dots, \tgamma_{2l+2}(\frac{n-1}{n})\}\subset \R^{2l+2}$. 

Let $\omega_{j,n} \colon \R^2\to\R^2$ be the rotation by an angle $2\pi j/n$ around the origin and define $R \colon (\R^2)^{l+1}\to(\R^2)^{l+1}$ by $R=\omega_{1,n}\oplus\cdots\oplus\omega_{l+1,n}$. Then one has $R(\tgamma_{2l+2}(\frac{i}{n}))=\tgamma_{2l+2}(\frac{i+1}{n})$, so $R|_{\partial C_{2l+2}(n)}=g$. Since $R$ is a rotation, its degree is $1$.

Next, if $E \colon \R^{2l+2}\to\R^{2l+2}$ is given by $E(x_1,x_2,\dots,x_{2l+1},x_{2l+2})=(x_1,-x_2,\dots,x_{2l+1},-x_{2l+2})$ then we easily check $E(\tgamma_{2l+2}(\frac{i}{n}))=\tgamma_{2l+2}(\frac{-i}{n})$, which implies $E|_{\partial C_{2l+2}(n)}=\varepsilon$. Since $E$ is a composition of $l+1$ hyperplane reflections, its degree is $(-1)^{l+1}$.
\end{proof}

\section{Clique complexes of evenly-spaced arcs}\label{sec:clique_even}

In this section we relate the nerve complexes of circular arcs to the clique complexes of their 1-skeletons.

\begin{definition}
For $n \ge 1$ and $k \ge 0$, we define the \emph{clique complex} $\vnk{n}{k}$ as $\vnk{n}{k} = \cl(\cnk{n}{k}^{(1)})$, i.e.\ the maximal simplicial complex with $1$-skeleton $\cnk{n}{k}^{(1)}$.
\end{definition}

If $\nU_{n,k}$ is a collection of evenly-spaced arcs defined as in \eqref{eq:evenly-spaced} and $X_n\subset S^1$ is a set of $n$ equally-spaced points, then analogous to the sequence of isomorphisms $\cech(X_n,S^1;\frac{k}{2n})\cong\nN(\nU_{n,k})\cong\cnk{n}{k}$ in \eqref{eq:cech-iso} we have
\begin{equation}
\label{eq:clique-iso}
\vr\bigl(X_n,\tfrac{k}{n}\bigr)\cong\onN(\nU_{n,k})\cong\vnk{n}{k}.
\end{equation}
Note that $\vnk{n}{k}$ is a full simplex when $k\geq \lfloor n/2 \rfloor$.

The $1$-skeleton $\cnk{n}{k}^{(1)}=\vnk{n}{k}^{(1)}$ is the graph commonly denoted as $C_n^k$, the \emph{$k$-th distance power of the cycle $C_n$}. In this graph the neighborhood of vertex $i$ is $[i-k,i-1]_n\cup[i+1,i+k]_n$. The homotopy types of $\cl(C_n^k)=\vnk{n}{k}$ were determined by the first author.

\begin{theorem}[Adamaszek, {\cite[Corollary~6.7]{Adamaszek2013}}]\label{thm:cor6.7}
Let $ 0 \le k < n/2$. Then
\[ \vnk{n}{k} \simeq
\begin{cases}
\bigvee^{n-2k-1} S^{2l} & \mbox{if } \frac{k}{n} = \frac{l}{2l+1} \\
S^{2l+1} & \mbox{if } \frac{l}{2l+1} < \frac{k}{n} < \frac{l+1}{2l+3} 
\end{cases} 
\mbox{ for some }l \ge 0.\]
\end{theorem}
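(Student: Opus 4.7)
The plan mimics the strategy used for $\cnk{n}{k}$ in Proposition~\ref{prop:cnk_susp} and Theorem~\ref{thm:cnk}: establish a double-suspension recursion together with base cases, then iterate.

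For the base cases, $k = 0$ gives $\vnk{n}{0}$ as $n$ isolated vertices, matching the formula at $l = 0$. For $1 \le k$ with $k/n < 1/3$, I would show every clique of $C_n^k$ is contained in some discrete arc of length $k$: by cyclic symmetry assume $0 \in \sigma$, so every other vertex of $\sigma$ lies in $[0,k]_n \cup [n-k,n-1]_n$; if both halves are nonempty, a pair $v \in [1,k]_n$ and $w \in [n-k,n-1]_n$ has $w - v \ge n - 2k > k$, forcing the pairwise clique condition to instead impose $v + (n - w) \le k$, which places $\sigma$ inside the arc $[w^\ast, v^\ast]_n$ for the extremal choices. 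Hence $\vnk{n}{k} = \cnk{n}{k}$, and the ``circle'' regime of Section~3 gives $\vnk{n}{k} \simeq S^1$.

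The main technical step is a suspension recursion
\[ \vnk{n}{k} \simeq \Sigma^2\, \vnk{4k-n}{3k-n} \qquad \text{whenever } n/3 \le k < n/2, \]
to be proved by covering $\vnk{n}{k}$ with two contractible subcomplexes whose intersection in turn decomposes as a union of two contractible pieces meeting in an isomorphic copy of $\vnk{4k-n}{3k-n}$; two applications of Lemma~\ref{lem:union} then yield the double suspension. Arithmetic checks verify that the map $k/n \mapsto (3k-n)/(4k-n)$ sends $l/(2l+1)$ to $(l-1)/(2l-1)$ and the open interval $\bigl(l/(2l+1),(l+1)/(2l+3)\bigr)$ to $\bigl((l-1)/(2l-1),l/(2l+1)\bigr)$, while $n - 2k = (4k-n) - 2(3k-n)$ is invariant, preserving the wedge count $n-2k-1$. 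Iterating $l$ times reaches a base case and yields $\Sigma^{2l} S^1 = S^{2l+1}$ in the open-interval case, or $\Sigma^{2l}\bigl(\bigvee^{n-2k-1} S^0\bigr) = \bigvee^{n-2k-1} S^{2l}$ on the boundary $k/n = l/(2l+1)$.

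The principal obstacle is explicitly constructing the two-piece cover in the suspension step. For $\cnk{n}{k}$ this worked because the maximal simplices are arcs that split via two apex vertices ($k$ and $n-1$ in the proof of Proposition~\ref{prop:cnk_susp}). The maximal simplices of $\vnk{n}{k}$ are maximal cliques of $C_n^k$, and once $k/n \ge 1/3$ some of these wrap around the circle and fail to embed in any arc, so one must carefully understand how the wrapping cliques interact with the chosen cover and certify contractibility of each piece by internal domination arguments in the spirit of the inner collapse inside the proof of Proposition~\ref{prop:cnk_susp}. Adamaszek's proof of Corollary~6.7 in \cite{Adamaszek2013} takes a different route, via a winding-number filtration and dismantlability on auxiliary graphs, so the suspension recursion suggested here would require genuinely new combinatorial work.
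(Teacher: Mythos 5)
The load-bearing step of your proposal, the double-suspension recursion $\vnk{n}{k}\simeq \Sigma^2\,\vnk{4k-n}{3k-n}$ for $n/3 \le k < n/2$, is stated but never established, and you yourself flag the obstacle without resolving it: once $k/n \ge 1/3$ the maximal cliques of $C_n^k$ need not lie inside any arc of length $k$, so the two-cone cover used for $\cnk{n}{k}$ in the proof of Proposition~\ref{prop:cnk_susp} (pieces coned off at the vertices $k$ and $n-1$) does not carry over. A wrapping clique such as $\{0,k,2k,\dots\}$ has no reason to fall entirely into one piece, and neither piece has an evident apex. Your base cases ($k=0$, and $\vnk{n}{k}=\cnk{n}{k}$ for $1\le k$ with $k/n<1/3$, via the argument that every clique of $C_n^k$ sits in an arc of length $k$) are correct, and the arithmetic of the recursion is consistent with the claimed homotopy types, but as written the proposal is a programme with the hard combinatorics absent.

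You should also note that the paper supplies its own self-contained proof of this theorem, and it sidesteps your obstacle entirely rather than confronting it. Theorem~\ref{thm:vnk_cnk} exhibits the explicit simplicial map $f\colon \vnk{n+k}{k}\to\cnk{n}{k}$, $f(i)=i\bmod n$, and proves it is a homotopy equivalence by verifying that the preimage of every simplex is a cone (Barmak's simplicial analogue of Quillen's Theorem~A). That argument is independent of any prior knowledge of homotopy types, so combined with Theorem~\ref{thm:cnk} it yields the statement immediately: writing $m=n+k$, one has $\tfrac{k}{m}=\tfrac{l}{2l+1}$ iff $\tfrac{k}{m-k}=\tfrac{l}{l+1}$, and the wedge count $(m-k)-k-1 = m-2k-1$ matches. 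All the suspension work is done once, on the nerve side where maximal simplices are arcs, and then transferred to the clique side by a single clean map on a larger vertex set --- a much lighter lift than proving a suspension recursion directly for $\vnk{n}{k}$, whose cliques wrap around the circle. If you want to salvage your route you must actually construct the two contractible pieces and control their intersection; otherwise, invoke the cross-map of Theorem~\ref{thm:vnk_cnk} instead.
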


\begin{example}\label{ex:vnk93}
The clique complex $\vnk{9}{3}$ is homotopy equivalent to $\bigvee^2 S^2$. To visualize this, note that $\vnk{9}{3}$ has nine maximal 3-simplices and three maximal 2-simplices. Let $Y \simeq S^1$ be the union of the nine maximal 3-simplices. The three maximal 2-simplices are glued along their boundaries to $Y$ by maps homotopic to the identity of $S^1$, giving $\vnk{9}{3} \simeq \bigvee^2 S^2$.
\begin{figure}[h]
	\begin{center}
    	\includegraphics[width=3.5in]{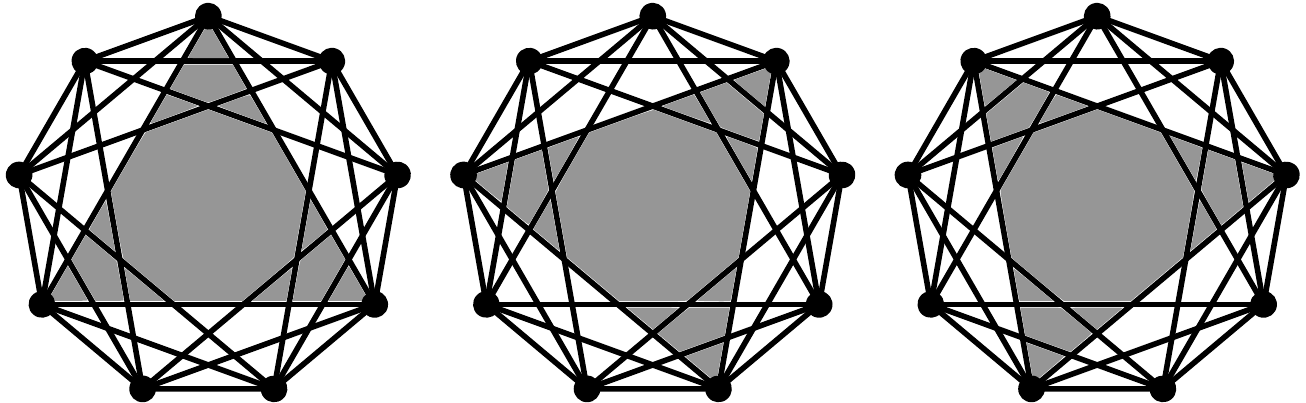}
	\end{center}
	\caption{The three maximal 2-simplices of $\vnk{9}{3}$.}
	\label{fig:vnk93}
\end{figure}
\end{example}

\begin{remark}
Note that for $0\leq k<n/2$ the clique complexes $\vnk{n}{k}$ go through their entire range of homotopy types, while the nerve complexes $\cnk{n}{k}$ remain homotopy equivalent to $S^1$. By the time when the $\cnk{n}{k}$ attain interesting homotopy types, that is when $k\geq n/2$, the clique complexes $\vnk{n}{k}$ have already become contractible.
\end{remark}

A careful comparison of Theorem~\ref{thm:cor6.7} and Theorem~\ref{thm:cnk} reveals that there is a homotopy equivalence $\vnk{n+k}{k} \simeq \cnk{n}{k}$. In the next theorem we show that this equivalence is realized by a rather surprising map. The proof of Theorem~\ref{thm:vnk_cnk} is independent of the knowledge of the homotopy types of $\cnk{n}{k}$ and $\vnk{n+k}{k}$, so combining it with Theorem~\ref{thm:cnk} gives an alternate proof of Theorem~\ref{thm:cor6.7}, which we regard as simpler and more self-contained.

\begin{theorem}\label{thm:vnk_cnk}
Let $n \ge 1$ and $k \ge 0$. The assignment $f \colon \{0,\dots,n+k-1\}\to\{0,\dots,n-1\}$ via $f(i)=i\md n$ determines a simplicial, surjective homotopy equivalence
\[ f \colon \vnk{n+k}{k} \xrightarrow{\simeq} \cnk{n}{k}. \]
\end{theorem}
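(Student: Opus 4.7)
The plan is to verify simpliciality, surjectivity, and homotopy equivalence in turn.

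\emph{Simpliciality and surjectivity.} When $n\geq k+1$ one has $n+k\geq 2k+1$, so every clique of $C_{n+k}^{k}$ is contained in an arc of $k+1$ consecutive vertices of $\Z/(n+k)$; in particular $\vnk{n+k}{k}$ coincides with $\cnk{n+k}{k}$, whose maximal simplices are the arcs $[i,i+k]_{n+k}$. Since $k<n$, the $k+1$ integers $i,i+1,\dots,i+k$ remain pairwise distinct modulo $n$, and their image $[i\md n,\,(i+k)\md n]_{n}$ is a maximal simplex of $\cnk{n}{k}$, so $f$ is simplicial. The degenerate range $n\leq k$ makes both sides full simplices and is trivial. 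Surjectivity is immediate from $f|_{\{0,\dots,n-1\}}=\mathrm{id}$.

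\emph{Homotopy equivalence.} The plan is to apply the standard simplicial version of Quillen's Theorem A: a simplicial map $g\colon K\to L$ is a homotopy equivalence provided that for every simplex $\tau\in L$ the induced subcomplex $g^{-1}(\bar\tau)=K\bigl[\{v\in V(K):g(v)\in\tau\}\bigr]$ is contractible. For the map $f$ I will show that each such fibre is in fact a cone. Since $\vnk{n+k}{k}$ is the clique complex of $C_{n+k}^{k}$, it suffices to produce an apex $v^{*}\in f^{-1}(\tau)$ that is $C_{n+k}^{k}$-adjacent to every other vertex of
\[
f^{-1}(\tau)=\tau\cup\{t+n:t\in\tau,\,t<k\}\subseteq\Z/(n+k);
\]
such a $v^{*}$ then lies in every maximal clique of $f^{-1}(\bar\tau)$.

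Choose an arc $[a,a+k]_{n}$ containing $\tau$. If $\tau$ fits in some non-wrapping arc (so that one may take $a+k<n$), let $a$ be the leftmost element of $\tau$ in the arc and set $v^{*}=a$. Otherwise every containing arc wraps, in which case both $\tau^{\mathrm{high}}=\tau\cap\{a,\dots,n-1\}$ and $\tau^{\mathrm{low}}=\tau\cap\{0,\dots,a+k-n\}$ are nonempty; here I take $v^{*}=b'+n$ with $b'=\max\tau^{\mathrm{low}}$. In either case $v^{*}\in f^{-1}(\tau)$, and a direct computation of cyclic distances in $\Z/(n+k)$ yields $d_{n+k}(v^{*},w)\leq k$ for every $w\in f^{-1}(\tau)$.

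The main obstacle is the bookkeeping in the wrapping case; in particular when $a<k$, which occurs only for $n<2k$, the elements of $\tau\cap\{a,\dots,k-1\}$ acquire extra duplicates in $\{n+a,\dots,n+k-1\}$ whose adjacency to $v^{*}=b'+n$ must be verified separately. Each such check reduces to an elementary inequality of the form $b'+n-w\leq k$ or $k-b'+w\leq k$ on cyclic distances. Once contractibility of every fibre is established, Quillen's Theorem A delivers the desired homotopy equivalence.
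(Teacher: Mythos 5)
Your overall plan mirrors the paper's: reduce to the non-degenerate case, verify simpliciality and surjectivity, then apply Barmak's simplicial Quillen Theorem A by exhibiting each fibre $f^{-1}(\tau)$ as a cone. The homotopy-equivalence half of your argument is essentially sound; your apex $v^{*}=b'+n$ with $b'=\max\tau^{\mathrm{low}}$ is a slight repackaging of the paper's cases (which pick $w=i_q$ or $w=i_s+n$ depending on whether the starting vertex of the containing arc is some $i_q<k$ or $j_1\geq k$). Both choices work; the verification you defer is of comparable length.

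However, the simpliciality step contains a genuine error. You assert that $n+k\geq 2k+1$ forces every clique of $C_{n+k}^{k}$ to lie in an arc of $k+1$ consecutive vertices of $\Z/(n+k)$, and hence that $\vnk{n+k}{k}=\cnk{n+k}{k}$. This is false in the range $k+1\leq n\leq 2k$. For example with $n=4$, $k=2$ (so $n+k=6\geq 2k+1=5$), the set $\{0,2,4\}$ has all pairwise cyclic distances equal to $2$ in $\Z/6$, so it is a clique of $C_6^2$ and a simplex of $\vnk{6}{2}$, but it lies in no $3$-element arc, so it is not a simplex of $\cnk{6}{2}$. The correct threshold for $\vnk{m}{k}=\cnk{m}{k}$ is $m\geq 3k+1$, i.e.\ $n\geq 2k+1$, not $n\geq k+1$. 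Relatedly, your ``degenerate range'' should be $n\leq k+1$, not $n\leq k$: for $n=k+1$ one still has $\vnk{2k+1}{k}$ a full simplex (since $C_{2k+1}^{k}=K_{2k+1}$) while $\cnk{2k+1}{k}\neq\vnk{2k+1}{k}$. In the interval $k+2\leq n\leq 2k$ your argument breaks and you must verify simpliciality for genuine cliques directly, not just for the arcs $[i,i+k]_{n+k}$. The paper does this by taking $t=\min(\sigma)$ and showing $f(\sigma)\subseteq[t,t+k]_n$, an argument which does not presuppose that $\sigma$ itself sits in a discrete arc. Since Quillen's Theorem A requires $f$ to be simplicial, this gap affects the validity of the entire proof as written, though it is straightforwardly repairable.
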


\begin{proof}
If $k \ge n-1$, then both $\vnk{n+k}{k}$ and $\cnk{n}{k}$ are simplices, and hence it suffices to consider the case $k \le n-2$. We recall that if $\sigma\in\vnk{n+k}{k}$ and $t\in\sigma$ is an arbitrary vertex then $\sigma\subset[t-k,t+k]_{n+k}$.

We first verify that $f$ is a simplicial map. 
Let $\sigma$ be any simplex in $\vnk{n+k}{k}$ and set $t = \min(\sigma)$. If $t \ge k$ then $\sigma \subseteq [t, t+k]_{n+k}$, because $\sigma\cap[t-k,t-1]_{n+k}\subset\sigma\cap[0,t-1]_{n+k}=\emptyset$. After applying $f$ we have  $f(\sigma) \subseteq [t, t+k]_n$. If $t < k$, then we have
\begin{align*}
\sigma &\subseteq [t, t+k]_{n+k} \cup [t-k \md (n+k), n+k-1]_{n+k} \\
&= [t, t+k]_{n+k} \cup [t+n, n+k-1]_{n+k}.
\end{align*}
Applying map $f$ gives
\[ f(\sigma) \subseteq [t, t+k]_n \cup [t, k-1]_n \subseteq [t, t+k]_n. \]
In each case we have that $f(\sigma)$ is a face of $\cnk{n}{k}$. To verify surjectivity, note that for $0 \le i < n$ we have $f([i, i+k]_{n+k}) = [i, i+k]_n$, and so each maximal simplex of $\cnk{n}{k}$ is in the image of $f$.

It remains to show that $f$ is a homotopy equivalence. We use a simplicial variant of Quillen's Theorem A due to Barmak \cite[Theorem~4.2]{BarmakTheoremA2011}, which states that if $f$ is a simplicial map such that the preimage of every simplex is contractible, then $f$ is a (simple) homotopy equivalence. Consider an arbitrary simplex
\[ \tau=\{i_1,\dots,i_s\}\cup\{j_1,\dots,j_t\} \]
in $\cnk{n}{k}$, with
\[ 0\le i_1 < \cdots < i_s< k \le j_1 < \cdots < j_t \le n-1. \]
The preimage $f^{-1}(\tau)$ is the subcomplex of $\vnk{n+k}{k}$ induced by the vertex set
\[ V(f^{-1}(\tau)) = \{i_1, \dots, i_s\} \cup \{j_1, \dots, j_t\} \cup \{i_1+n, \dots, i_s+n\}. \]
We will show the slightly stronger statement that $f^{-1}(\tau)$ is a cone. For this it suffices to find a vertex $w\in f^{-1}(\tau)$ such that $f^{-1}(\tau)\subset [w-k,w+k]_{n+k}$, since then $w$ is adjacent, in the $1$-skeleton, to all vertices of $f^{-1}(\tau)$. 

\begin{figure}[h!]
	\begin{tabular}{cc}
	\includegraphics[scale=0.8]{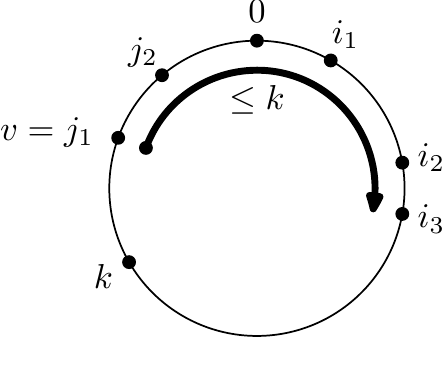} & \includegraphics[scale=0.8]{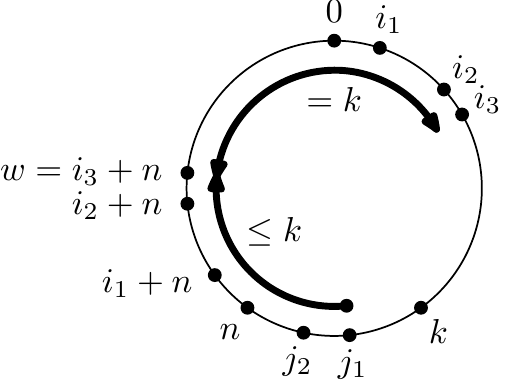}
	\end{tabular}
	\caption{\label{fig:preimage}The vertex set of the simplex $\tau$ (left, on a cycle of length $n$) and the vertex set of $f^{-1}(\tau)$ (right, on a cycle of length $n+k$). The arcs form a graphical representation of the proof in case (iii). The remaining parts are similar.}
\end{figure}

Choose any vertex $v\in\tau$ such that $\tau\subset [v,v+k]_n$. We have the following cases (see Figure~\ref{fig:preimage}).
\begin{itemize}
\item[(i)] If $v = i_q$ for some $1\le q\le s$, then take $w=i_q$. We have
\begin{align*}
\{i_q+n, \ldots, i_s+n\} \cup \{i_1, \ldots, i_q\} &\subseteq [i_q-k,i_q]_{n+k} \\
\{i_q, \ldots, i_s\} \cup \{j_1, \ldots, j_t\} \cup \{i_1+n, \ldots, i_{q-1}+n\} &\subseteq [i_q,i_q+k]_{n+k}.
\end{align*}
\item[(ii)] If $v = j_1$ and $s=0$, then take $w=j_1$. We have $\{j_1, \ldots, j_t\} \subseteq [j_1,j_1+k]_{n+k}$.
\item[(iii)] If $v = j_1$ and $s>0$, then take $w=i_s+n$. We have
\begin{align*}
\{j_1, \ldots, j_t\} \cup \{i_1+n, \ldots, i_s+n\} &\subseteq [i_s+n-k,i_s+n]_{n+k} \\
\{i_1, \ldots, i_s\} &\subseteq [i_s+n,i_s+n+k]_{n+k}.
\end{align*}
\item[(iv)] The remaining case, $v = j_q$ with $q\ge 2$, is impossible, as $j_{q-1}\not\in[j_q,j_q+k]_n$.
\end{itemize}
This completes the proof.
\end{proof}

\begin{remark}
The typical nesting between \u Cech and Vietoris--Rips complexes, for example in Carlsson \cite[Proposition~2.6]{Carlsson2009}, here takes the form
\[
\genfrac{}{}{0pt}{0}{\phantom{\subseteq}\cech\bigl(X_n,S^1;\frac{k}{2n}\bigr)\subseteq}{{\overset{\mkern4mu\verteq}{\cnk{n}{k}}}}
\genfrac{}{}{0pt}{0}{\vr\bigl(X_n,\frac{k}{n}\bigr)}{{\overset{\mkern4mu\verteq}{\vnk{n}{k}}}}
\genfrac{}{}{0pt}{0}{\subseteq\cech\bigl(X_n,S^1;\frac{k}{n}\bigr).\phantom{\subseteq}}{{\overset{\mkern4mu\verteq}{\cnk{n}{2k}}}}
\]
These inclusions relate \u Cech and Vietoris--Rips complexes on the same vertex set. The map giving the homotopy equivalence in Theorem~\ref{thm:vnk_cnk} is not an inclusion of this form: in particular it relates Vietoris--Rips complex $\vr(X_{n+k},\frac{k}{n+k})$ and ambient \u Cech complex $\cech(X_n,S^1;\frac{k}{2n})$ on vertex sets of different sizes.
\end{remark}

\section{Nerve and clique complexes of arbitrary circular arcs}\label{sec:arbitrary}

So far we analyzed the spaces $\nN(\nU)$ and $\onN(\nU)$ when $\nU=\nU_{n,k}$ is an evenly-spaced configuration of arcs in \eqref{eq:evenly-spaced}. In this section we prove that for an \emph{arbitrary} finite collection of arcs $\nU$ in $S^1$, complexes $\nN(\nU)$ and $\onN(\nU)$ are homotopy equivalent to a point, an odd-dimensional sphere, or a wedge sum of spheres of the same even dimension. This applies, in particular, to the ambient \u Cech complex $\cech(X,S^1;r)$ and Vietoris--Rips complex $\vr(X,r)$ of any finite subset $X\subset S^1$. We achieve this by showing that successively removing dominated vertices from any complex $\nN(\nU)$ (resp.\ $\onN(\nU)$) produces a complex isomorphic to some $\cnk{n}{k}$ (resp.\ $\vnk{n}{k}$), at which point the homotopy type can be read off from Theorem~\ref{thm:cnk} or \ref{thm:cor6.7}. If $\nU$ has $n$ arcs, then this reduction procedure, and therefore the computation of the homotopy type of $\nN(\nU)$ or $\onN(\nU)$, can be computed in time $O(n\log n)$, or in time $O(n)$ if the endpoints of the arcs are given in cyclic order. It follows that for $X \subset S^1$ of size $n$, the homotopy type of $\cech(X,S^1;r)$ or $\vr(X,r)$ can be computed in time $O(n\log n)$.

First we introduce additional notation. For $x_1, \dots, x_k \in S^1$, we write $(x_1 \preceq \dots \preceq x_k \preceq x_1)$ if the points $x_1, \dots, x_k$ are ordered in a clockwise fashion (allowing equality). We replace ``$x_i \preceq x_{i+1}$" with ``$x_i \prec x_{i+1}$" if furthermore $x_i \neq x_{i+1}$. Given two closed arcs $U=[a,b]_{S^1}$ and $U'=[a',b']_{S^1}$ with $a,b,a',b' \in S^1$, we write $U \preceq U'$ if $(a \preceq a' \preceq b \preceq b' \prec a)$.

\begin{lemma}\label{lem:dominatedArc}
Let $\nU = \{U_i = [a_i,b_i]_{S^1}~|~i=0, \dots, n-1\mbox{ and }a_i,b_i \in S^1\}$ be a collection of $n$ closed circular arcs. If some $i \neq j$ satisfies
\begin{enumerate}
\item[(a)] $U_i \subseteq U_j$,
\item[(b)] $U_i \preceq U_j$ and $b_k \notin [a_i, a_j)_{S^1}$ for all $k$, or
\item[(c)] $U_j \preceq U_i$ and $a_k \notin (b_j, b_i]_{S^1}$ for all $k$,
\end{enumerate}
then vertex $i$ is dominated by vertex $j$ in $\nN(\nU)$.
\end{lemma}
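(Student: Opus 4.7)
The plan is to unpack the domination condition. Domination of $i$ by $j$ means that for every simplex $\sigma\in\nN(\nU)$ with $i\in\sigma$, the intersection $I := \bigcap_{s\in\sigma}U_s$ meets $U_j$. I would dispatch the three cases separately. Case (a) is immediate: $I\subseteq U_i\subseteq U_j$ gives $I\cap U_j = I\neq\emptyset$.

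For case (b), the plan is to argue by contradiction, pinpointing the endpoint $a_j$ as a forced witness. Unwinding $U_i\preceq U_j$ as $(a_i\preceq a_j\preceq b_i\preceq b_j\prec a_i)$ yields $U_i\cap U_j=[a_j,b_i]_{S^1}$, $a_j\in U_i$, and (assuming $a_i\neq a_j$, else case (a) applies) $U_i\setminus U_j=[a_i,a_j)_{S^1}$. Suppose $I\cap U_j=\emptyset$; then $I\subseteq [a_i,a_j)_{S^1}$. Pick any $p\in I$. I claim $a_j\in U_k$ for each $k\in\sigma$: this is direct for $k=i$, and for $k\neq i$ the hypothesis $b_k\notin[a_i,a_j)_{S^1}$ places $b_k$ in the complementary half-open arc $[a_j,a_i)_{S^1}$. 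Following $U_k=[a_k,b_k]_{S^1}$ clockwise from $p$ to $b_k$ (a subpath of $U_k$), the endpoint $a_j$ lies strictly clockwise from $p$ and weakly counterclockwise from $b_k$, so it must be traversed, giving $a_j\in U_k$. Thus $a_j\in I$, contradicting $a_j\notin[a_i,a_j)_{S^1}$. Case (c) then follows by reversing the orientation of $S^1$, which swaps $a_\bullet\leftrightarrow b_\bullet$ and turns hypothesis (c) into hypothesis (b), yielding the witness $b_j\in I\cap U_j$.

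I do not anticipate a conceptual obstacle; the only thing requiring discipline is the half-open endpoint bookkeeping, particularly degenerate configurations such as $a_i=a_j$ (which collapses (b) to (a)) or $b_k=a_j$ (where the clockwise subpath terminates exactly at $a_j$). Committing to the specific witnesses $a_j$ in (b) and $b_j$ in (c) handles these uniformly through the cyclic-order inclusion that defines the complementary half-open arc.
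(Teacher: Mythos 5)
Your proof is correct and takes essentially the same route as the paper's: suppose $U_\sigma\cap U_i$ avoids $U_j$, so it sits inside $[a_i,a_j)_{S^1}$, and then use the hypothesis that no closing endpoint $b_k$ lies in $[a_i,a_j)_{S^1}$ to force $a_j$ into $U_\sigma\cap U_i$, a contradiction. You merely make explicit the clockwise-traversal step that the paper leaves to the reader, and case~(c) is handled by the same orientation-reversal symmetry in both arguments.
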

\begin{proof}
Suppose $\sigma \in \lk_{\nN(\nU)}(i)$ and let $U_\sigma = \bigcap_{k \in \sigma}U_k$, so that $U_\sigma \cap U_i \neq \emptyset$. We claim $U_\sigma \cap U_i \cap U_j \neq \emptyset$. Case (a) is clear. In case (b), suppose for a contradiction that $U_\sigma \cap U_i \subseteq [a_i, a_j)_{S^1}$. But then $b_k \notin [a_i, a_j)_{S^1}$ for all $k \in \sigma \cup \{i\}$ gives $a_j \in U_\sigma \cap U_i$, a contradiction. Case (c) follows by symmetry.
\end{proof}

\begin{figure}[h!]
\includegraphics[scale=0.7]{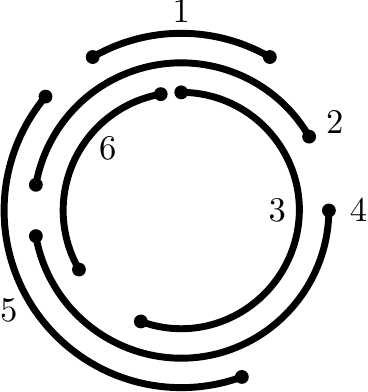}
\caption{\label{fig:arcs-example}Example: a collection of six arcs. In the nerve $1$ is dominated by $2$ as per Lemma~\ref{lem:dominatedArc}(a) and $4$ is dominated by $5$ as per Lemma~\ref{lem:dominatedArc}(b). The nerve of the subcollection $\{2,3,5,6\}$ is isomorphic to $\cnk{4}{1}$.}
\end{figure}

Recall that a simplicial complex is \emph{minimal} if it contains no dominated vertices.

\begin{proposition}\label{prop:minimalArc}
Let $\nU$ be a nonempty finite collection of arcs in $S^1$. If $\nN(\nU)$ is minimal, then there is an isomorphism $\nN(\nU) \cong \cnk{n}{k}$ for some $0 \le k < n$. 
\end{proposition}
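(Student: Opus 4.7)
The plan is to use Lemma~\ref{lem:dominatedArc} to translate minimality into rigid combinatorial constraints on the endpoint configuration $\nU = \{U_i = [a_i, b_i]_{S^1}\}_{i=0}^{n-1}$, and then read off $\nN(\nU) \cong \cnk{n}{k}$ directly. The case $n = 1$ is trivial, so I assume $n \ge 2$.

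\emph{No containments and matching cyclic orders.} Failure of Lemma~\ref{lem:dominatedArc}(a) means no arc contains another; in particular no $U_i$ equals $S^1$. I would relabel so that $a_0 \prec a_1 \prec \cdots \prec a_{n-1} \prec a_0$ cyclically. No containment then forces the right endpoints to appear in the same cyclic order: any reversal $b_j \preceq b_i$ with $a_i \prec a_j$ would yield $U_j \subseteq U_i$.

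\emph{Alternation.} Let $d_i$ count the $b_j$'s in $(a_i, a_{i+1})_{S^1}$. If $U_i \cap U_{i+1} = \emptyset$, then $b_i \in (a_i, a_{i+1})_{S^1}$ and $d_i \ge 1$. Otherwise $U_i$ and $U_{i+1}$ overlap, and from the previous step one checks that $U_i \preceq U_{i+1}$ (the reverse would force a containment); the failure of Lemma~\ref{lem:dominatedArc}(b) then yields some $b_k \in [a_i, a_{i+1})_{S^1}$, again giving $d_i \ge 1$. Since $\sum d_i \le n$, each $d_i$ equals $1$, so the $2n$ endpoints alternate around $S^1$. Combined with the matching cyclic order, the bijection $i \mapsto \pi(i)$ defined by $b_{\pi(i)} \in (a_i, a_{i+1})_{S^1}$ is a cyclic shift, and hence every arc $U_i$ contains exactly the same number $k+1$ of consecutive left endpoints $a_i, a_{i+1}, \ldots, a_{i+k}$ for some fixed $k$.

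\emph{Identifying the nerve.} For any $\{i_0, \ldots, i_m\} \subseteq [r-k, r]_n$ the point $a_r$ lies in every $U_{i_j}$, giving $\cnk{n}{k} \subseteq \nN(\nU)$. Conversely, a common point $p \in \bigcap_j U_{i_j}$ lies in some half-open segment $[a_r, a_{r+1})_{S^1}$, which is split into three regions by the unique right endpoint $b_{r-k} \in (a_r, a_{r+1})_{S^1}$; a short case check in each region forces every $i_j$ into $\{r-k, \ldots, r\}$, giving the reverse inclusion.

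The main technical nuisance I foresee is handling non-generic configurations: endpoint coincidences ($a_i = b_j$ for distinct $i,j$) and arcs that wrap far enough around the circle that the relation $\preceq$ on consecutive arcs is not as expected. I expect to resolve these by a careful tracking of $\preceq$ versus $\prec$ throughout the alternation step, invoking Lemma~\ref{lem:dominatedArc}(c) as a symmetric companion to~(b) whenever~(b) alone is awkward.
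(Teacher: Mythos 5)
Your overall route matches the paper's: reduce to closed arcs with distinct endpoints, rule out containments via Lemma~\ref{lem:dominatedArc}(a), deduce that the $b_i$'s are cyclically ordered consistently with the $a_i$'s, force a closing endpoint into each gap $(a_i,a_{i+1})_{S^1}$ by a counting (pigeonhole) argument, and read off $\nN(\nU)\cong\cnk{n}{k}$. The one place your reasoning does not quite go through as written is the claim, in the ``overlap'' branch, that $U_i\preceq U_{i+1}$ must hold (with the parenthetical that ``the reverse would force a containment''). That dichotomy is false: when both arcs are long enough to meet in two components, the cyclic order of the four endpoints is $a_i, b_{i+1}, a_{i+1}, b_i$, and then \emph{neither} $U_i\preceq U_{i+1}$ nor $U_{i+1}\preceq U_i$ holds, yet there is no containment. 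So ``one checks that $U_i\preceq U_{i+1}$'' is not available here, and your appeal to Lemma~\ref{lem:dominatedArc}(b) in this sub-case is vacuous. You did anticipate that long wrapping arcs might break $\preceq$, but the cure is not Lemma~(c) (the problem is not an orientation asymmetry); rather, notice that in exactly that bad configuration one already has $b_{i+1}\in(a_i,a_{i+1})_{S^1}$, so $d_i\ge 1$ without any lemma.

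The paper sidesteps this by organizing the alternation step as a direct contradiction: \emph{assume} no closing endpoint lies in $(a_i,a_{i+1})_{S^1}$, and then show that $U_i\preceq U_{i+1}$ is forced (the other relative positions of $b_i,b_{i+1}$ would put a $b$ in the gap or produce a containment), so Lemma~\ref{lem:dominatedArc}(b) kills minimality. If you reorganize your $d_i\ge1$ argument around the hypothesis ``$d_i=0$'' rather than around ``$U_i\cap U_{i+1}$ empty vs.\ nonempty,'' the troublesome overlap sub-case disappears and the rest of your proof (the cyclic-shift $\pi$, the uniform $k+1$, and the identification of maximal simplices) is correct and is essentially the paper's argument.
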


\begin{proof}
We claim that we may restrict to the case where $\nU = \{U_i = [a_i,b_i]_{S^1}~|~i=0, \dots, n-1\mbox{ and }a_i,b_i \in S^1\}$ is a collection of $n$ closed circular arcs with all endpoints distinct, meaning $a_i \neq a_j$, $b_i \neq b_j$, and $a_i \neq b_j$ for all $i \neq j$, and also $a_i \neq b_i$ for all $i$. This is because for any collection $\nU'$ of possibly open, half-open, or closed arcs, there exists a collection $\nU$ of closed arcs with distinct endpoints such that $\nN(\nU) \cong \nN(\nU')$. 

Without loss of generality, order the arcs of $\nU$ so that $(a_0 \prec \dots \prec a_{n-1} \prec a_0)$. No arc contains another by Lemma~\ref{lem:dominatedArc}(a), and it follows that $(b_0 \prec \dots \prec b_{n-1} \prec b_0)$. We refer to the $a_i$ as \emph{opening} endpoints and to the $b_i$ as \emph{closing} endpoints. We claim that when cyclically ordered, the set of all endpoints must alternate between opening endpoints and closing endpoints. Suppose for a contradiction that there were no closing endpoint between $a_i$ and $a_{i+1\md n}$. Then by Lemma~\ref{lem:dominatedArc}(b), vertex $i+1\md n$ would dominate vertex $i$, a contradiction. Since the number of opening endpoints is equal to the number of closing endpoints, there must also be an opening endpoint between each $b_i$ and $b_{i+1\md n}$. It follows that there is some constant $0 \leq k<n$ with
\[ (a_0 \prec b_{-k\md n} \prec a_1 \prec \ldots \prec a_i \prec b_{i-k\md n} \prec a_{i+1} \prec \ldots \prec a_{n-1} \prec b_{n-1-k\md n} \prec a_0).\]
The maximal simplices of $\nN(\nU)$ are given by the nonempty intersections $U_i \cap \ldots \cap U_{i+k\md n}$ for $i = 0, \ldots, n-1$, and hence $\nN(\nU) \cong \cnk{n}{k}$.
\end{proof}

The following lemma allows us to extend the result to clique complexes.

\begin{lemma}\label{lem:dominatedClique}
If $v$ is dominated by $v'$ in $K$, then $v$ is dominated by $v'$ in $\cl(K^{(1)})$.
\end{lemma}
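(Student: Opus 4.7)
The plan is to unpack both definitions and check the condition directly at the level of edges. Recall that $v$ is dominated by $v'$ in $K$ means: for every simplex $\sigma \in K$ with $v \in \sigma$, we also have $\sigma \cup \{v'\} \in K$. I must verify the same implication for simplices $\sigma \in \cl(K^{(1)})$.

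First I would record the edge-level consequence of domination in $K$. Applying the hypothesis to $\sigma = \{v\}$ gives $\{v, v'\} \in K$, and applying it to each edge $\{v,w\} \in K$ gives $\{v, w, v'\} \in K$, hence in particular $\{w, v'\} \in K^{(1)}$. So every neighbor of $v$ in the graph $K^{(1)}$ is also a neighbor of $v'$ in $K^{(1)}$, and $v'$ itself is a neighbor of $v$.

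Now let $\sigma \in \cl(K^{(1)})$ be any simplex containing $v$; by definition $\sigma$ is a clique in $K^{(1)}$, i.e.\ every pair of distinct vertices of $\sigma$ forms an edge of $K$. I want to show $\sigma \cup \{v'\} \in \cl(K^{(1)})$, which amounts to checking that $v'$ is adjacent in $K^{(1)}$ to every vertex of $\sigma \setminus \{v'\}$. For any such $w$, the edge $\{v,w\}$ lies in $K$ (since $\{v,w\} \subseteq \sigma$ is a clique), so by the previous paragraph $\{w,v'\} \in K^{(1)}$; and the case $w = v$ is handled by $\{v,v'\} \in K^{(1)}$.

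There is no real obstacle here — the argument is a one-line translation between the two notions of simplex — so the only care needed is bookkeeping the trivial cases $w = v$ and $v' \in \sigma$.
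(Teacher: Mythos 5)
Your proof is correct and takes essentially the same approach as the paper: both arguments reduce the claim to the edge level by observing that domination in $K$ implies every neighbor of $v$ in $K^{(1)}$ is a neighbor of $v'$, and then use that membership in $\cl(K^{(1)})$ is determined entirely by the 1-skeleton. The paper phrases the step in terms of ``the complete graph on $\sigma\cup\{v,v'\}$ lying in $K$,'' which is just a terser packaging of the same bookkeeping you carry out.
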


\begin{proof}
Suppose $\sigma \in \lk_{\cl(K^{(1)})}(v)$, hence the complete graph on vertex set $\sigma \cup \{v\}$ is in $K$. Since $\lk_K(v)$ is a cone with apex $v'$, the complete graph on vertex set $\sigma \cup \{v,v'\}$ is also in $K$, giving $\sigma \cup \{v'\} \in \lk_{\cl(K^{(1)})}(v)$.
\end{proof}

Now we can prove the main result of this section.

\begin{theorem}\label{thm:arbitraryNerve}
Let $\nU$ be a nonempty finite collection of arcs in $S^1$. Then there exist integers $n\geq 1$, $k\geq 0$ such that $\nN(\nU)\simeq \cnk{n}{k}$ and $\onN(\nU)\simeq \vnk{n}{k}$. 

In particular, $\nN(\nU)$ and $\onN(\nU)$ have the homotopy type of a point, an odd-dimensional sphere, or a wedge sum of spheres of the same even dimension.
\end{theorem}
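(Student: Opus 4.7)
The plan is to reduce the problem to the evenly-spaced case by iterated elementary strong collapses, and then to invoke Theorems~\ref{thm:cnk} and \ref{thm:cor6.7}. First I would normalize: as observed at the start of the proof of Proposition~\ref{prop:minimalArc}, any finite collection of arcs may be replaced by a collection of closed arcs with pairwise distinct endpoints having the same nerve, so I assume $\nU$ is in this form.

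Next, iterate the following reduction: while $\nN(\nU)$ contains a dominated vertex $v$, delete the corresponding arc $U_v$ from $\nU$ to obtain $\nU'$. Two facts make this work simultaneously for $\nN$ and $\onN$. On one hand, the identities $\nN(\nU') = \nN(\nU) \setminus \{v\}$ and $\onN(\nU') = \onN(\nU) \setminus \{v\}$ are immediate from the definitions, since both constructions are entirely determined by the intersection data of the remaining arcs. On the other hand, Lemma~\ref{lem:dominatedClique} guarantees that if $v$ is dominated in $\nN(\nU)$ then it is also dominated in $\onN(\nU) = \cl(\nN(\nU)^{(1)})$, so the same deletion preserves the homotopy type of both complexes. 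The process terminates because $|\nU|$ strictly decreases, and at termination $\nN(\nU^*)$ has no dominated vertex, i.e.\ it is minimal.

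By Proposition~\ref{prop:minimalArc} one then obtains $\nN(\nU^*) \cong \cnk{n}{k}$ for some $n \geq 1$ and $0 \leq k < n$, and consequently $\onN(\nU^*) = \cl(\cnk{n}{k}^{(1)}) = \vnk{n}{k}$. Composing the homotopy equivalences accumulated during the reduction gives $\nN(\nU) \simeq \cnk{n}{k}$ and $\onN(\nU) \simeq \vnk{n}{k}$, which is the main content of the theorem. The final sentence is then immediate from Theorem~\ref{thm:cnk} and Theorem~\ref{thm:cor6.7}, noting that the degenerate cases $k \geq n-1$ (or $k \geq \lfloor n/2\rfloor$ on the clique side) give a contractible complex.

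The one delicate point to check is the compatibility of the two reductions, that a vertex dominated in the (possibly small) nerve $\nN(\nU)$ remains dominated in the potentially much larger clique complex $\onN(\nU)$. This is exactly what Lemma~\ref{lem:dominatedClique} provides; without it one would have to run a separate collapse sequence on the clique complex and argue that it lands in a complex of the form $\vnk{n}{k}$. The rest of the argument is purely bookkeeping, since each elementary step is a strong collapse and since both $\nN(\nU \setminus \{U_v\}) = \nN(\nU) \setminus \{v\}$ and $\onN(\nU \setminus \{U_v\}) = \onN(\nU) \setminus \{v\}$ hold tautologically.
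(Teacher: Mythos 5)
Your proof is correct and matches the paper's own proof essentially line for line: both remove dominated vertices from $\nN(\nU)$ until the nerve is minimal, invoke Proposition~\ref{prop:minimalArc} to identify the resulting nerve as some $\cnk{n}{k}$, and use Lemma~\ref{lem:dominatedClique} to carry the same deletion sequence over to $\onN(\nU)$, landing in $\vnk{n}{k}$. The only difference is that you spell out more explicitly the tautological identities $\nN(\nU\setminus\{U_v\}) = \nN(\nU)\setminus\{v\}$ and $\onN(\nU\setminus\{U_v\}) = \onN(\nU)\setminus\{v\}$ and the termination by cardinality, which the paper leaves implicit.
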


\begin{proof}
We sequentially remove dominated vertices from $\nN(\nU)$ until we obtain a subcollection $\nU'$ such that $\nN(\nU')$ is minimal. By Proposition~\ref{prop:minimalArc}, there exists some $0 \le k < n$ with $\nN(\nU) \simeq \nN(\nU') \cong \cnk{n}{k}$. 

By Lemma~\ref{lem:dominatedClique} the sequence of dominated vertices for $\nN(\nU)$ is also a sequence of dominated vertices for $\onN(\nU)$, hence $\onN(\nU)\simeq \onN(\nU')$. This implies
\[ \onN(\nU) \simeq \onN(\nU') = \cl\bigl(\nN(\nU')^{(1)}\bigr) \cong \cl\bigl(\cnk{n}{k}^{(1)}\bigr) = \vnk{n}{k}. \]
\end{proof}

\begin{remark}
The sequence of reductions from $\onN(\nU)$ to $\vnk{n}{k}$ was also obtained by Golumbic \& Hammer \cite{GolumbicHammer1988} in the context of circular arc graphs.
\end{remark}

The following corollary is a special case of Theorem~\ref{thm:arbitraryNerve} when all arcs have the same length.

\begin{corollary}\label{cor:arbitraryVR_Cech}
If $X \subset S^1$ is nonempty and finite and $0\leq r<\frac12$, then the ambient \u Cech complex $\cech(X,S^1;r)$ and the Vietoris--Rips complex $\vr(X,r)$ have the homotopy type of a point, an odd-di\-mensional sphere, or a wedge sum of spheres of the same even dimension.
\end{corollary}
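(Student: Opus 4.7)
The plan is to observe that Corollary \ref{cor:arbitraryVR_Cech} is essentially a direct specialization of Theorem \ref{thm:arbitraryNerve}, so the work reduces to identifying both the ambient \u Cech complex and the Vietoris--Rips complex as the nerve or clique complex of a finite collection of circular arcs.

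First I would recall from the Preliminaries that $\cech(X, S^1; r) \cong \nN(\nU(X,r))$ by definition, where $\nU(X,r) = \{B(x,r)~|~x \in X\}$. The key observation is that for $0 \le r < \tfrac{1}{2}$, each closed ball $B(x,r) \subset S^1$ is a closed circular arc (namely $[x-r,x+r]_{S^1}$), since the diameter of $S^1$ under the chosen metric is $\tfrac{1}{2}$ and so $r < \tfrac{1}{2}$ prevents the ball from being the entire circle. Hence $\nU(X,r)$ is a finite nonempty collection of closed arcs in $S^1$, and applying Theorem~\ref{thm:arbitraryNerve} yields the claimed homotopy type for $\cech(X,S^1;r)$.

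Next I would handle the Vietoris--Rips complex. Since $S^1$ is a geodesic space, the preliminaries give $\vr(X,r) \cong \onN(\nU(X,r/2))$. For $0 \le r < \tfrac{1}{2}$, we have $r/2 < \tfrac{1}{4} < \tfrac{1}{2}$, so once again $\nU(X,r/2)$ is a finite nonempty collection of closed arcs. Theorem~\ref{thm:arbitraryNerve} applied to this collection then provides integers $n \ge 1$, $k \ge 0$ with $\onN(\nU(X,r/2)) \simeq \vnk{n}{k}$, and the homotopy classification in Theorem~\ref{thm:cor6.7} gives the desired conclusion.

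There is no real obstacle here, since all the combinatorial work (the vertex-domination reductions to $\cnk{n}{k}$ and $\vnk{n}{k}$) has already been done in Theorem~\ref{thm:arbitraryNerve}. The only mild subtlety is making sure the bound $r<\tfrac{1}{2}$ is used correctly so that the balls are genuine arcs (and not the whole circle), which is why the corollary is stated with this hypothesis; if $r \ge \tfrac{1}{2}$ then every ball equals $S^1$ and both complexes degenerate to the full simplex on $X$, which is of course still contractible and hence fits the conclusion trivially.
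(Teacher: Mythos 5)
Your proof is correct and takes essentially the same route as the paper: identify $\cech(X,S^1;r) \cong \nN(\nU(X,r))$ and $\vr(X,r) \cong \onN(\nU(X,r/2))$ and invoke Theorem~\ref{thm:arbitraryNerve}. You also spell out the detail the paper leaves implicit --- that the hypothesis $r<\tfrac12$ guarantees each ball is a genuine arc rather than all of $S^1$, so that Theorem~\ref{thm:arbitraryNerve} applies --- which is a worthwhile clarification.
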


\begin{proof} Let $\nU(X,r)= \{B(x,r)~|~x\in X\}$. Then $\nN(\nU(X,r)) = \cech(X,S^1;r)$ and $\onN(\nU(X,r/2)) = \vr(X,r)$ and we apply Theorem~\ref{thm:arbitraryNerve}. 
\end{proof}

The first two authors \cite{AdamaszekAdams} use Corollary~\ref{cor:arbitraryVR_Cech} to show that the ambient \u Cech complex $\cech(S^1,S^1;r)$ and the Vietoris--Rips complex $\vr(S^1,r)$ built on the infinite vertex set $S^1$ obtain the homotopy types $S^1, S^3, S^5, S^7, \dots$ as $r$ increases.

We conclude with the observation that the removals of dominated vertices in $\nN(\nU)$ can be carried out efficiently.

\begin{theorem}
\label{thm:algorithm}
Given a collection $\nU$ of $n$ circular arcs, one can compute in time $O(n\log{n})$ a subcollection $\nU'\subset \nU$ such that $\nN(\nU')\cong \cnk{|\nU'|}{k}$ for some $k$ and the inclusion $\nN(\nU')\hookrightarrow\nN(\nU)$ is a homotopy equivalence.
\end{theorem}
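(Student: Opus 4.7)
My plan is to turn the inductive vertex-removal argument of Theorem~\ref{thm:arbitraryNerve} into an explicit algorithm, implementing each removal in $O(1)$ amortized time after an initial sort of the endpoints. First I would perturb the arcs so that all $2n$ endpoints are distinct without changing the combinatorial type of $\nN(\nU)$, and sort these endpoints cyclically on $S^1$; this step uses $\Theta(\log n)$ time per element and accounts for the entire $O(n\log n)$ budget, so everything else must run in linear time. In particular, if the endpoints are already given in cyclic order, the sort is unnecessary and the total time drops to $O(n)$, matching the parenthetical remark preceding the theorem.

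I would then store the sorted endpoints in a circular doubly-linked list, tagging each node with its type (opening or closing) and a pointer to the arc it belongs to. The three domination situations enumerated in Lemma~\ref{lem:dominatedArc} correspond to very local patterns in this list: case~(a) to a nesting $U_i \subseteq U_j$, case~(b) to two consecutive opening endpoints $a_i,a_j$ with no closing between them, and case~(c) to two consecutive closings. A standard cycle-aware stack scan identifies all nestings in linear time, and a single pass through the list records all same-type adjacencies. All these ``bad'' patterns are pushed onto a queue.

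Next I would run a removal loop: pop a bad adjacency, apply the corresponding case of Lemma~\ref{lem:dominatedArc} to identify a dominated arc $i$, unlink the two endpoints of arc $i$ from the linked list, and inspect the at most four newly created adjacencies to enqueue any that have become bad. Each arc is removed at most once and each iteration does $O(1)$ work, so the loop terminates in $O(n)$ steps. When the queue empties, no arc is nested and opening/closing endpoints alternate around $S^1$; this is exactly the configuration analyzed in the proof of Proposition~\ref{prop:minimalArc}, so the surviving subcollection $\nU'$ satisfies $\nN(\nU') \cong \cnk{|\nU'|}{k}$. Because every removed vertex was dominated at the moment of removal by Lemma~\ref{lem:dominatedArc}, the composition of the successive simplicial collapses is a homotopy equivalence $\nN(\nU') \hookrightarrow \nN(\nU)$, as required.

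The main obstacle is not topological but organizational: I need to confirm that domination is stable under subsequent removals, so that a bad adjacency identified early still signals a dominated vertex by the time it is popped from the queue, and that no bad adjacency created along the way is missed. Both properties follow from the observation that the three cases of Lemma~\ref{lem:dominatedArc} depend only on a constant-size window of neighboring endpoints in the cyclic list, and hence are unaffected by removals happening elsewhere. With this bookkeeping in place, correctness is immediate and the runtime analysis reduces to the charging argument above.
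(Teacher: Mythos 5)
Your algorithm is essentially the paper's: sort the $2n$ endpoints into a cyclic list in $O(n\log n)$, then iteratively remove dominated arcs in $O(n)$ total using the patterns of Lemma~\ref{lem:dominatedArc}, terminating in the alternating configuration analyzed in Proposition~\ref{prop:minimalArc}. The paper eliminates nestings in a separate sweep-line pass and then runs a queue that tracks only consecutive-opening adjacencies (case~(b)) --- relying on the fact that the numbers of consecutive-opening and consecutive-closing pairs are automatically equal --- whereas you place all three domination cases into a single queue; this is a harmless variant. Your blanket claim that all three cases of Lemma~\ref{lem:dominatedArc} are ``constant-size window'' conditions is imprecise: a nesting in case~(a) can span the whole list, and cases~(b) and~(c) also require $U_i\preceq U_j$ (resp.\ $U_j\preceq U_i$), whose defining endpoints $b_i,b_j$ (resp.\ $a_i,a_j$) lie far from the adjacency being inspected; nonetheless the conclusions you draw from it do hold, since removals never create a new nesting, and once no nestings remain a consecutive same-type adjacency automatically satisfies the relevant $\preceq$ relation.
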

\begin{proof}
Without loss of generality we may restrict to the case when $\nU = \{U_i = [a_i,b_i]_{S^1}~|~i=1, \dots, n\}$ is a collection of closed arcs with all endpoints distinct. Let $L$ be a cyclic list of the $2n$ points $a_i,b_i$ in the clockwise cyclic order. The intervals $U_i$ such that $U_i\subset U_j$ for some $j\neq i$ can now be eliminated in $O(n)$ time using a standard sweep line algorithm, maintaining at each point a list of active intervals ordered by their starting points (two sweeps around the circle are sufficient to detect all inclusions). We can therefore assume that $U_i\not\subset U_j$ for $i\neq j$.

Initialize $S$ as the set of all starting points $a_i$ such that the immediate successor of $a_i$ in $L$ is some other starting point $a_j$. As long as $S\neq\emptyset$ we repeat the following: remove a point $a_i$ from $S$, delete $a_i$ and the corresponding $b_i$ from $L$ and, if the two neighbours of $b_i$ in $L$ were $a_j\prec b_i\prec a_{j'}$ (necessarily $j,j'\neq i$), we add $a_j$ to $S$. Each starting point is added to $S$ at most once, so the procedure takes $O(n)$ steps. After termination, $L$ is an alternating list of starting and ending points  for some subcollection $\nU'\subset\nU$. Since $\nU'$ does not contain nested intervals, we conclude that $\nN(\nU')\cong \cnk{|\nU'|}{k}$ as in the proof of Proposition~\ref{prop:minimalArc}. The homotopy equivalence is a consequence of Lemma~\ref{lem:dominatedArc}.
\end{proof}

\begin{remark}
The running time of the algorithm is dominated by sorting, i.e. computing the list $L$. The remaining operations take only $O(n)$ time.
\end{remark}

\begin{corollary}
Given a set $X\subset S^1$ of cardinality $n$ and $0\leq r<\frac12$, the homotopy type of the complexes $\cech(X,S^1;r)$ and $\vr(X,r)$ can be determined in time $O(n\log{n})$, or in time $O(n)$ if $X$ is given in cyclic order.
\end{corollary}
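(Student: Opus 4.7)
The plan is to combine the previously established reduction algorithm with the closed-form classifications of the homotopy types of $\cnk{n}{k}$ and $\vnk{n}{k}$.

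First I would observe that $\cech(X,S^1;r) \cong \nN(\nU(X,r))$ and $\vr(X,r) \cong \onN(\nU(X,r/2))$, where $\nU(X,r) = \{B(x,r)~|~x \in X\}$ is a finite collection of $n$ closed circular arcs (each of length at most $2r < 1$). Applying Theorem~\ref{thm:algorithm} to this collection produces, in time $O(n\log n)$, a subcollection $\nU'$ with $\nN(\nU') \cong \cnk{n'}{k}$ where $n' = |\nU'|$, and such that the inclusion $\nN(\nU') \hookrightarrow \nN(\nU(X,r))$ is a homotopy equivalence. By Lemma~\ref{lem:dominatedClique} the same sequence of vertex removals realizes $\onN(\nU(X,r/2)) \simeq \onN(\nU') \cong \vnk{n'}{k}$ (with the arcs of half the radius producing the appropriate $k$, which we read off in the same pass).

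Next I would invoke the classification theorems: Theorem~\ref{thm:cnk} gives the homotopy type of $\cnk{n'}{k}$ from the pair $(n',k)$ in constant time (once we compare $\frac{k}{n'}$ to the nearest fraction $\frac{l}{l+1}$), and Theorem~\ref{thm:cor6.7} does the same for $\vnk{n'}{k}$. Since the arithmetic involved --- computing $l = \lfloor \frac{k}{n'-k} \rfloor$ for the $\cnk{\cdot}{\cdot}$ case and $l = \lfloor \frac{k}{n'-2k} \rfloor$ for the $\vnk{\cdot}{\cdot}$ case, together with a check for equality and a count of summands --- takes only $O(1)$ time, the total running time is dominated by the reduction step, namely $O(n\log n)$.

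Finally, for the second assertion I would note, as in the remark following Theorem~\ref{thm:algorithm}, that the $O(n\log n)$ cost comes entirely from sorting the $2n$ endpoints into the cyclic list $L$. If $X$ is already given in cyclic order, then $L$ can be assembled in $O(n)$ time (since the endpoints $x \pm r$ of balls centered at cyclically ordered points are themselves easily interleaved in linear time), after which the sweep-line elimination of nested arcs and the elimination of consecutive starting points both run in $O(n)$. The constant-time lookup of the homotopy type via Theorems~\ref{thm:cnk} and \ref{thm:cor6.7} then yields the $O(n)$ bound. No step here is a genuine obstacle: the work has been done in Theorem~\ref{thm:algorithm} and the classification theorems, so the corollary is essentially a bookkeeping exercise that assembles those ingredients.
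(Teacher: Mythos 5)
Your overall strategy is the right one (apply Theorem~\ref{thm:algorithm} and then look up the homotopy type via Theorem~\ref{thm:cnk} or Theorem~\ref{thm:cor6.7}), but there is a concrete error in the step handling the Vietoris--Rips complex. You apply the reduction algorithm to the collection $\nU(X,r)$ (balls of radius $r$) to obtain $\nU'\subset\nU(X,r)$ with $\nN(\nU')\cong\cnk{n'}{k}$, and then assert that Lemma~\ref{lem:dominatedClique} lets the \emph{same} sequence of vertex removals produce $\onN(\nU(X,r/2))\simeq\onN(\nU')\cong\vnk{n'}{k}$. This does not follow. Lemma~\ref{lem:dominatedClique} says that a vertex dominated in a complex $K$ is also dominated in $\cl(K^{(1)})$ --- that is, the removal sequence for $\nN(\nU)$ carries over to $\onN(\nU)$ \emph{for the same underlying collection} $\nU$. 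Here $\cech(X,S^1;r)=\nN(\nU(X,r))$ and $\vr(X,r)=\onN(\nU(X,r/2))$ are built from \emph{different} collections of arcs: radius $r$ and radius $r/2$, respectively. Domination in $\nN(\nU(X,r))$ gives no information about domination in $\nN(\nU(X,r/2))$ or $\onN(\nU(X,r/2))$, and indeed the two nerves can have quite different reduction sequences (e.g.\ one may already be a simplex while the other is a nontrivial sphere). Moreover $\onN(\nU')$ for $\nU'\subset\nU(X,r)$ is isomorphic to $\vr(X',2r)$, not $\vr(X,r)$, so the homotopy equivalence you claim does not even have the right target.

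The fix is simple and preserves your complexity analysis: run Theorem~\ref{thm:algorithm} \emph{twice}, once on $\nU(X,r)$ (for the \u Cech complex) and once on $\nU(X,r/2)$ (for the Vietoris--Rips complex). The second run produces $\nU''\subset\nU(X,r/2)$ with $\nN(\nU'')\cong\cnk{n''}{k''}$; Lemma~\ref{lem:dominatedClique} now applies legitimately, since $\nU''$ and $\nU(X,r/2)$ are the \emph{same} kind of object, to give $\vr(X,r)=\onN(\nU(X,r/2))\simeq\onN(\nU'')\cong\vnk{n''}{k''}$, after which Theorem~\ref{thm:cor6.7} identifies the homotopy type. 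Two $O(n\log n)$ passes is still $O(n\log n)$, and your observation that the sort is the bottleneck --- hence $O(n)$ when $X$ is pre-sorted --- remains valid. With this correction your proof matches the one the paper implicitly intends.
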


\subsection*{Acknowledgements}

We would like to thank Anton Dochtermann for encouraging us to consider the connection to the Lov\'{a}sz bound in Section~\ref{sec:Lovasz}, and we would like to thank Arnau Padrol and Yuliy Baryshnikov for helpful conversations about cyclic polytopes.

\bibliographystyle{plain}
\bibliography{NerveComplexesOfCircularArcs}

\end{document}